\title{Stability in affine logic}
\author{Itaï Ben Yaacov}
\address{
  Universit\'e Claude Bernard Lyon 1 \\
  Institut Camille Jordan \\
  Lyon \\
  France}
\urladdr{\url{https://math.univ-lyon1.fr/~begnac/}}
\author{Tomás Ibarlucía}
\address{Universit\'e Paris Cit\'e \\
  CNRS \\
  IMJ-PRG \\
  F-75006 Paris \\
  France.}
\urladdr{\url{https://webusers.imj-prg.fr/~tomas.ibarlucia}}
\newcommand{\Rand}{\mathrm{R}}
\newcommand{\Bau}{\mathrm{Bau}}
\newcommand{\Mor}{\mathrm{Mor}}
\newcommand{\ext}{\mathrm{ext}}
\newcommand{\crc}{\mathrm{cr}}
\newcommand{\PMP}{\mathrm{PMP}}
\newcommand{\FPMP}{\mathrm{FPMP}}
\numberwithin{equation}{section}
\begin{document}

\begin{abstract}
  We develop foundational aspects of stability theory in affine logic.
  On the one hand, we prove appropriate affine versions of many classical results, including definability of types, existence of non-forking extensions, and other fundamental properties of forking calculus.
  Most notably, stationarity holds over arbitrary sets (in fact, every type is Lascar strong).
  On the other hand, we prove that stability is preserved under direct integrals of measurable fields of structures.
  We deduce that stability in the extremal models of an affine theory implies stability of the theory.
  We also deduce that the affine part of a stable continuous logic theory is affinely stable, generalising the result of preservation of stability under randomisations.
\end{abstract}

\maketitle

\tableofcontents

\section*{Introduction}

First-order \emph{affine logic} can be defined as the fragment of first-order continuous logic obtained by restricting the connectives to affine functions.
This was originally introduced by Bagheri as \emph{linear logic} in a series of papers \cite{Bagheri2010, Bagheri2014,BagheriSafari2014,Bagheri2021}, and was more recently the subject of a comprehensive study by Tsankov and the authors in \cite{BenYaacov-Ibarlucia-Tsankov:AffineLogic}.
The present paper is a continuation of the latter.
In particular, we shall follow its terminology and notation.

In \cite{BenYaacov-Ibarlucia-Tsankov:AffineLogic} we were concerned with the general foundational aspects of affine logic.
These being laid down, we initiate here the study of model-theoretic stability in this setting.
We start with the consideration of \emph{local stability}, namely stability of a single formula, inside a single structure.
In the context of continuous logic, local stability was introduced and studied, adapting classical model-theoretic techniques, in \cite{BenYaacov-Usvyatsov:CFO}.
However, the underlying definition goes back as far as Krivine and Maurey \cite{Krivine-Maurey:EspacesDeBanachStables}, or even Grothendieck \cite{Grothendieck:CriteresDeCompacite}.
This last observation was taken up in \cite{BenYaacov:Grothendieck}, where the goal was to develop the basic properties of stability (in particular, the definability of types) as quickly as possible using tools of functional analysis and Banach space theory.

In addition to its fundamental place in modern model theory, local stability is at the heart of many recent applications of the field to other areas.
Let us mention, for instance, \cite{Hrushovski:ApproximateSubgroups, BenYaacov-Tsankov:WAP, IbarluciaTsankov:StronglyErgodic, Ibarlucia:InfiniteDimensional, ChevalierHrushovski, ChavarriaConantPillay}.

Since the definitions of stability in a structure for a single affine or continuous formula are identical, and in fact have nothing to do with logic, we opted here for an abstract functional-analytic approach for the basic setup.
This is done in \autoref{sec:DoubleLimit}.
For the benefit of readers who are not model theorists, we present it in a manner that avoids any model-theoretic notions (local type spaces are introduced through the back door, as $\overline{\varphi_A}$, $\cco(\varphi_A)$, and their counterparts for $B$).
For the benefit of readers who are model theorists, and in contrast with \cite{BenYaacov:Grothendieck}, we keep this presentation as elementary and as self-contained as possible.
In particular, we make no direct use of Grothendieck \cite{Grothendieck:CriteresDeCompacite}, and replace the Eberlein--Šmulian Theorem with a Downward Löwenheim--Skolem argument in \autoref{lem:MeanDefinableLS}.

In \autoref{sec:StabilityInStructure} we translate the contents of \autoref{sec:DoubleLimit} to local stability for an affine formula in a structure.
Most importantly, in \autoref{thm:DoubleLimitFormula}, which translates \autoref{thm:DoubleLimitTopological}, we show that for an affine formula $\varphi(x,y)$ and a structure $M$, the following are equivalent:
\begin{enumerate}
\item The formula $\varphi$ is stable in $M$.
\item All affine $\varphi$-types $p$ and $\varphi^\op$-types $q$ over $M$ are \emph{affinely definable}, and moreover $dp(q) = dq(p)$.
\item All affine $\varphi$-types and $\varphi^\op$-types over $M$ are \emph{mean-definable} (in which case the additional symmetry property holds automatically).
\end{enumerate}
In fact, the same is proved for types and formulas in continuous logic as well, given that the topological contents is (almost) identical and was covered in \autoref{sec:DoubleLimit}.

\autoref{sec:DirectIntegrals} is dedicated to showing that stability in a structure is preserved under direct integrals (\autoref{thm:stability-direct-integral}).
The direct integral of measurable fields of structures is, in a sense, the fundamental construction of affine logic, distinguishing it from continuous logic, and plays a key role throughout \cite{BenYaacov-Ibarlucia-Tsankov:AffineLogic}.
While the direct integral is conceptually related to the ultraproduct construction, it is quite complementary to it, and preservation under direct integrals should be contrasted with the fact that stability in a structure is \emph{not} preserved under ultraproducts.

In \autoref{sec:StabilityInTheory} we consider stability of an affine formula $\varphi(x,y)$ in an affine theory $T$, i.e., in all of its models simultaneously.
Under this hypothesis we may use ultraproducts, and thus improve earlier results.
For example, \autoref{thm:StableMeanDefinableUniformSimultaneous} is a \emph{uniform} mean-definability result for types.
We prove the equivalence of stability in a theory with several other conditions in \autoref{thm:stable-in-T-equivalences}.
Some are ``standard'', such as definability of and counting (affine) types, and some are specific to affine logic.
Most importantly, we use \autoref{thm:stability-direct-integral} to show that for $\varphi$ to be stable in $T$, it is enough for $\varphi$ to be stable in the extremal models of $T$.
This latter equivalence has several nice corollaries.
In particular, it implies that if an affine formula is stable in a continuous theory $\bT$, then it is also stable in its affine part $\bT_\aff$.
We leave open the question of finding a more direct argument proving this.

In \autoref{sec:NonForking} we define \emph{non-forking extensions} of types in a stable formula, or in several such, first to a model and then between arbitrary sets.
Unlike the situation in classical or continuous logic, the non-forking extension is always unique, that is to say that types over arbitrary sets are stationary.
This notion of non-forking extensions allows us to define in \autoref{sec:Independence} a notion of independence relative to all stable formulas in a theory.
As in continuous logic, if the theory is affinely stable, then this notion satisfies all the usual properties of a ``stable notion of independence'', and conversely, if such a notion exists, then it must agree with affine stable independence (and the theory is affinely stable).

We conclude with \autoref{sec:LascarType}, which, strictly speaking, has nothing to do with stability, showing that types over arbitrary sets, in the sense of affine logic, coincide with Lascar types.
This explains, in a sense, the stationarity phenomenon observed in \autoref{sec:NonForking}.
We deduce the same for theories in continuous logic that are of the form $T_\crc$, namely, an affine theory $T$ together with the \emph{convex realisation} axiom scheme.
This generalises (and slightly improves) a theorem from \cite{BenYaacov:RandomVariables}, asserting that in a randomisation of a continuous theories, types over sets are Lascar strong.

\section{The double limit property and mean-definability}
\label{sec:DoubleLimit}

We recall that if $E$ is a locally convex Hausdorff topological vector space and $X \subseteq E$ is compact and convex, then $X$, equipped with the topology and the structure of convex combinations, is a \emph{compact convex set}.
If $A \subseteq X$, then $\co(A)$ is the collection of convex combinations of points in $A$, and $\cco(A)$ is its closure.
Alternatively, by the Hahn-Banach Theorem, $\cco(A)$ coincides with the intersection of all closed half-spaces $\bigl\{x : \lambda(x) \leq \alpha \bigr\}$, where $\lambda$ is a continuous affine real-valued function on $X$ (or on $E$) and $\lambda\rest_A \leq \alpha$.
In the present section we are mostly interested in the following archetypical examples.

\begin{exm}
  \label{exm:PowerR}
  Given a set $B$, we equip the space $E = \bR^B$ with the product topology.
  Compact convex subsets of $E$ will be mostly obtained as closed convex subsets of $\bI^B \subseteq E$, for some compact interval $\bI \subseteq \bR$.
  The topological dual $E^*$ consists of all $\lambda(x) = \sum_i \lambda_i x_i$, with all but finitely many coefficients vanishing.
\end{exm}

\begin{exm}
  \label{exm:CX}
  When $X$ is a compact Hausdorff space, we denote by $C(X)$ the Banach space of continuous real-valued functions on $X$, equipped with the supremum norm.
  By the Riesz Representation Theorem, $C(X)^*$ is the space of signed regular Borel measures on $X$.
  An important convex subset of the latter is $\cM(X)$, the space of regular Borel probability measures.
  Equipping $C(X)^*$ with the weak$^*$ topology, $\cM(X)$ is a compact convex set, whose set of extreme points is homeomorphic to $X$ (identifying $x \in X$ with the corresponding Dirac measure).
\end{exm}

\begin{exm}
  \label{exm:AX}
  When $X$ is a compact convex set, we denote by $\cA(X) \subseteq C(X)$ the Banach subspace of continuous affine real-valued functions on $X$.
  Let $\pi\colon C(X)^* \rightarrow \cA(X)^*$ denote the corresponding quotient map, namely, the restriction of (evaluations of) measures to affine functions.

  For $x \in X$, define $\hat{x} \in \cA(X)^*$ by $\hat{x}(f) = f(x)$.
  The map $x \mapsto \hat{x}$ is an affine topological embedding of $X$ in $E = \cA(X)^*$.
  If $\mu \in \cM(X)$ and $x = R(\mu) \in X$ is its barycentre, then $\pi(\mu) = \hat{x}$.
  Therefore, since $\cM(X)$ generates $C(X)^*$, the set $\{\hat{x} : x \in X\}$ generates $\cA(X)^*$.
\end{exm}

Throughout, we fix a compact interval $\bI \subseteq \bR$.

\begin{lem}
  \label{lem:SeparatelyContinuousExtensionToM}
  Let $X$ be a compact Hausdorff space, $Y$ a metrisable space, and $\varphi\colon X \times Y \rightarrow \bI$ a separately continuous function.
  Then, under the identification $X \subseteq \cM(X)$, $\varphi$ admits a unique extension to a separately continuous function $\cM(X) \times Y \rightarrow \bI$, affine in the first argument.
  If $Y$ is convex and $\varphi$ is affine in the second argument, then so is its extension.
\end{lem}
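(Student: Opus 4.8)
The plan is to define the extension by integration: for $\mu \in \cM(X)$ and $y \in Y$, set
\[
  \tilde\varphi(\mu, y) = \int_X \varphi(x, y) \, d\mu(x).
\]
First I would check that this is well-defined with the correct range. For each fixed $y$, the function $\varphi(\cdot, y)$ is continuous on $X$ by separate continuity, hence $\mu$-integrable; and since it takes values in the compact interval $\bI$ while $\mu$ is a probability measure, the integral again lies in $\bI$. The extension property is immediate: under the identification of $x \in X$ with the Dirac measure $\delta_x$, one has $\tilde\varphi(\delta_x, y) = \varphi(x, y)$. Continuity and affineness in the first argument are also essentially free: for fixed $y$, the map $\mu \mapsto \int_X \varphi(x, y)\, d\mu$ is weak$^*$ continuous by the very definition of the weak$^*$ topology (as $\varphi(\cdot, y) \in C(X)$), and it is affine because integration against a fixed integrand is linear in the measure.

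The main point is continuity in the second argument, and this is where metrisability of $Y$ enters. Since $Y$ is metrisable, it suffices to verify sequential continuity. So fix $\mu$ and let $y_n \to y$ in $Y$. By separate continuity, $\varphi(x, y_n) \to \varphi(x, y)$ for every $x \in X$, and all these functions are uniformly bounded by the endpoints of $\bI$. The Dominated Convergence Theorem then gives $\int_X \varphi(x, y_n)\, d\mu \to \int_X \varphi(x, y)\, d\mu$, that is $\tilde\varphi(\mu, y_n) \to \tilde\varphi(\mu, y)$, as required. Together with the previous paragraph this establishes separate continuity.

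For uniqueness, I would use that convex combinations of Dirac measures are weak$^*$ dense in $\cM(X)$ (by Krein--Milman, noting $\ext \cM(X) \cong X$, or directly). Given any separately continuous extension affine in the first argument and agreeing with $\varphi$ on $X \times Y$, for each fixed $y$ it is a weak$^*$ continuous affine function on $\cM(X)$ whose values on the Diracs $\delta_x$ are prescribed to equal $\varphi(x, y)$; by affineness it is then determined on convex combinations of Diracs, and by continuity on their closure $\cM(X)$. Hence any two such extensions coincide. Finally, if $Y$ is convex and $\varphi$ is affine in the second argument, then for $y = \lambda y_1 + (1 - \lambda) y_2$ the integrand splits as $\lambda \varphi(x, y_1) + (1 - \lambda)\varphi(x, y_2)$, and linearity of the integral yields $\tilde\varphi(\mu, y) = \lambda \tilde\varphi(\mu, y_1) + (1 - \lambda)\tilde\varphi(\mu, y_2)$, so the extension is affine in the second argument as well. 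The only delicate step is the dominated-convergence argument for continuity in $y$; everything else is formal.
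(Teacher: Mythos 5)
Your proposal is correct and follows essentially the same route as the paper: define the extension by $\tilde\varphi(\mu,y) = \int_X \varphi(x,y)\,d\mu(x)$, get continuity and affineness in $\mu$ from the definition of the weak$^*$ topology, and get continuity in $y$ from metrisability plus the Dominated Convergence Theorem. The paper's proof is simply a terser version of yours (it dismisses uniqueness and preservation of affineness as clear, where you spell out the density-of-Dirac-combinations argument), so there is nothing to add.
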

\begin{proof}
  Uniqueness is clear.
  For existence, consider the function $(\mu,y) \mapsto \int \varphi(x,y) \, d\mu(x)$.
  This is affine in $\mu$, and continuous in $\mu$ by definition of the topology on $\cM(X)$.
  Since $\varphi$ is bounded and $Y$ is metrisable, by the Dominated Convergence Theorem this function is also continuous in $y$.
  Preservation of affineness on $Y$ is clear.
\end{proof}

\begin{lem}
  \label{lem:SeparatelyContinuousSeparatelyAffineWeakTopology}
  Let $X$ and $Y$ be compact convex sets, and let $\varphi\colon X \times Y \rightarrow \bI$ be separately continuous and separately affine.
  For $x \in X$ and $y \in Y$, define $\varphi_x(y) = \varphi(x,y)$.
  Then $\varphi_x \in \cA(Y)$, and the map $x \mapsto \varphi_x$ is affine and continuous in the weak topology on $\cA(Y)$.
\end{lem}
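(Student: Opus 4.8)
The plan is to check the three assertions in turn; the first two are immediate from the separate affineness and separate continuity of $\varphi$, while the third rests on the description of $\cA(Y)^*$ furnished by \autoref{exm:AX}. For fixed $x \in X$, the function $\varphi_x = \varphi(x,\cdot)$ takes values in $\bI$, is continuous on $Y$ by separate continuity of $\varphi$, and is affine by separate affineness; hence $\varphi_x \in \cA(Y)$. For affineness of the assignment $x \mapsto \varphi_x$, note that for a convex combination $x = \sum_i \lambda_i x_i$ in $X$ and every $y \in Y$ we have $\varphi_x(y) = \varphi(x,y) = \sum_i \lambda_i \varphi(x_i,y) = \sum_i \lambda_i \varphi_{x_i}(y)$, again by separate affineness in the first argument, so that $\varphi_x = \sum_i \lambda_i \varphi_{x_i}$ as elements of $\cA(Y)$.

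For the continuity, I would use the fact that a map into a space equipped with its weak topology is continuous if and only if its composition with each element of the dual is continuous. Thus it suffices to show that $x \mapsto \Lambda(\varphi_x)$ is continuous on $X$ for every $\Lambda \in \cA(Y)^*$. For an evaluation functional $\hat{y}$, where $\hat{y}(f) = f(y)$, this map is $x \mapsto \varphi(x,y)$, which is continuous precisely by separate continuity of $\varphi$ in its first argument. Since by \autoref{exm:AX} the evaluations $\{\hat{y} : y \in Y\}$ generate $\cA(Y)^*$, it follows for every finite linear combination $\Lambda = \sum_i c_i \hat{y}_i$ that $x \mapsto \Lambda(\varphi_x) = \sum_i c_i \varphi(x,y_i)$ is continuous, as a finite sum of continuous functions.

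The one point deserving care, which I regard as the only real obstacle, is the passage from the linear span of the evaluations to an arbitrary functional $\Lambda \in \cA(Y)^*$, should ``generate'' be understood as density of that span rather than equality with the whole dual. This is dispatched using the uniform bound $\|\varphi_x\|_\infty \leq \sup_{t \in \bI} |t|$, valid because $\varphi$ takes values in $\bI$: given a net $x_\alpha \to x$ in $X$ and $\varepsilon > 0$, one chooses $\Lambda'$ in the span of evaluations with $\|\Lambda - \Lambda'\|$ small and estimates $|\Lambda(\varphi_{x_\alpha}) - \Lambda(\varphi_x)|$ by the standard three-term bound through $\Lambda'$, the outer two terms being controlled by $\|\Lambda - \Lambda'\|$ times the uniform bound and the middle term tending to $0$ by the case already treated. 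This yields continuity of $x \mapsto \Lambda(\varphi_x)$ for all $\Lambda$, and hence weak continuity of $x \mapsto \varphi_x$.
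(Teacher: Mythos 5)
Your proof is correct and follows the same route as the paper's: verify membership in $\cA(Y)$ and affineness directly, then test weak continuity against the evaluation functionals $\hat{y}$, which generate $\cA(Y)^*$. The extra care you take about density versus equality of the span is not actually needed, since by the barycentre argument of \autoref{exm:AX} the linear span of $\{\hat{y} : y \in Y\}$ is all of $\cA(Y)^*$ (every functional comes from a signed measure, hence from a linear combination of probability measures, hence of evaluations at barycentres) — but your fallback via the uniform bound $\|\varphi_x\|_\infty \leq \sup_{t\in\bI}|t|$ is correct in any case.
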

\begin{proof}
  It is clear that $\varphi_x \in \cA(Y)$ and that $x \mapsto \varphi_x$ is affine.
  If $y \in Y$, then $x \mapsto \varphi(x,y) = \hat{y}(\varphi_x)$ is continuous in $x$.
  Since $\{\hat{y} : y \in Y\}$ generates $\cA(Y)^*$, the map $x \mapsto \varphi_x$ is continuous in the weak topology on $\cA(Y)$.
\end{proof}

\begin{lem}
  \label{lem:Mazur}
  In a Banach space, the weak closure of a convex set agrees with its norm closure.
\end{lem}
\begin{proof}
  Immediate from the Hahn-Banach Separation Theorem.
\end{proof}

In what follows, $A$ and $B$ are non-empty sets and $\varphi\colon A \times B \rightarrow \bI$ a function.
As in \autoref{exm:PowerR}, we equip $\bI^B$ with the product topology, namely the topology of pointwise convergence.
For $a \in A$ we define $\varphi_a \in \bI^B$ by $\varphi_a(b) = \varphi(a,b)$, and $\varphi_A = \{\varphi_a : a \in A\} \subseteq \bI^B$.
We have a natural map
\begin{gather*}
\pi_A\colon A \rightarrow \varphi_A \subseteq \overline{\varphi_A} \subseteq \cco(\varphi_A) \subseteq \bI^B,
\end{gather*}
sending $a \mapsto \varphi_a$.
Similarly for $\varphi^b(a) = \varphi(a,b)$ and $\pi_B\colon B \rightarrow \varphi^B \subseteq \overline{\varphi^B} \subseteq \cco(\varphi^B) \subseteq \bI^A$.

At times we may also consider the metric of uniform convergence on $\bI^B$, for which we use norm notation: $\|f - g\|_B = \sup_{b \in B} \, |f(b) - g(b)|$.

\begin{exm}
  If $M$ is an $\cL$-structure, and $\varphi(x,y)$ an $\cL$-formula, in continuous or affine logic, then we may take $A = M^x$, $B = M^y$, and by a common abuse of notation, identify $\varphi$ with its interpretation in $M$.

  Then we may identify $\overline{\varphi_A}$ (respectively, $\cco(\varphi_A)$) with the space of continuous (respectively, affine) $\varphi$-types over $M$ (see \autoref{sec:StabilityInStructure}).
\end{exm}

\begin{dfn}
  \label{dfn:MeanDefinable}
  Let $A$ and $B$ be non-empty sets, and $\varphi\colon A \times B \rightarrow \bI$.
  Say that $p \in \bI^B$ is \emph{$\varphi$-mean-definable} if it belongs to the uniform closure of $\co(\varphi_A)$, i.e., if for every $\varepsilon > 0$, there exists a finite sequence $(a_i : i < n) \subseteq A$ such that
  \begin{gather*}
    \left\| p - \frac{1}{n} \sum_{i<n} \varphi_{a_i} \right\|_B
    = \sup_{b \in B} \ \left| p(b) - \frac{1}{n} \sum_{i<n} \varphi(a_i,b) \right|
    \leq \varepsilon.
  \end{gather*}
\end{dfn}

\begin{lem}
  \label{lem:MeanDefinableLS}
  Let $A$ and $B$ be non-empty sets, and $\varphi\colon A \times B \rightarrow \bI$.
  Let $p \in \bI^B$.
  Then there exist countable non-empty subsets $C \subseteq A$ and $D \subseteq B$ such that if $\eta\colon A \rightarrow \bI$ is a linear combination of functions $\varphi^d$, $d \in D$, and $\theta\colon B \rightarrow \bI$ is a convex combination of functions $\varphi_c$, $c \in C$, both with rational coefficients, then
  \begin{gather}
    \label{eq:MeanDefinableLS}
    \sup_A \eta = \sup_C \eta,
    \qquad
    \|p-\theta\|_B = \|p-\theta\|_D.
  \end{gather}
  Letting $q = p\rest_D \in \bI^D$ and $\psi = \varphi\rest_{C \times D} \colon C \times D \rightarrow \bI$, this implies that
  \begin{itemize}
  \item If $p \in \cco(\varphi_A)$, then $q \in \cco(\psi_C)$.
  \item If $q$ is $\psi$-mean-definable, then $p$ is $\varphi$-mean-definable.
  \end{itemize}
\end{lem}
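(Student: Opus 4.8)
The plan is to run a Downward Löwenheim--Skolem / closure argument, constructing $C$ and $D$ as increasing unions of countable sets so that every supremum and every norm appearing in \eqref{eq:MeanDefinableLS} is eventually witnessed. The one genuine subtlety is that the witnesses wanted in $C$ depend on $D$ (through the functions $\varphi^d$), while the witnesses wanted in $D$ depend on $C$ (through the functions $\varphi_c$); so the two sets must be grown \emph{simultaneously}. Concretely, I would start from arbitrary non-empty finite $C_0 \subseteq A$ and $D_0 \subseteq B$, and define $C_{n+1} \supseteq C_n$, $D_{n+1} \supseteq D_n$ as follows. For every rational-coefficient linear combination $\eta$ of functions $\varphi^d$ with $d \in D_n$ and every integer $k \geq 1$, place into $C_{n+1}$ some $a \in A$ with $\eta(a) \geq \sup_A \eta - 1/k$; symmetrically, for every rational-coefficient convex combination $\theta$ of functions $\varphi_c$ with $c \in C_n$ and every $k \geq 1$, place into $D_{n+1}$ some $b \in B$ with $|p(b) - \theta(b)| \geq \|p - \theta\|_B - 1/k$. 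A linear (respectively, convex) combination has finite support and rational coefficients, so at each stage there are only countably many such $\eta$ (respectively, $\theta$); hence $C = \bigcup_n C_n$ and $D = \bigcup_n D_n$ are countable and non-empty.

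To verify \eqref{eq:MeanDefinableLS}, note that $\sup_C \eta \leq \sup_A \eta$ and $\|p - \theta\|_D \leq \|p - \theta\|_B$ hold trivially, since $C \subseteq A$ and $D \subseteq B$; only the reverse inequalities require work. Any $\eta$ as in the statement is supported on finitely many $d \in D$, all lying in some $D_n$, so the witnesses placed in $C_{n+1} \subseteq C$ give $\sup_C \eta \geq \sup_A \eta - 1/k$ for every $k$, whence $\sup_C \eta = \sup_A \eta$. The identity $\|p - \theta\|_B = \|p - \theta\|_D$ is obtained in exactly the same way from the witnesses placed in $D$.

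It remains to derive the two consequences. The mean-definability implication is immediate: a $\psi$-mean-definable approximation of $q$ is precisely an equal-weight (hence rational-coefficient) convex combination $\theta = \frac{1}{n}\sum_{i<n}\varphi_{c_i}$ with $c_i \in C$ satisfying $\|p - \theta\|_D \leq \varepsilon$, and the second identity of \eqref{eq:MeanDefinableLS} upgrades this to $\|p - \theta\|_B \leq \varepsilon$, which exhibits $p$ as $\varphi$-mean-definable. For the closed convex hull, recall (by Hahn--Banach, as recalled at the beginning of the section) that a closed convex subset of $\bI^B$ with the product topology is the intersection of the closed half-spaces $\bigl\{f : \sum_j \mu_j f(b_j) \leq \alpha\bigr\}$ containing it, and that it suffices to test functionals with rational coefficients, a real separating functional being perturbable to a rational one since everything lies in the bounded set $\bI^D$. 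So suppose $\sum_j \mu_j \varphi(c, d_j) \leq \alpha$ for all $c \in C$, with $d_j \in D$ and rational $\mu_j$. Setting $\eta(a) = \sum_j \mu_j \varphi(a, d_j)$, a rational linear combination of the $\varphi^{d_j}$, we have $\sup_C \eta \leq \alpha$, hence $\sup_A \eta \leq \alpha$ by the first identity of \eqref{eq:MeanDefinableLS}. Since $p \in \cco(\varphi_A)$ and the half-space $\bigl\{f : \sum_j \mu_j f(d_j) \leq \alpha\bigr\}$ then contains $\varphi_A$, it contains $p$, i.e.\ $\sum_j \mu_j p(d_j) \leq \alpha$; as this holds for all such rational functionals, $q = p\rest_D \in \cco(\psi_C)$.

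The main obstacle, and the only point requiring care, is organising the simultaneous recursion so that every combination occurring in \eqref{eq:MeanDefinableLS} is captured at a finite stage while keeping $C$ and $D$ countable; once that bookkeeping is in place, both consequences follow mechanically, the first from a functional-separation argument and the second by inspection.
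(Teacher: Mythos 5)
Your proposal is correct and follows essentially the same route as the paper: a simultaneous witness-adding recursion (the paper's ``restricted Downward Löwenheim--Skolem'' step) to obtain \eqref{eq:MeanDefinableLS}, the mean-definability transfer read off directly from the second identity, and the closed convex hull handled via Hahn--Banach separation by finitely supported rational-coefficient functionals, using the first identity to pass from $C$ to $A$. The paper states this last step contrapositively (a functional separating $q$ from $\psi_C$ extends to one separating $p$ from $\varphi_A$), but that is the same argument as yours.
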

\begin{proof}
  The first assertion is a restricted instance of the Downward Löwenheim--Skolem Theorem, proved by adding witnesses to $C$ and $D$ until \autoref{eq:MeanDefinableLS} holds.
  For the second, $\varphi$-mean-definability is fairly immediate.
  Finally, if $q \notin \cco(\psi_C)$, then there exists a continuous linear functional $\lambda\colon \bR^D \rightarrow \bR$ that separates $q$ from $\psi_C$.
  As in \autoref{exm:PowerR}, we may write $\lambda(x) = \sum_d \lambda_d x_d$, and assume that the $\lambda_d$ are rational.
  Considering the first equality of \autoref{eq:MeanDefinableLS} for $\eta = \sum_d \lambda_d \varphi^d$, we see that the natural extension of $\lambda$ to $\bI^B$ witnesses that $p \notin \cco(\varphi_A)$.
\end{proof}

\begin{dfn}
  \label{dfn:DoubleLimit}
  Let $A$ and $B$ be sets, and $\varphi\colon A \times B \rightarrow \bI$.
  We say that $\varphi$ has the \emph{double limit property} if for any two sequences $(a_n)$ in $A$ and $(b_m)$ in $B$, we have
  \begin{gather}
    \label{eq:DoubleLimit}
    \lim_n \lim_m \varphi(a_n,b_m) = \lim_m \lim_n \varphi(a_n,b_m)
  \end{gather}
  as soon as all the limits exist.
\end{dfn}

Given $\varphi\colon A \times B \rightarrow \bI$, we define $\varphi^\op\colon B \times A \rightarrow \bI$ by $\varphi^\op(b,a) = \varphi(a,b)$, observing that the double limit property is equivalent for $\varphi$ and for $\varphi^\op$.
If $\varphi_a = \varphi_c$ and $\varphi^b = \varphi^d$, then
\begin{gather*}
  \varphi(a,b) = \varphi_a(b) = \varphi_c(b) = \varphi^b(c) = \varphi^d(c) = \varphi(c,d).
\end{gather*}
It is therefore legitimate to define $\overline{\varphi}(\varphi_a,\varphi^b) = \varphi(a,b)$, and then $\varphi = \overline{\varphi} \circ (\pi_A \times \pi_B)$.
By definition, $\overline{\varphi}$ is separately continuous on $\varphi_A \times \varphi^B$.

\begin{thm}
  \label{thm:DoubleLimitTopological}
  Let $A$ and $B$ be non-empty sets, and $\varphi\colon A \times B \rightarrow \bI$.
  Then the following are equivalent:
  \begin{enumerate}
  \item
    \label{item:DoubleLimit}
    The function $\varphi$ has the double limit property.
  \item
    \label{item:DoubleLimitClosure}
    The function $\overline{\varphi}\colon \varphi_A \times \varphi^B \rightarrow \bI$ extends to a separately continuous function $\overline{\varphi_A} \times \overline{\varphi^B} \rightarrow \bI$.
  \item
    \label{item:DoubleLimitCCO}
    The function $\overline{\varphi}\colon \varphi_A \times \varphi^B \rightarrow \bI$ extends to a separately continuous, separately affine function $\cco(\varphi_A) \times \cco(\varphi^B) \rightarrow \bI$.
  \item
    \label{item:DoubleLimitDefinableClosure}
    Every $p \in \overline{\varphi_A}$ is $\varphi$-mean-definable, and every $q \in \overline{\varphi^B}$ is $\varphi^\op$-mean-definable.
  \item
    \label{item:DoubleLimitDefinableCCO}
    Every $p \in \cco(\varphi_A)$ is $\varphi$-mean-definable, and every $q \in \cco(\varphi^B)$ is $\varphi^\op$-mean-definable.
  \end{enumerate}
  Moreover, the extensions in \autoref{item:DoubleLimitClosure} and \autoref{item:DoubleLimitCCO} are unique.
\end{thm}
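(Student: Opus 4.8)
The plan is to run the single cycle (1)$\Rightarrow$(5)$\Rightarrow$(3)$\Rightarrow$(2)$\Rightarrow$(1), and to fold in (4) through the trivial implication (5)$\Rightarrow$(4) together with a construction (4)$\Rightarrow$(2). Before entering the cycle I would dispose of uniqueness and the soft steps. For uniqueness, note that $\varphi_A$ is dense in $\overline{\varphi_A}$ and $\co(\varphi_A)$ is dense in $\cco(\varphi_A)$ (and likewise on the $B$-side): any two separately continuous functions agreeing on $\varphi_A \times \varphi^B$ agree on $\overline{\varphi_A} \times \overline{\varphi^B}$, by fixing the first coordinate in $\varphi_A$ and using continuity in the second, then fixing the second in $\overline{\varphi^B}$ and using continuity in the first; the same argument, now additionally passing through convex combinations by separate affineness, handles $\cco(\varphi_A) \times \cco(\varphi^B)$. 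In particular (3)$\Rightarrow$(2) is mere restriction to $\overline{\varphi_A} \times \overline{\varphi^B} \subseteq \cco(\varphi_A) \times \cco(\varphi^B)$, and (5)$\Rightarrow$(4) holds because $\overline{\varphi_A} \subseteq \cco(\varphi_A)$ and $\overline{\varphi^B} \subseteq \cco(\varphi^B)$.

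For (2)$\Rightarrow$(1), given the separately continuous extension $\overline{\varphi}$ and sequences $(a_n),(b_m)$ for which both iterated limits in \eqref{eq:DoubleLimit} exist, I would pick a cluster point $p$ of $(\varphi_{a_n})$ in $\overline{\varphi_A}$ and a cluster point $q$ of $(\varphi^{b_m})$ in $\overline{\varphi^B}$, both compact as closed subsets of $\bI^B$, resp.\ $\bI^A$. For each fixed $n$ the existing inner limit $\lim_m \varphi(a_n,b_m)$ equals $\overline{\varphi}(\varphi_{a_n},q)$ by continuity in the second variable, and then the existing outer limit equals $\overline{\varphi}(p,q)$ by continuity in the first; the symmetric computation identifies the other iterated limit with the same value $\overline{\varphi}(p,q)$, so they agree. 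Next, the bridge (3)$\Rightarrow$(5): applying \autoref{lem:SeparatelyContinuousSeparatelyAffineWeakTopology} to the extension shows that $p \mapsto \overline{\varphi}(p,\cdot)$ is affine and weakly continuous from $\cco(\varphi_A)$ into $\cA(\cco(\varphi^B))$, so the image of $\cco(\varphi_A)$ lies in the weak closure of the convex hull of $\{\overline{\varphi}(\varphi_a,\cdot) : a \in A\}$, which by \autoref{lem:Mazur} equals its norm closure; reading a norm-approximation by $\frac1n \sum_i \overline{\varphi}(\varphi_{a_i},\cdot)$ at the points $\varphi^b$ (dense in $\cco(\varphi^B)$) is exactly the inequality of \autoref{dfn:MeanDefinable}. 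The reverse construction (5)$\Rightarrow$(3), and in the same way its closure-level analogue (4)$\Rightarrow$(2) with affineness dropped, goes the other direction: for mean-definable $p$ each approximating convex combination extends to the continuous affine coordinate-average $\xi \mapsto \frac1n \sum_i \xi(a_i)$ on $\cco(\varphi^B)$; these are uniformly Cauchy, hence converge to a continuous affine $\overline{\varphi}(p,\cdot)$, and running the dual approximation in the other variable and matching on the dense set $\varphi_A \times \varphi^B$ yields a genuinely separately continuous, separately affine extension, the well-definedness on convex combinations being supplied by \autoref{lem:SeparatelyContinuousExtensionToM} through barycentres, since coordinate functionals are affine.

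The crux is (1)$\Rightarrow$(5). I would fix $p \in \cco(\varphi_A)$ and apply \autoref{lem:MeanDefinableLS} to obtain countable $C \subseteq A$ and $D \subseteq B$ with $q = p\rest_D \in \cco(\psi_C)$, where $\psi = \varphi\rest_{C \times D}$, and such that $\psi$-mean-definability of $q$ returns $\varphi$-mean-definability of $p$. Since $\psi$ inherits the double limit property and $C,D$ are countable, $\bI^C$ and $\bI^D$ are metrisable and all the relevant closures are compact metrisable. In this metrisable setting I would build the extension by hand: a diagonal extraction lets me assume the iterated limits $\lim_k \lim_j \psi(c_k,d_j)$ exist, and the double limit property forces the common value to be independent of the representing sequences, producing a separately continuous extension of $\overline{\psi}$ to $\overline{\psi_C} \times \overline{\psi^D}$; \autoref{lem:SeparatelyContinuousExtensionToM} then upgrades it, via barycentres, to a separately continuous, separately affine extension on $\cco(\psi_C) \times \cco(\psi^D)$, i.e.\ condition (3) for $\psi$. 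The bridge of the previous paragraph now gives that $q$ is $\psi$-mean-definable, and \autoref{lem:MeanDefinableLS} transports this back to $p$; the statement for $\varphi^\op$ is symmetric.

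The only genuine obstacle is the metrisable core of the crux, namely extracting an honestly separately continuous extension from a purely sequential hypothesis; this is the content of Grothendieck's double limit phenomenon, and the care lies entirely in the diagonal/subsequence bookkeeping and in invoking \eqref{eq:DoubleLimit} precisely at the step where the order of limits is exchanged to show the value does not depend on representatives. This is exactly where the reduction to the countable case earns its keep, letting sequential compactness stand in for the Eberlein--Šmulian theorem. Everything else is soft (Hahn--Banach through \autoref{lem:Mazur}, density, and barycentres), and the $\varphi^\op$-symmetry of the double limit property halves the work throughout.
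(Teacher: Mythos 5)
Your proposal is correct and follows essentially the same route as the paper: reduction of \autoref{item:DoubleLimit}$\Rightarrow$\autoref{item:DoubleLimitDefinableCCO} to the countable (metrisable) case via \autoref{lem:MeanDefinableLS}, the sequential Grothendieck-style construction of the separately continuous extension there, the upgrade to $\cco(\varphi_A)\times\cco(\varphi^B)$ through \autoref{lem:SeparatelyContinuousExtensionToM} and barycentres, the Mazur/weak-topology bridge for \autoref{item:DoubleLimitCCO}$\Rightarrow$\autoref{item:DoubleLimitDefinableCCO}, the Cauchy-sequence construction for \autoref{item:DoubleLimitDefinableCCO}$\Rightarrow$\autoref{item:DoubleLimitCCO} and \autoref{item:DoubleLimitDefinableClosure}$\Rightarrow$\autoref{item:DoubleLimitClosure}, and cluster points for \autoref{item:DoubleLimitClosure}$\Rightarrow$\autoref{item:DoubleLimit}. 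The only differences are organisational (you fold the countable-case chain inside \autoref{item:DoubleLimit}$\Rightarrow$\autoref{item:DoubleLimitDefinableCCO} rather than running it as a separate preliminary cycle), and you correctly identify the diagonal/subsequence bookkeeping in the metrisable core as the one point requiring genuine care.
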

\begin{proof}
  The uniqueness part is clear.
  For the main assertion, let us consider first the case where $A$ and $B$ are countable, so $\bI^A$ and $\bI^B$ are metrisable.
  Then we have the following implications.
  \begin{cycprf*}[1]
  \item
    Let $p \in \overline{\varphi_A}$, and choose a sequence $(a_n)$ in $A$ such that $\varphi_{a_n} \rightarrow p$ pointwise.
    Similarly, for a point $q \in \overline{\varphi^B}$ we may choose a sequence $\varphi^{b_m} \rightarrow q$.
    Possibly passing to subsequences, we may assume that all the limits in \autoref{eq:DoubleLimit} exist, so
    \begin{gather}
      \label{eq:DoubleLimitTypes}
      \lim_n q(a_n) = \lim_m p(b_m).
    \end{gather}
    The left hand side does not depend on $(b_m)$ and the right hand side does not depend on $(a_n)$.
    Therefore, equality holds regardless of the chosen sequences.
    Call the common value $\varphi_1(p,q)$.
    By construction, $\varphi_1\colon \overline{\varphi_A} \times \overline{\varphi^B} \rightarrow \bI$ is separately continuous.
    In addition, $\varphi_1(p,\varphi^b) = p(b)$, so $\varphi_1(\varphi_a,\varphi^b) = \varphi(a,b)$, as desired.
  \item
    Let us use $\overline{\varphi}$ to denote the unique separately continuous extension to $\overline{\varphi_A} \times \overline{\varphi^B}$.
    Therefore, by \autoref{lem:SeparatelyContinuousExtensionToM}, it extends in a unique fashion to a separately affine, separately continuous function $\varphi_2\colon \cM(\overline{\varphi_A}) \times \cM(\overline{\varphi^B}) \rightarrow \bI$ (we use the fact that $\cM(\overline{\varphi_A})$ is separable as well).
    Assume that $p \in \cco(\varphi_A)$ is the barycentre of $\mu \in \cM(\overline{\varphi_A})$.
    If $\varphi^{b_m} \rightarrow q$, then
    \begin{gather*}
      \varphi_2(\mu,q) = \lim_m \int \overline{\varphi}(r,\varphi^{b_m}) \, d\mu(r) = \lim_m \int r(b_m) \, d\mu(r) = \lim_m p(b_m).
    \end{gather*}
    Therefore $\varphi_2(\mu,q)$ only depends on $(p,q)$.
    Consequently, $\varphi_2(\mu,\nu)$ only depends on $(p,\nu)$.
    By the same logic, if $q$ is the barycentre of $\nu$, then $\varphi_2(\mu,\nu)$ only depends on $(p,q)$.
    Since $\cM(\overline{\varphi_A}) \rightarrow \cco(\varphi_A)$ and $\cM(\overline{\varphi^B}) \rightarrow \cco(\varphi^B)$ are affine topological quotient maps, $\varphi_2$ factors through a separately affine, separately continuous function $\cco(\varphi_A) \times \cco(\varphi^B) \rightarrow \bI$.
  \item[\impnum{3}{5}]
    Let us use $\overline{\varphi}$ to denote the unique affine separately continuous extension to $\cco(\varphi_A) \times \cco(\varphi^B)$ and let $E = \cA\bigl( \cco(\varphi^B) \bigr)$.
    Let $\iota\colon \cco(\varphi_A) \rightarrow E$ be as given by \autoref{lem:SeparatelyContinuousSeparatelyAffineWeakTopology}.
    If $p \in \cco(\varphi_A)$, then $\iota(p)$ belongs to the weak closure of $\co\bigl( \iota(\varphi_a) : a \in A \bigl)$, and therefore to the norm closure by \autoref{lem:Mazur}.
    Thus $p$ is $\varphi$-mean-definable.

    Similarly, every $q \in \overline{\varphi^B}$ is $\varphi^\op$-mean-definable.
  \end{cycprf*}

  \smallskip
  Now we consider the general case.
  \begin{cycprf}[1]
  \item[\impnum{1}{5}]
    By \autoref{lem:MeanDefinableLS}, it is enough to consider the case where $A$ and $B$ are countable, which we have already established.
  \item[\impnum{5}{4}, \impnum{3}{2}]
    Immediate.
  \item[\impnum{5}{3}, \impnum{4}{2}]
    Let $\co(A)$ denote the collection of formal convex combinations of members of $A$.
    To $a = \sum \lambda_i a_i \in \co(A)$ we can associate the function $\varphi_a = \sum \lambda_i \varphi_{a_i} \in \co(\varphi_A) \subseteq \bI^B$, as well as $\hat{a} \in \cA\bigl( \cco(\varphi^B)\bigr)$, defined by $\hat{a}(q) = \sum \lambda_i q(a_i)$.
    If $a,a' \in \co(A)$, then
    \begin{gather*}
      \|\varphi_a - \varphi_{a'}\|_B
      = \|\hat{a} - \hat{a}'\|_{\varphi^B}
      = \|\hat{a} - \hat{a}'\|_{\co(\varphi^B)}
      = \|\hat{a} - \hat{a}'\|_{\cco(\varphi^B)}.
    \end{gather*}
    If $p \in \cco(\varphi_A)$ is $\varphi$-mean-definable, then there exists a sequence $(a_n) \subseteq \co(A)$ such that $\varphi_{a_n} \rightarrow p$ uniformly in $\bI^B$, so $(\hat{a}_n) \subseteq \cA\bigl( \cco(\varphi^B) \bigr)$ is Cauchy.
    Its limit only depends on $p$, so let $\hat{a}_n \rightarrow \hat{p}$.
    Similarly, if $q \in \cco(\varphi^B)$ is $\varphi^\op$-mean-definable, then $\varphi^{b_m} \rightarrow q$ uniformly in $\bI^A$ and $\hat{b}_m \rightarrow \hat{q} \in \cA\bigl( \cco(\varphi_A) \bigr)$.

    For every $n$ and $m$, we have an equality $\hat{a}_n(\varphi^{b_m}) = \hat{b}_m(\varphi_{a_n})$.
    Since both sequences converge uniformly, $\hat{p}(q) = \hat{q}(p)$ for all $p,q$ under consideration.
    The function $\overline{\varphi}(p,q) = \hat{p}(q) = \hat{q}(p)$ will do.
  \item[\impnum{2}{1}]
    Let $\overline{\varphi}\colon \overline{\varphi_A} \times \overline{\varphi^B} \rightarrow \bI$ denote the separately continuous extension.
    Let $(a_n)$ and $(b_m)$ be as in \autoref{dfn:DoubleLimit}.
    Let $p \in \overline{\varphi_A}$ and $q \in \overline{\varphi^B}$ be accumulation points of $(\varphi_{a_n})$ and $(\varphi^{b_m})$, respectively.
    Then both limits evaluate to $\overline{\varphi}(p,q)$.
  \end{cycprf}
\end{proof}

\begin{cor}
  \label{cor:DoubleLimitWeakEmbedding}
  Let $A$ and $B$ be non-empty sets, and $\varphi\colon A \times B \rightarrow \bI$.
  Then the following are equivalent:
  \begin{enumerate}
  \item
    The function $\varphi$ has the double limit property.
  \item
    \label{item:DoubleLimitWeakEmbeddingClosure}
    There exists a topological embedding $\iota \colon \overline{\varphi_A} \rightarrow C(\overline{\varphi^B})$, in the weak topology on $C(\overline{\varphi^B})$, such that $\varphi(a,b) = \iota(\varphi_a)(\varphi^b)$.
  \item
    \label{item:DoubleLimitWeakEmbeddingCCO}
    There exists an affine topological embedding $\iota\colon \cco(\varphi_A) \rightarrow \cA\bigl( \cco(\varphi^B) \bigr)$, in the weak topology on $\cA(\cco(\varphi^B))$, such that $\varphi(a,b) = \iota(\varphi_a)(\varphi^b)$.
  \end{enumerate}
  Moreover, the embeddings in \autoref{item:DoubleLimitWeakEmbeddingClosure} and \autoref{item:DoubleLimitWeakEmbeddingCCO} are unique and isometric (in the uniform convergence metric on $\bI^B$).
\end{cor}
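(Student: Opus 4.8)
The plan is to obtain both embeddings by \emph{currying} the extension of $\overline\varphi$ provided by \autoref{thm:DoubleLimitTopological}, and to read the converse directions off that same theorem. Throughout, assuming $\varphi$ has the double limit property, let $\overline\varphi$ denote the unique separately continuous, separately affine extension to $\cco(\varphi_A) \times \cco(\varphi^B)$ of \autoref{item:DoubleLimitCCO}, whose restriction to $\overline{\varphi_A} \times \overline{\varphi^B}$ is the separately continuous extension of \autoref{item:DoubleLimitClosure}. The one computation I would isolate at the outset is the identity $\overline\varphi(p, \varphi^b) = p(b)$: for fixed $b$, both sides are continuous — indeed affine — in $p$ and agree on $\varphi_A$, hence agree on $\overline{\varphi_A}$ by density, and on $\cco(\varphi_A)$ since $\co(\varphi_A)$ is dense there. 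This identity will give injectivity of each $\iota$ and the lower bound in the isometry claim.

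For \autoref{item:DoubleLimitWeakEmbeddingCCO} I would set $\iota(p) = \overline\varphi(p, \cdot)$. Separate affineness places $\iota(p)$ in $\cA(\cco(\varphi^B))$, and \autoref{lem:SeparatelyContinuousSeparatelyAffineWeakTopology}, applied to $\overline\varphi$ on $\cco(\varphi_A) \times \cco(\varphi^B)$, says exactly that $\iota$ is affine and continuous for the weak topology on $\cA(\cco(\varphi^B))$. Injectivity follows from $\iota(p)(\varphi^b) = p(b)$. Since $\cco(\varphi_A)$ is a closed subset of the compact space $\bI^B$, hence compact, and the weak topology is Hausdorff, the continuous injection $\iota$ is automatically a homeomorphism onto its image; this is the required embedding, and $\varphi(a,b) = \iota(\varphi_a)(\varphi^b)$ is immediate.

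For \autoref{item:DoubleLimitWeakEmbeddingClosure} I would likewise set $\iota(p) = \overline\varphi(p, \cdot) \in C(\overline{\varphi^B})$ for $p \in \overline{\varphi_A}$. Injectivity, the identity $\varphi(a,b) = \iota(\varphi_a)(\varphi^b)$, and the passage from continuity to embedding (via compactness of $\overline{\varphi_A}$) are as before, so everything reduces to showing that $\iota$ is continuous into the \emph{weak} topology of $C(\overline{\varphi^B})$ — the step I expect to be the main obstacle. Pointwise convergence $p_\alpha \to p$ yields $\iota(p_\alpha) \to \iota(p)$ pointwise on $\overline{\varphi^B}$ for free, but the weak topology is strictly finer, so this is not enough. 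To upgrade it I would test against an arbitrary $\mu \in C(\overline{\varphi^B})^*$; writing $\mu$ as a linear combination of probability measures and letting $R(\mu) \in \cco(\varphi^B)$ be a barycentre, the defining property of the barycentre applied to the affine continuous function $\overline\varphi(p, \cdot)$ gives $\int \overline\varphi(p,q)\, d\mu(q) = \overline\varphi(p, R(\mu))$, which is continuous in $p$ by separate continuity. Hence $p \mapsto \mu(\iota(p))$ is continuous for every $\mu$, as required. It is precisely here that the separately affine extension, rather than merely the separately continuous one, is used.

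For the converses \autoref{item:DoubleLimitWeakEmbeddingClosure}\,$\Rightarrow$\,(1) and \autoref{item:DoubleLimitWeakEmbeddingCCO}\,$\Rightarrow$\,(1), I would reconstruct an extension and quote \autoref{thm:DoubleLimitTopological}. Given $\iota$, set $\Phi(p,q) = \iota(p)(q)$: continuity in $q$ is immediate since $\iota(p)$ is continuous, and continuity in $p$ holds because $p \mapsto \iota(p)(q)$ is the composition of the weakly continuous $\iota$ with evaluation at $q$, a member of the relevant dual; in the case of \autoref{item:DoubleLimitWeakEmbeddingCCO}, $\Phi$ is moreover separately affine. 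Since $\Phi(\varphi_a, \varphi^b) = \varphi(a,b)$, this is exactly \autoref{item:DoubleLimitClosure} (respectively \autoref{item:DoubleLimitCCO}) of \autoref{thm:DoubleLimitTopological}, whence the double limit property. For the \emph{moreover}, uniqueness follows since any admissible $\iota$ agrees with $\overline\varphi$ on $\varphi_A \times \varphi^B$ and is then pinned down by continuity and density (and affineness in the $\cco$ case). For the isometry, $\|\iota(p) - \iota(p')\| = \sup_q |\overline\varphi(p,q) - \overline\varphi(p',q)|$; the lower bound $\geq \|p - p'\|_B$ comes from evaluating at $q = \varphi^b$, while the upper bound follows because on convex combinations of the $\varphi^b$ the affine function $\overline\varphi(p,\cdot) - \overline\varphi(p',\cdot)$ is bounded by $\|p-p'\|_B$, and these are dense.
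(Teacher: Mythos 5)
Your proposal is correct and follows essentially the same route as the paper: both obtain the embeddings by currying the extension from \autoref{thm:DoubleLimitTopological}\autoref{item:DoubleLimitCCO} via \autoref{lem:SeparatelyContinuousSeparatelyAffineWeakTopology}, and both read the converses off \autoref{thm:DoubleLimitTopological} by setting $\overline{\varphi}(p,q) = \iota(p)(q)$. The only (harmless) difference is that where you prove \autoref{item:DoubleLimitWeakEmbeddingClosure} directly with a barycentre computation, the paper deduces it from \autoref{item:DoubleLimitWeakEmbeddingCCO} by composing with the natural restriction morphism $\cA\bigl(\cco(\varphi^B)\bigr) \rightarrow C(\overline{\varphi^B})$ --- your argument is precisely the hands-on verification of that map's weak-weak continuity.
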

\begin{proof}
  \begin{cycprf}
  \item[\impnum{1}{3}]
    Use \autoref{thm:DoubleLimitTopological}\autoref{item:DoubleLimitCCO} and \autoref{lem:SeparatelyContinuousSeparatelyAffineWeakTopology}.
  \item[\impprev]
    Since $\overline{\varphi_A} \subseteq \cco(\varphi_A)$ and $\cA\bigl( \cco(\varphi^B) \bigr)$ admits a natural continuous morphism to $C( \overline{\varphi^B} )$.
  \item[\impprev]
    The function $\overline{\varphi}(p,q) = \iota(p)(q)$ is separately continuous, so we may use \autoref{thm:DoubleLimitTopological}\autoref{item:DoubleLimitClosure}.
  \end{cycprf}
\end{proof}

\begin{cor}
  \label{cor:DoubleLimitMazur}
  Let $A$ and $B$ be non-empty sets, and assume that $\varphi\colon A \times B \rightarrow \bI$ has the double limit property.
  Then for every convex subset $C \subseteq \cco(\varphi_A)$, its metric closure and topological closure agree.
\end{cor}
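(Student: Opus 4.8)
The two closures in question are taken inside $\bI^B$: the \emph{topological closure} refers to the product topology (pointwise convergence), while the \emph{metric closure} refers to the uniform metric $\|\cdot\|_B$. Since uniform convergence implies pointwise convergence, the metric topology is finer than the product topology, so the metric closure of $C$ is automatically contained in its topological closure. The plan is to prove the reverse inclusion by transporting the problem into a Banach space, where it becomes an instance of Mazur's Lemma (\autoref{lem:Mazur}).

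Concretely, I would invoke the affine isometric embedding $\iota\colon \cco(\varphi_A) \rightarrow \cA\bigl( \cco(\varphi^B) \bigr)$ supplied by \autoref{cor:DoubleLimitWeakEmbedding}\autoref{item:DoubleLimitWeakEmbeddingCCO}. Recall its two crucial features: it is a homeomorphism onto its image when the source carries the product topology and the target the weak topology, and, by the ``moreover'' clause of that corollary, it is isometric from $(\cco(\varphi_A), \|\cdot\|_B)$ into $\cA\bigl(\cco(\varphi^B)\bigr)$ with its supremum norm. The target is a closed subspace of $C\bigl(\cco(\varphi^B)\bigr)$, hence itself a Banach space, so \autoref{lem:Mazur} applies to it.

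Now let $C \subseteq \cco(\varphi_A)$ be convex and let $x$ lie in its topological closure. Since $\iota$ is affine, $\iota(C)$ is convex; since $\iota$ is continuous from the product to the weak topology, $\iota(x)$ lies in the weak closure of $\iota(C)$. By \autoref{lem:Mazur} the weak and norm closures of the convex set $\iota(C)$ coincide, so $\iota(x)$ lies in the norm closure of $\iota(C)$. Choosing a sequence $(c_n) \subseteq C$ with $\iota(c_n) \rightarrow \iota(x)$ in norm and using that $\iota$ is isometric for $\|\cdot\|_B$, we get $\|c_n - x\|_B \rightarrow 0$, so $x$ belongs to the metric closure of $C$. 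This gives the missing inclusion, and hence the equality.

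The argument is almost entirely bookkeeping about which topology corresponds to which under $\iota$: the forward (continuity) direction handles the passage into the weak closure, the isometry handles the passage back out of the norm closure, and Mazur bridges the two inside the Banach space. The only point needing care — and the nearest thing to an obstacle — is checking that these correspondences are exactly as claimed, i.e.\ that $\iota$ is simultaneously a topological embedding for (product, weak) and an isometry for (uniform, sup); but this is precisely the content already recorded in \autoref{cor:DoubleLimitWeakEmbedding}. With that in hand, no compactness or completeness of the image is required, since we only push the topological closure forward along $\iota$ and pull a norm-convergent sequence back along it.
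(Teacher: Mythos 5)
Your proof is correct and follows exactly the paper's route: the paper's proof is the one-line citation ``By \autoref{cor:DoubleLimitWeakEmbedding} and \autoref{lem:Mazur}'', and your argument is precisely the careful unwinding of that citation, using the affine topological and isometric embedding into $\cA\bigl(\cco(\varphi^B)\bigr)$ to reduce the claim to Mazur's Lemma.
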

\begin{proof}
  By \autoref{cor:DoubleLimitWeakEmbedding} and \autoref{lem:Mazur}.
\end{proof}

Continuing in the same setting, let $G$ be a group, and assume that it acts on the sets $A$ and $B$ in a manner that leaves $\varphi$ invariant:
\begin{gather*}
  \varphi(ga,gb) = \varphi(a,b) \qquad \forall a \in A, \ b \in B, \ g \in G.
\end{gather*}
The natural action $G \curvearrowright \bI^B$ (respectively, $G \curvearrowright \bI^A$) restricts to an action $G \curvearrowright \cco(\varphi_A)$ (respectively, $G \curvearrowright \cco(\varphi^B)$).

\begin{cor}
  \label{cor:DoubleLimitRyllNardzewski}
  Let $A$ and $B$ be non-empty sets, and assume that $\varphi\colon A \times B \rightarrow \bI$ has the double limit property.
  Let $G$ be a group acting on $A$ and $B$ and leaving $\varphi$ invariant.
  Let $K\subseteq \cco(\varphi_A)$ be a non-empty, closed, convex subset that is $G$-invariant for the induced action $G \curvearrowright \cco(\varphi_A)$.
  Then the restricted action $G \curvearrowright K$ admits a fixed point.
\end{cor}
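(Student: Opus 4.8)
The plan is to transport the problem into a Banach space, where it becomes an instance of a Ryll--Nardzewski-type fixed point theorem. By \autoref{cor:DoubleLimitWeakEmbedding}\autoref{item:DoubleLimitWeakEmbeddingCCO}, the canonical map $\iota\colon \cco(\varphi_A) \rightarrow \cA\bigl(\cco(\varphi^B)\bigr)$, given by $\iota(p)(q) = \overline{\varphi}(p,q)$, is an affine topological embedding for the weak topology on $\cA\bigl(\cco(\varphi^B)\bigr)$, and is isometric for the uniform convergence metric $\|\cdot\|_B$. Since $K$ is closed in the compact set $\cco(\varphi_A)$, it is compact, so $\iota(K)$ is a non-empty, weakly compact, convex subset of the Banach space $\cA\bigl(\cco(\varphi^B)\bigr)$, on which the weak topology agrees with the product topology and the Banach norm restricts to $\|\cdot\|_B$.

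First I would check that $\iota$ is $G$-equivariant and that $G$ acts by isometries, so that $\iota(K)$ carries an action of $G$ by weakly continuous affine isometries. For the isometry, recall that $G$ acts on $\bI^B$ by permuting coordinates, $(g\cdot f)(b) = f(g^{-1}b)$, so that $\|g\cdot f - g\cdot h\|_B = \|f-h\|_B$ for all $f,h$; in particular the induced action on $\cco(\varphi_A)$ preserves $\|\cdot\|_B$. For equivariance, the $\varphi$-invariance gives $g\cdot\varphi_a = \varphi_{ga}$ and $g\cdot\varphi^b = \varphi^{gb}$, whence on the generating points
\begin{gather*}
  \iota(g\cdot\varphi_a)(\varphi^b) = \varphi(ga,b) = \varphi(a,g^{-1}b) = \iota(\varphi_a)(g^{-1}\cdot\varphi^b) = \bigl(g\cdot\iota(\varphi_a)\bigr)(\varphi^b),
\end{gather*}
and this extends to all of $\cco(\varphi_A)$ by the separate continuity and affineness of $\overline{\varphi}$. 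Hence $\iota$ intertwines the two $G$-actions.

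With these verifications in hand, $G\curvearrowright\iota(K)$ is a group of weakly continuous affine isometries of a non-empty weakly compact convex set. Because the action is by isometries it is automatically \emph{noncontracting}: for distinct $x,y$ one has $\inf_{g\in G}\|g\cdot x - g\cdot y\|_B = \|x-y\|_B > 0$. The existence of a common fixed point is then precisely the content of the Ryll--Nardzewski fixed point theorem, and pulling the fixed point back through $\iota$ yields a fixed point of $G\curvearrowright K$.

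The substantive content sits entirely in that last invocation, so if one wishes to keep the section self-contained one argues directly. By Zorn's lemma together with the weak compactness of $K$, choose a minimal non-empty closed convex $G$-invariant subset $K_0\subseteq K$, and suppose for contradiction that $d = \sup\{\|x-y\|_B : x,y\in K_0\} > 0$. An averaging argument (apply the Tychonoff fixed point theorem to a finite average $\frac1n\sum_{i<n} g_i$, whose fixed-point sets over all finite subsets have the finite intersection property) produces candidate common fixed points, and the crux is Namioka's geometric lemma on weakly compact convex sets, which together with the noncontracting property forces each group element to fix such a point and thereby collapses $K_0$ to a singleton; here \autoref{cor:DoubleLimitMazur} is what guarantees that the norm and weak closures of the convex hulls entering the argument coincide. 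I expect this geometric lemma to be the only genuinely non-formal ingredient — the main obstacle — while the equivariance and isometry bookkeeping above is routine.
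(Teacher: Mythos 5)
Your proposal is correct and follows essentially the same route as the paper: embed $\cco(\varphi_A)$ into $\cA\bigl(\cco(\varphi^B)\bigr)$ via \autoref{cor:DoubleLimitWeakEmbedding}, verify that the embedding is $G$-equivariant and that $G$ acts by affine isometries, and invoke the Ryll--Nardzewski fixed point theorem. The closing sketch of a self-contained proof of Ryll--Nardzewski is extra (the paper simply cites Namioka and Glasner for it) and is not needed for the argument.
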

\begin{proof}
  We are going to use the Ryll-Nardzewski Fixed Point Theorem: if $G$ acts on a weakly compact convex subset of a Banach space $E$ by affine isometries of $E$, then it admits a fixed point (see, for instance, \cite{Namioka:GeometricProof} or \cite[Thm.~5.2]{Glasner:ProximalFlows}).
  By \autoref{cor:DoubleLimitWeakEmbedding}, we may embed $\cco(\varphi_A)$ (and $K$) as a weakly compact convex subset of $E = \cA\bigl( \cco(\varphi^B)\bigr)$.
  The  action $G \curvearrowright \cco(\varphi^B)$ induces an action $G \curvearrowright E$, by affine isometries.

  All that is left to show is that the embedding $\iota\colon \cco(\varphi_A) \rightarrow E$ respects the action of $G$.
  Let $p = \varphi_a$ and $q = \varphi^b$.
  Then $gp = p \circ g^{-1} = \varphi_{ga}$, and similarly, $gq = \varphi^{gb}$.
  Therefore, $\iota(gp)(gq) = \varphi(ga,gb) = \varphi(a,b) = \iota(p)(q)$.
  Since all the actions of $g$ considered here are affine and continuous, we deduce that $\iota(gp)(gq) = \iota(p)(q)$ for every $p \in \cco(\varphi_A)$ and $q \in \cco(\varphi^B)$, so $\iota(gp) = \iota(p) \circ g^{-1} = g \iota(p)$.
  This completes the proof.
\end{proof}

In later sections, $\varphi$ will represent a formula in affine (or, if we wished so, continuous) logic, and the space $\cco(\varphi_A)$ will coincide with the (affine) types in this formula.
Slightly more generally, we shall wish to consider types in several formulas.
We conclude the section by showing that, in the present context, this introduces nothing new.

\begin{lem}
  \label{lem:DoubleLimitGluing}
  Assume that we are given $k \in \bN$, a non-empty set $A$, and for $i < k$, a non-empty set $B_i$ together with a function $\varphi_i\colon A \times B_i \rightarrow \bI$.
  Let $B = \coprod_i B_i$ be the disjoint union, and let $\varphi = \coprod_i \varphi_i \colon A \times B \rightarrow \bI$ (so $\varphi(a,b) = \varphi_i(a,b)$ for $b \in B_i$).

  Then $\varphi$ has the double limit property if and only if each $\varphi_i$ does.
\end{lem}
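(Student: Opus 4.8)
The plan is to prove the two implications separately, with the forward one being essentially a restriction argument and the reverse one requiring a pigeonhole reduction to a single piece.

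For the forward direction, suppose $\varphi$ has the double limit property and fix $i < k$. Given sequences $(a_n)$ in $A$ and $(b_m)$ in $B_i$, I observe that $(b_m)$ is also a sequence in $B$ and that $\varphi_i(a_n,b_m) = \varphi(a_n,b_m)$, since $b_m \in B_i$. Hence whenever all the iterated limits demanded in \autoref{eq:DoubleLimit} exist for the pair $(a_n),(b_m)$ read through $\varphi_i$, the very same numerical values arise for $\varphi$; the double limit property of $\varphi$ then delivers the equality, which is exactly what is required for $\varphi_i$. In short, each $\varphi_i$ inherits the property as the restriction $\varphi\rest_{A \times B_i}$, and no real work is involved.

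For the reverse direction, assume each $\varphi_i$ has the double limit property and let $(a_n)$ in $A$ and $(b_m)$ in $B$ be sequences for which all the limits in \autoref{eq:DoubleLimit} exist; write $L_1$ and $L_2$ for the two iterated limits, which I must show coincide. Each $b_m$ lies in exactly one piece $B_{i(m)}$, and since there are only $k$ pieces but infinitely many indices $m$, some $i < k$ has $S_i = \{m : b_m \in B_i\}$ infinite. Reindexing $(b_m)_{m \in S_i}$ as a sequence in the single set $B_i$, I intend to apply the double limit property of $\varphi_i$ to the pair $(a_n)$, $(b_m)_{m \in S_i}$. The crux is that passing to the sub-index set $S_i$ changes neither iterated limit: for fixed $n$ the inner limit $\lim_{m \in S_i} \varphi(a_n,b_m)$ is a subsequence of the convergent sequence with limit $\lim_m \varphi(a_n,b_m)$, hence equals it, so the outer limit over $n$ remains $L_1$; dually, $\lim_n \varphi(a_n,b_m)$ is untouched by restricting $m$, while the outer limit over $m \in S_i$ is a subsequence of the convergent sequence producing $L_2$, hence equals $L_2$. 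Since moreover $\varphi(a_n,b_m) = \varphi_i(a_n,b_m)$ for all $m \in S_i$, all four limits required by \autoref{dfn:DoubleLimit} exist for $\varphi_i$ along these sequences, and its double limit property forces $L_1 = L_2$.

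The only genuine obstacle is the bookkeeping: I must confirm that every limit demanded by the definition survives the passage to the subsequence indexed by $S_i$, which reduces to the elementary fact that a subsequence of a convergent sequence has the same limit. The finiteness of $k$ is precisely what powers the pigeonhole step; with infinitely many pieces the argument would collapse, as the sequence $(b_m)$ could then fail to concentrate in any single $B_i$. I note that one could alternatively route the reverse direction through the topological reformulation of \autoref{thm:DoubleLimitTopological}, but the direct sequential argument above is shorter and avoids unpacking the product structure of $\bI^B = \prod_i \bI^{B_i}$.
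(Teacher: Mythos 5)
Your proof is correct and follows essentially the same route as the paper's: the paper phrases both directions contrapositively (a failure witness for $\varphi_i$ is one for $\varphi$; a failure witness for $\varphi$ yields, by pigeonhole on the finitely many pieces, a failure witness for some $\varphi_i$), which is exactly your restriction argument and your pigeonhole-plus-subsequence argument in direct form. The subsequence bookkeeping you carry out is the content of the paper's phrase ``we may extract a failure witness for $\varphi_i$'', so nothing is missing on either side.
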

\begin{proof}
  Every witness to the failure of the double limit property for $\varphi_i$ is also one for $\varphi$.
  Conversely, given a witness $(a_n,b_n)$ to the failure of the double limit property for $\varphi$, there exists $i$ such that $b_n \in B_i$ infinitely often, and we may extract a failure witness for $\varphi_i$.
\end{proof}

Note that in the setting of the previous lemma, in the space $\bI^B = \prod_i \bI^{B_i}$ we have $\varphi_a = \bigl( (\varphi_i)_a : i < k \bigr)$ for every $a\in A$, and thus
\begin{gather*}
  \cco(\varphi_A)
  \subseteq
  \prod_i \cco\bigl( (\varphi_i)_A \bigr)
  \subseteq
  \prod_i \bI^{B_i} = \bI^B.
\end{gather*}

\section{Stability in a structure}
\label{sec:StabilityInStructure}

Let us recall from \cite[\textsection3]{BenYaacov-Ibarlucia-Tsankov:AffineLogic} a few definitions and facts regarding (affine) types in a language $\cL$.
We denote by $\cL^\aff_x$ the collection of all affine formulas in the variables $x$, up to logical equivalence.
It forms an \emph{order unit space}.
If $M$ is an $\cL$-structure and $a \in M^x$, then $p = \tp^\aff(a)$, the \emph{affine type} of $a$ (in $M$), is the map that associates to an affine formula $\varphi \in \cL^\aff_x$ the value $\varphi^p = \varphi(a)$.
Viewed in this fashion, $p$ is a state of $\cL^\aff_x$, and the collection of all affine types (of $x$-tuples, in $\cL$-structures) is exactly the state space $\tS(\cL^\aff_x)$.
We therefore also call it the space of \emph{types} in $\cL$, and denote it $\tS^\aff_x(\cL)$.
Conversely, $\cA\bigl( \tS^\aff_x(\cL) \bigr)$ is the norm-completion of $\cL^\aff_x$.
Types in this sense are \emph{without parameters}.

Affine types \emph{with parameters} in a fixed $\cL$-structure $M$ are defined similarly.
If $N \succeq^\aff M$ is an affine extension and $a \in N^x$, then $p = \tp^\aff(a/M)$, the \emph{affine type over $M$} of $a$, is the map that associates to every affine formula with parameters in $M$, say $\varphi(x,b)$, the value $\varphi(x,b)^p = \varphi(a,b)^N$.
The collection of all such types over $M$ (realised in affine extensions of $M$) is denoted $\tS^\aff_x(M)$.
It is naturally a compact convex set, and the map $p \mapsto \varphi(x,b)^p$ is continuous and affine for every formula $\varphi(x,b)$.
The collection of all such functions is dense in $\cA\bigl( \tS^\aff_x(M) \bigr)$, the order unit space of continuous affine functions on $\tS^\aff_x(M)$, with the norm of uniform convergence.

\begin{dfn}
  \label{dfn:PhiType}
  Let $x$ and $y$ be two disjoint tuples of variables, $a \in M^x$, and $B \subseteq M$.
  Let $\varphi(x,y)$ be a formula in \emph{some} sense: affine, continuous, or anything else that we may interpret as a function on $M^{x,y} = M^x \times M^y$.
  We then define the \emph{$\varphi$-type} of $a$ over $B$, denoted $p = \tp_\varphi(a/B)$, as the map that associates to every \emph{instance} $\varphi(x,b)$, with $b \in B^y$, the value $\varphi(x,b)^p = \varphi(a,b)^M$.
\end{dfn}

Back to the affine context, let us fix an affine $\cL$-formula $\varphi(x,y)$.
If $N \succeq^\aff M$ is an affine extension and $a \in N^x$, then $p = \tp_\varphi(a/M)$, in the sense of the previous paragraph, is an \emph{affine $\varphi$-type} over $M$ (here, \emph{affine} refers to the extension $N \succeq^\aff M$, since $\varphi$ is already assumed affine).
The collection of all affine $\varphi$-types over $M$ is denoted $\tS^\aff_\varphi(M)$.
By definition, $\tp_\varphi(a/M)$ is the restriction of $\tp^\aff(a/M)$ to $M$-instances of $\varphi$.
Therefore, the space $\tS^\aff_\varphi(M)$ is a topological affine quotient of $\tS^\aff_x(M)$, and the functions $p \mapsto \varphi(x,b)^p$, together with the constant functions, generate a dense subspace of $\cA\bigl( \tS^\aff_\varphi(M) \bigr)$.

Let $A = M^x$ and $B = M^y$, so the interpretation of $\varphi$ in $M$ is a function $\varphi\colon A \times B \rightarrow \bI \subseteq \bR$ for some compact interval $\bI$.
An affine $\varphi$-type $p$ is completely determined by the function $dp\colon B \rightarrow \bI$ sending $b \mapsto \varphi(x,b)^p$.
This gives rise to an embedding $\tS^\aff_\varphi(M) \hookrightarrow \bI^B$, and it is not difficult to see that this is an affine topological embedding.
It is moreover isometric, when we equip $\bI^B$ with the metric of uniform convergence, and $\tS^\aff_\varphi(M)$ with the distance function
\begin{gather}
  \label{eq:PhiTypeDistance}
  \partial(p,q) = \|dp - dq\| = \sup_{b \in M^y} \, |\varphi(x,b)^p - \varphi(x,b)^q|.
\end{gather}

We saw in \cite{BenYaacov-Ibarlucia-Tsankov:AffineLogic} that $\tS^\aff_x(M)$ is the closed convex envelope of the set of types realised in $M$.
Therefore, its quotient $\tS^\aff_\varphi(M)$ is the closed convex envelope of the $\varphi$-types realised in $M$, namely, of the set of $\tp_\varphi(a/M)$ for $a \in A^x$.
If $a \in A^x$, then the identification of $\tS^\aff_\varphi(M)$ as a convex compact subset of $\bR^B$ sends $\tp_\varphi(a/M)$ to the function $\varphi_a\colon b \mapsto \varphi(a,b)$.
It follows that $\tS^\aff_\varphi(M)$ as a whole is identified with $\cco(\varphi_A) \subseteq \bR^B$.
Similarly, if $\varphi^\op$ denotes the formula $\varphi^\op(y,x) = \varphi(x,y)$, then $\tS^\aff_{\varphi^\op}(M)$ can be identified with $\cco(\varphi^B) \subseteq \bR^A$.
Thus, we find ourselves in the context of \autoref{sec:DoubleLimit}.

While the case of continuous logic is less pertinent to the present paper, it is worthwhile to mention it as well, albeit more briefly.
We consider arbitrary continuous formulas (rather than only the affine ones), but only consider types realised in elementary extensions $N \succeq^\cont M$.
A \emph{continuous type} $\tp^\cont(a/M)$ can be identified with the map that associates to every continuous formula with parameters in $M$ its value at $a$.
Given a continuous formula $\varphi(x,y)$, an elementary extension $N \succeq^\cont M$ and $a \in N^x$, $\tp_\varphi(a/M)$ is a \emph{continuous $\varphi$-type} over $M$.
The space of all continuous types over $M$ in $x$, denoted $\tS^\cont_x(M)$, is compact, and the space of continuous $\varphi$-types $\tS^\cont_\varphi(M)$ is its topological quotient.
As above, it can be identified with its image in $\bR^B$, which is a compact subset.
Since realised types are dense in continuous logic, $\tS^\cont_\varphi(M)$ is identified with $\overline{\varphi_A} \subseteq \bR^B$, and similarly, $\tS^\cont_{\varphi^\op}(M)$ with $\overline{\varphi^B}$.
In the special case where $\varphi$ is an affine formula, every continuous $\varphi$-type is an affine $\varphi$-type (since every elementary extension is an affine one), which corresponds to the inclusion $\overline{\varphi_A} \subseteq \cco(\varphi_A)$.

Recall from \cite{BenYaacov-Ibarlucia-Tsankov:AffineLogic} that an affine definable predicate on a structure $M$, \emph{with parameters} in $M$, in the variables $x$, is a function $\psi\colon M^x \rightarrow \bR$ that is a uniform limit of affine formulas in $x$ with parameters in $M$.
Such predicates stand in one-to-one correspondence with continuous affine functions on $\tS^\aff_x(M)$, i.e., with members of $\cA\bigl( \tS^\aff_x(M) \bigr)$, and we often tend to ignore the distinction.
In particular, an affine definable predicate on $M$ (with parameters in $M$) extends in a unique fashion to an affine definable predicate on any affine extension $N \succeq^\aff M$ (with parameters in $M \subseteq N$).

\begin{dfn}
  Let $M$ be an $\cL$-structure, and $\varphi(x,y)$ an affine formula.
  Let $\psi\colon M^x \rightarrow \bR$ be a function.
  \begin{itemize}
  \item We say that $\psi$ is an \emph{affine $\varphi$-predicate} on $M$ if it is a uniform limit of affine combinations of $M$-instances of $\varphi$.
  \item We say that $\psi$ is a \emph{mean $\varphi$-predicate} on $M$ if it is a uniform limit of convex combinations of $M$-instances of $\varphi$.
  \end{itemize}
\end{dfn}

An affine $\varphi$-predicate on $M$ is the same thing as an affine definable predicate with parameters in $M$ that factors via $\tS^\aff_\varphi(M)$, i.e., a member of $\cA\bigl( \tS^\aff_\varphi(M) \bigr)$.
In particular, if $\psi$ is an affine $\varphi$-predicate on $M$, then it can be evaluated at any $p \in \tS^\aff_\varphi(M)$.
A mean $\varphi$-predicate is, in particular, an affine $\varphi$-predicate, and can be expressed as a uniform limit of means $\frac{1}{n} \sum_{i<n} \varphi(x,b_i)$.

When $\varphi$ is a continuous formula, we can define a mean $\varphi$-predicate on $M$ in the same manner.
It is in particular a definable $\varphi$-predicate, i.e., a continuous function on $\tS^\cont_\varphi(M)$.

\begin{dfn}
\label{dfn:DefinitionOverModel}
  Let $\varphi(x,y)$ be an affine formula and let $p \in \tS^\aff_\varphi(M)$ be a $\varphi$-type over $M$.
  As before, let $dp\colon M^y\to\bR$ denote the function sending $b \mapsto \varphi(x,b)^p$.
  \begin{itemize}
  \item We say that $p$ is \emph{affinely definable} if $dp$ is an affine definable predicate (in $y$, with parameters in $M$).
    We then call $dp$ the \emph{definition} of $p$, whence the notation.
  \item We say that $p$ is \emph{affinely $\varphi^\op$-definable} if $dp$ is an affine $\varphi^\op$-predicate.
  \item We say that $p$ is \emph{mean-definable} if $dp$ is a mean $\varphi^\op$-predicate.
  \end{itemize}
\end{dfn}

If we allow ourselves to change the language, then we may always make $p$ affinely definable by naming $dp$ by a new predicate symbol.
Being affinely $\varphi^\op$-definable or mean-definable are more robust notions, which are not subject to such ``cheap'' manipulations.

\begin{lem}
  Let $\varphi(x,y)$ be an affine formula, $M$ a structure, $A = M^x$ and $B = M^y$.
  Then the following conditions are equivalent:
  \begin{enumerate}
  \item The function $\varphi\colon A \times B \rightarrow \bR$ factors via a separately continuous and affine function $\overline{\varphi}\colon \cco(\varphi_A) \times \cco(\varphi^B) \rightarrow \bR$.
  \item Every $p \in \tS^\aff_\varphi(M)$ is affinely $\varphi^\op$-definable, every $q \in \tS^\aff_{\varphi^\op}(M)$ is affinely $\varphi$-definable, and moreover, the \emph{symmetry} property holds, namely, $dp(q) = dq(p)$.
  \end{enumerate}
\end{lem}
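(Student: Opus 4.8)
The plan is to transport both conditions into the geometry of the type spaces, using the identifications $\tS^\aff_\varphi(M) = \cco(\varphi_A)$ and $\tS^\aff_{\varphi^\op}(M) = \cco(\varphi^B)$ established above, and then to observe that the two conditions are merely two descriptions of a single object: a separately continuous, separately affine kernel on $\cco(\varphi_A) \times \cco(\varphi^B)$ extending $\varphi$. Throughout I use that an affine $\varphi^\op$-predicate on $M$ is the same as a member $h \in \cA\bigl(\cco(\varphi^B)\bigr)$, recovered from its values on realised types by $b \mapsto h(\varphi^b)$, and symmetrically for affine $\varphi$-predicates and $\cA\bigl(\cco(\varphi_A)\bigr)$. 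I also record the tautology $dp(b) = \varphi(x,b)^p = p(b)$, the point $p \in \bI^B$ evaluated at the coordinate $b$ (and dually $dq(a) = q(a)$), which is what links a type's definition to the coordinate functions.

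First I would treat (1) $\Rightarrow$ (2). Given the extension $\overline\varphi$, fix $p$ and set $h_p = \overline\varphi(p,\cdot)$, which lies in $\cA\bigl(\cco(\varphi^B)\bigr)$ by separate continuity and separate affineness. The maps $p \mapsto \overline\varphi(p,\varphi^b)$ and $p \mapsto p(b)$ are both affine and continuous on $\cco(\varphi_A)$ and agree on $\varphi_A$, since $\overline\varphi(\varphi_a,\varphi^b) = \varphi(a,b) = \varphi_a(b)$; being affine they agree on $\co(\varphi_A)$, and being continuous they agree on its closure $\cco(\varphi_A)$. Hence $h_p(\varphi^b) = p(b) = dp(b)$, which is exactly the statement that $dp$ is the affine $\varphi^\op$-predicate represented by $h_p$; so $p$ is affinely $\varphi^\op$-definable, with $dp(q) = h_p(q) = \overline\varphi(p,q)$. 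The mirror-image argument for $\varphi^\op$ shows each $q$ is affinely $\varphi$-definable with $dq(p) = \overline\varphi(p,q)$, and comparing the two expressions delivers the symmetry $dp(q) = \overline\varphi(p,q) = dq(p)$ at no extra cost.

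Then I would prove (2) $\Rightarrow$ (1). Affine $\varphi^\op$-definability of each $p$ provides a unique $h_p \in \cA\bigl(\cco(\varphi^B)\bigr)$ with $h_p(\varphi^b) = p(b)$, and affine $\varphi$-definability of each $q$ a unique $g_q \in \cA\bigl(\cco(\varphi_A)\bigr)$ with $g_q(\varphi_a) = q(a)$; uniqueness holds because a continuous affine function is determined by its restriction to the realised types, whose closed convex hull is the entire space. The symmetry hypothesis says $h_p(q) = dp(q) = dq(p) = g_q(p)$, so a single well-defined function is obtained by $\overline\varphi(p,q) := h_p(q) = g_q(p)$. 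For fixed $q$ it coincides with $g_q$, hence is continuous and affine in $p$; for fixed $p$ it coincides with $h_p$, hence continuous and affine in $q$; and $\overline\varphi(\varphi_a,\varphi^b) = h_{\varphi_a}(\varphi^b) = \varphi_a(b) = \varphi(a,b)$, so $\overline\varphi$ factors $\varphi$, which is condition (1).

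I expect the only real subtlety to be the gluing in (2) $\Rightarrow$ (1), and this is precisely where the symmetry hypothesis is indispensable: affine $\varphi^\op$-definability makes $q \mapsto h_p(q)$ affine and continuous but says nothing about the dependence on $p$, while affine $\varphi$-definability controls the $p$-dependence only through $g_q$; it is the identity $h_p(q) = g_q(p)$ that forces these two families to be the two marginals of one separately continuous, separately affine kernel. It is worth noting in passing that condition (1) is verbatim \autoref{thm:DoubleLimitTopological}\autoref{item:DoubleLimitCCO}, so the lemma silently also ties affine definability-with-symmetry to the double limit property and to mean-definability; but the direct translation above is the shortest route to the stated equivalence.
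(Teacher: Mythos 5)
Your proposal is correct and follows essentially the same route as the paper's proof: translate both conditions through the identifications $\tS^\aff_\varphi(M) = \cco(\varphi_A)$ and $\tS^\aff_{\varphi^\op}(M) = \cco(\varphi^B)$, read off the definitions as the sections $\overline{\varphi}(p,\cdot)$ and $\overline{\varphi}(\cdot,q)$ in one direction, and glue the two families of definitions into a single separately continuous, separately affine kernel via the symmetry identity in the other. The only difference is that you spell out the verification $dp(b) = \overline{\varphi}(p,\varphi^b)$ (by density of the convex hull of realised types), which the paper states without comment; this is a welcome, not a divergent, addition.
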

\begin{proof}
  In one direction, consider $p \in \tS^\aff_\varphi(M) = \cco(\varphi_A)$.
  Then for every $b \in B$, $dp(b) = \overline{\varphi}(p,\varphi^b)$.
  Having fixed $p$, this is a function on $M^y$ that factors continuously and affinely via $\tS^\aff_{\varphi^\op}(M)$, i.e., an affine $\varphi^\op$-predicate.
  Similarly, every $q \in \tS^\aff_{\varphi^\op}(M)$ is affinely $\varphi$-definable.
  Finally, $dp(q) = \overline{\varphi}(p,q) = dq(p)$.

  For the converse, since every $p \in \tS^\aff_\varphi(M)$ is affinely $\varphi^\op$-definable, we may define $\overline{\varphi}_1(p,q) = dp(q)$, and this is affine and continuous in $q$.
  Similarly, we may define $\overline{\varphi}_2(p,q) = dq(p)$, and this is affine and continuous in $p$.
  Finally, by symmetry, the two agree.
\end{proof}

In the continuous case, we have the analogous property, which we state without proof.

\begin{lem}
  \label{lem:separate-continuity-translation-cont}
  Let $\varphi(x,y)$ be a continuous formula, $M$ a structure, $A = M^x$ and $B = M^y$.
  Then the following conditions are equivalent:
  \begin{enumerate}
  \item The function $\varphi\colon A \times B \rightarrow \bR$ factors via a separately continuous function $\overline{\varphi}\colon \overline{\varphi_A} \times \overline{\varphi^B} \rightarrow \bR$.
  \item Every $p \in \tS^\cont_\varphi(M)$ is $\varphi^\op$-definable, every $q \in \tS^\cont_{\varphi^\op}(M)$ is $\varphi$-definable, and moreover, the \emph{symmetry} property holds, namely, $dp(q) = dq(p)$.
  \end{enumerate}
\end{lem}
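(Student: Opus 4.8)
The plan is to mirror the proof of the preceding (affine) lemma almost verbatim, replacing the closed convex envelopes $\cco(\varphi_A)$ and $\cco(\varphi^B)$ by the pointwise closures $\overline{\varphi_A}$ and $\overline{\varphi^B}$, and dropping every mention of affineness. The only inputs I rely on are the identifications $\tS^\cont_\varphi(M) = \overline{\varphi_A}$ and $\tS^\cont_{\varphi^\op}(M) = \overline{\varphi^B}$ recorded above, together with the continuous reading of \autoref{dfn:DefinitionOverModel}: a type $p \in \tS^\cont_\varphi(M)$ is \emph{$\varphi^\op$-definable} exactly when $dp$ is a continuous $\varphi^\op$-predicate, i.e.\ when $dp\colon B \rightarrow \bR$ factors through $\pi_B\colon b \mapsto \varphi^b$ via a continuous function on $\tS^\cont_{\varphi^\op}(M) = \overline{\varphi^B}$.

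For the direction from the first condition to the second, I would fix $p \in \tS^\cont_\varphi(M) = \overline{\varphi_A}$ and note that $dp(b) = \overline{\varphi}(p,\varphi^b)$ for every $b \in B$. Since $\overline{\varphi}$ is separately continuous, $\overline{\varphi}(p,\cdot)$ is a continuous function on $\overline{\varphi^B}$, and its composition with $\pi_B$ is precisely $dp$; hence $dp$ is a continuous $\varphi^\op$-predicate and $p$ is $\varphi^\op$-definable. The symmetric argument shows every $q \in \tS^\cont_{\varphi^\op}(M)$ is $\varphi$-definable, and symmetry is immediate, since $dp(q) = \overline{\varphi}(p,q) = dq(p)$.

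For the converse, starting from the second condition I would define $\overline{\varphi}\colon \overline{\varphi_A} \times \overline{\varphi^B} \rightarrow \bR$ by $\overline{\varphi}(p,q) = dp(q)$. Because each $p$ is $\varphi^\op$-definable, $dp$ is continuous on $\overline{\varphi^B}$, so this is continuous in $q$; dually, $dq(p)$ is continuous in $p$, and the symmetry hypothesis $dp(q) = dq(p)$ guarantees that the two prescriptions coincide, yielding a single separately continuous function. Evaluating at realised types gives $\overline{\varphi}(\varphi_a,\varphi^b) = \varphi(a,b)$, so $\varphi$ factors through $\overline{\varphi}$, as required.

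There is no genuine obstacle here; the one point deserving care is the bookkeeping around the continuous notion of definability. In the affine lemma it is the affineness of the separately continuous extension that forces $dp$ to lie in $\cA\bigl(\tS^\aff_{\varphi^\op}(M)\bigr)$, i.e.\ to be an \emph{affine} $\varphi^\op$-predicate; in the present setting that affineness constraint simply disappears, and the continuity of $\overline{\varphi}(p,\cdot)$ on $\overline{\varphi^B}$ is exactly what lands $dp$ in $C\bigl(\tS^\cont_{\varphi^\op}(M)\bigr)$. One must therefore only make sure the continuous version of \autoref{dfn:DefinitionOverModel} is read with $\tS^\cont$ in place of $\tS^\aff$ and with plain continuity in place of affine definability, after which the argument of the preceding lemma transfers without change.
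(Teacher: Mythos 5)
Your proof is correct and is exactly the argument the paper intends: the lemma is stated without proof precisely because it is the verbatim transfer of the preceding affine lemma's proof, with $\cco(\varphi_A)$, $\cco(\varphi^B)$ replaced by $\overline{\varphi_A}$, $\overline{\varphi^B}$ and affineness dropped, which is what you do. Your remark about reading the continuous notion of definability (continuity on $\tS^\cont_{\varphi^\op}(M)$ in place of membership in $\cA\bigl(\tS^\aff_{\varphi^\op}(M)\bigr)$) is the right bookkeeping point and matches the paper's conventions.
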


\begin{dfn}
  Let $M$ be an $\cL$-structure.
  An $\cL$-formula $\varphi(x,y)$ is \emph{stable in $M$} if for every two sequences $(a_n:n\in\bN) \subseteq M^x$ and $(b_n:n\in\bN) \subseteq M^y$, the equality
  \begin{gather*}
    \lim_n \lim_m \varphi(a_n,b_m) = \lim_m \lim_n \varphi(a_n,b_m)
  \end{gather*}
  holds whenever all the limits exist.
\end{dfn}

The following is now an immediate corollary (even, a restatement) of \autoref{thm:DoubleLimitTopological}.
The novelty here is of course the affine case, but even in the continuous setting, the equivalence between stability in a model and separate mean-definability does not seem to have been observed in the literature.

\begin{thm}[Stability in a model and definability of types]
  \label{thm:DoubleLimitFormula}
  Let $M$ be an $\cL$-structure.
  The following are equivalent for a continuous logic $\cL$-formula $\varphi(x,y)$:
  \begin{enumerate}
  \item\label{item:DLF-stable}
    The formula $\varphi$ is stable in $M$.
  \item\label{item:DLF-def-and-sym-cont}
    Every $p \in \tS^\cont_\varphi(M)$ is $\varphi^\op$-definable, every $q \in \tS^\cont_{\varphi^\op}(M)$ is $\varphi$-definable, and moreover, the symmetry property $dp(q) = dq(p)$ holds.
  \item\label{item:DLF-mean-def-cont}
    Every $p \in \tS^\cont_\varphi(M)$ is mean-definable, and every $q \in \tS^\cont_{\varphi^\op}(M)$ is mean-definable.
  \end{enumerate}
  If moreover $\varphi(x,y)$ is an affine formula, then the previous conditions are further equivalent to the following:
  \begin{enumerate}[resume]
  \item\label{item:DLF-def-and-sym-aff}
    Every $p \in \tS^\aff_\varphi(M)$ is affinely $\varphi^\op$-definable, every $q \in \tS^\aff_{\varphi^\op}(M)$ is affinely $\varphi$-definable, and moreover, the symmetry property $dp(q) = dq(p)$ holds.
  \item\label{item:DLF-mean-def-aff}
    Every $p \in \tS^\aff_\varphi(M)$ is mean-definable, and every $q \in \tS^\aff_{\varphi^\op}(M)$ is mean-definable.
  \end{enumerate}
\end{thm}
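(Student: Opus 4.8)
The plan is to read off every clause of the statement as a translation of \autoref{thm:DoubleLimitTopological}, using the identifications established earlier in this section. Throughout I would put $A = M^x$, $B = M^y$, and regard $\varphi$ as a function $A \times B \to \bI$. By definition, $\varphi$ is stable in $M$ precisely when it has the double limit property, so condition \autoref{item:DLF-stable} is literally \autoref{thm:DoubleLimitTopological}\autoref{item:DoubleLimit}. It then remains to match each of the other clauses with one of the four remaining conditions of \autoref{thm:DoubleLimitTopological}, after which the theorem follows at once from the equivalences proved there.

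For the continuous clauses I would use the identifications $\tS^\cont_\varphi(M) = \overline{\varphi_A}$ and $\tS^\cont_{\varphi^\op}(M) = \overline{\varphi^B}$. Clause \autoref{item:DLF-def-and-sym-cont} is, verbatim, the model-theoretic side of \autoref{lem:separate-continuity-translation-cont}, whose functional-analytic side is the existence of a separately continuous extension $\overline{\varphi}\colon \overline{\varphi_A} \times \overline{\varphi^B} \to \bR$, that is, \autoref{thm:DoubleLimitTopological}\autoref{item:DoubleLimitClosure}. For clause \autoref{item:DLF-mean-def-cont} I would unwind \autoref{dfn:DefinitionOverModel}: a type $p$ is \emph{mean-definable} exactly when $dp$ is a mean $\varphi^\op$-predicate, and since $dp = p$ as an element of $\bI^B$, this says precisely that $p$ lies in the uniform closure of $\co(\varphi_A)$, i.e.\ that $p$ is \emph{$\varphi$-mean-definable} in the sense of \autoref{dfn:MeanDefinable}; dually, $q \in \tS^\cont_{\varphi^\op}(M)$ is mean-definable exactly when it is $\varphi^\op$-mean-definable. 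Thus clause \autoref{item:DLF-mean-def-cont} is \autoref{thm:DoubleLimitTopological}\autoref{item:DoubleLimitDefinableClosure}.

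For the affine clauses, available only when $\varphi$ is affine, I would proceed identically, now with the identifications $\tS^\aff_\varphi(M) = \cco(\varphi_A)$ and $\tS^\aff_{\varphi^\op}(M) = \cco(\varphi^B)$. Clause \autoref{item:DLF-def-and-sym-aff} is the model-theoretic side of the affine separate-continuity lemma preceding \autoref{lem:separate-continuity-translation-cont}, whose functional-analytic side is \autoref{thm:DoubleLimitTopological}\autoref{item:DoubleLimitCCO}; and clause \autoref{item:DLF-mean-def-aff}, by the same unwinding of \autoref{dfn:DefinitionOverModel} but now over the larger sets $\cco(\varphi_A)$ and $\cco(\varphi^B)$, is \autoref{thm:DoubleLimitTopological}\autoref{item:DoubleLimitDefinableCCO}. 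Since \autoref{thm:DoubleLimitTopological} asserts that all of its conditions are equivalent, all of the conditions listed in the present theorem are equivalent as well.

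The argument is essentially bookkeeping, so there is no single deep step; the one point requiring care is the tracking of opposites in the translation of \autoref{dfn:DefinitionOverModel}. One must verify that being \emph{mean-definable} (defined through $\varphi^\op$-predicates in the definition $dp$ of the type) corresponds to being \emph{$\varphi$-mean-definable} (defined through membership in the uniform closure of $\co(\varphi_A)$), and symmetrically on the $q$ side — in other words, that the transposition between the predicate naming the definition and the set whose convex hull is taken comes out correctly. Once that single matching is confirmed, the continuous and affine cases are dispatched by the same argument word for word.
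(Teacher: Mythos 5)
Your proposal is correct and matches the paper's approach exactly: the paper presents \autoref{thm:DoubleLimitFormula} as an immediate corollary (indeed, a restatement) of \autoref{thm:DoubleLimitTopological}, obtained via the identifications $\tS^\cont_\varphi(M) = \overline{\varphi_A}$, $\tS^\aff_\varphi(M) = \cco(\varphi_A)$ and the translation lemmas, just as you describe. Your careful unwinding of the $\varphi$/$\varphi^\op$ transposition in the mean-definability clauses is the right point to check, and you resolve it correctly.
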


\begin{rmk}
  The symmetry property is necessary in conditions \autoref{item:DLF-def-and-sym-cont} and \autoref{item:DLF-def-and-sym-aff}.
  For example, in the structure $(\bN,\leq)$, define $\varphi(a,b)=1$ if $a \leq b$ and $\varphi(a,b)=0$ otherwise.
  Then the only non-realised continuous $\varphi$-type over $\bN$ is $\varphi^\op$-definable (by the constant function 0), the only non-realised continuous $\varphi^\op$-type over $\bN$ is $\varphi$-definable (by the constant function 1), and the realised types are definable as well.
  It follows that all affine $\varphi$-types and $\varphi^\op$-types are affinely definable, since every such type can be obtained as a countable convex combination of continuous types.

  Similarly, mean-definability on both sides is necessary in conditions \autoref{item:DLF-mean-def-cont} and~\autoref{item:DLF-mean-def-aff}.
  Indeed, repeat the previous construction in the structure $(\bN\cup\{\infty\},\leq)$.
  Then every continuous $\varphi$-type over $\bN\cup\{\infty\}$ is realised, and in particular mean-definable.
  It follows that every affine $\varphi$-type over $\bN\cup\{\infty\}$ is mean-definable.
\end{rmk}

\begin{rmk}
  In the case of continuous logic (so in particular, in the case of classical \textit{True}/\textit{False} logic), the argument of \autoref{thm:DoubleLimitTopological} offers a very short and simultaneous proof of Shelah's fundamental theorem that for a stable formula, every type over a model is definable, together with Harrington's symmetry lemma (i.e., the implication \autoref{item:DLF-stable} $\Longrightarrow$ \autoref{item:DLF-def-and-sym-cont} in the previous theorem).
  Indeed, given $p \in \tS^\cont_\varphi(M)$ and $q \in \tS^\cont_{\varphi^\op}(M)$, one may expand the language with predicates for $dp$ and $dq$, then use Löwenheim--Skolem to replace $M$ with a separable elementary substructure in the expanded language (or in a countable sublanguage containing $\varphi$, $dp$ and $dq$).
  We may then take \emph{sequences} $a_n \in M^x$ and $b_m \in M^y$ whose types converge to $p$ and to $q$, and set $\overline{\varphi}(p,q) = \lim_n \varphi(a_n,y)^q = \lim_m \varphi(x,b_m)^p$, which is well-defined (by the stability of $\varphi$) and separately continuous.
  We  conclude by the straightforward \autoref{lem:separate-continuity-translation-cont}.
  Note that the argument does not make use of Grothendieck's theorem invoked in \cite{BenYaacov:Grothendieck}.
\end{rmk}

Let us now consider several formulas simultaneously.
First of all, we may extend \autoref{dfn:PhiType}.

\begin{dfn}
  \label{dfn:FamilyPhiType}
  Let $x$ a tuple of variables, and consider a family $\Phi$ of formulas (in the very general sense of \autoref{dfn:PhiType}) $\varphi(x,y_\varphi)$, allowing the variables $y_\varphi$ to vary with $\varphi$.
  Given $a \in M^x$ and $B \subseteq M$, we define the \emph{$\Phi$-type} $p = \tp_\Phi(a/B)$ as the union of all $\tp_\varphi(a/B)$ for $\varphi \in \Phi$.
  In particular, $\tp_{\{\varphi\}}(a/B) = \tp_\varphi(a/B)$.
\end{dfn}

Here, we are going to assume that $\Phi$ consists of affine $\cL$-formulas.
As expected, given an $\cL$-structure $M$, we define $\tS^\aff_\Phi(M)$, the space of \emph{affine $\Phi$-types} over $M$, as the collection of all $\Phi$-types realised in affine extensions of $M$.
As for a single formula, $\tp_\Phi(a/M)$ is the restriction of $\tp^\aff(a/M)$ to $M$-instances of formulas in $\Phi$, so the space $\tS^\aff_\Phi(M)$ is a topological affine quotient of $\tS^\aff_x(M)$, and the functions $p \mapsto \varphi(x,b)^p$, for $\varphi \in \Phi$ and $b \in M^{y_\varphi}$, together with the constant functions, generate a dense subspace of $\cA\bigl( \tS^\aff_\Phi(M) \bigr)$.
When $\Phi = \{\varphi\}$ is a singleton, the space $\tS^\aff_\Phi(M)$ coincides with $\tS^\aff_\varphi(M)$.
When $\Phi$ contains all affine formulas of the form $\varphi(x,y)$, with $x$ fixed (or a generating set of a dense subset), the space $\tS^\aff_\Phi(M)$ coincides with $\tS^\aff_x(M)$.

Expanding on our treatment of a single formula, let $B_\varphi = M^{y_\varphi}$.
If $p = \tp_\Phi(a/M) \in \tS^\aff_\Phi(M)$, then for every $\varphi \in \Phi$ we have a \emph{$\varphi$-definition} $d_\varphi p \colon B_\varphi \rightarrow \bR$, sending $b \mapsto \varphi(a,b) = \varphi(x,b)^p$.
Letting $p$ vary, this gives rise to the pseudometric
\begin{gather}
  \label{eq:FamilyPhiTypeDistance}
  \partial_\varphi(p,q) = \|d_\varphi p - d_\varphi q\| = \sup_{b \in M^{y_\varphi}} \, |\varphi(x,b)^p - \varphi(x,b)^q|.
\end{gather}
When $\Phi = \{\varphi\}$ is a singleton, this is the distance function defined in \autoref{eq:PhiTypeDistance}.
When $\Phi = \{\varphi_i : i < k\}$ is finite, all reasonable manners to combine these into a single distance function, such as $\sum_i \partial_{\varphi_i}$ or $\bigvee_i \partial_{\varphi_i}$, yield equivalent metric structures.

If $\Phi_0 \subseteq \Phi_1$, then the restriction map $\tS^\aff_{\Phi_1}(M) \rightarrow \tS^\aff_{\Phi_0}(M)$ is a continuous affine quotient map.
We may then identify $\tS^\aff_\Phi(M)$ with the inverse limit
\begin{gather*}
  \tS^\aff_\Phi(M) = \projlim_{\Phi_0 \subseteq \Phi \ \text{finite}} \tS^\aff_{\Phi_0}(M).
\end{gather*}

In light of the previous paragraph, let us consider the special case where $\Phi = \bigl\{ \varphi_i(x,y_i) : i < k \bigr\}$.
Let $B_i = M^{y_i}$ and $B = \coprod_i B_i$ (and, as earlier, $A = M^x$).
Since $\Phi$ is finite, we may also find a compact interval $\bI \subseteq \bR$ that contains all possible values of the formulas $\varphi_i$.
The interpretation of each formula is a function $\varphi_i\colon A \times B_i \rightarrow \bI$, which we may join together to a function $\varphi\colon A \times B \rightarrow \bI$, as in \autoref{lem:DoubleLimitGluing}.
In particular, $\varphi$ has the double limit property if and only if each $\varphi_i$ is stable in $M$.
Let us identify $\tS^\aff_{\varphi_i}(M)$ with $\cco\bigl((\varphi_i)_A\bigr)$.
If $a \in A = M^x$, then $\tp_\Phi(a/M)$ is identified with $\varphi_a = \bigl( (\varphi_i)_a : i < k \bigr)$.
Since the convex hull of realised types is dense in all affine types,
\begin{gather}
  \label{eq:SPhi-as-ccophiA}
  \tS^\aff_\Phi(M) = \cco(\varphi_A) \subseteq \prod_i \tS^\aff_{\varphi_i}(M) \subseteq \prod_i \bI^{B_i} = \bI^B.
\end{gather}
This identification is topological and affine.
It is isometric if we equip $\tS^\aff_\Phi(M)$ with the supremum metric $\bigvee_i \partial_{\varphi_i}$.
This, together with \autoref{lem:DoubleLimitGluing}, provides us with the following, essentially rephrasing \autoref{cor:DoubleLimitMazur}:

\begin{cor}
  \label{cor:StableMazur}
  Let $M$ be an $\cL$-structure and $\Phi = \bigl\{ \varphi_i(x,y_i) : i < k \bigr\}$ be a finite family of affine formulas, all stable in $M$.
  Then for every convex subset $C\subseteq \tS^\aff_\Phi(M)$, its topological closure agrees with its metric closure in the supremum distance $\bigvee_i \partial_{\varphi_i}$.
\end{cor}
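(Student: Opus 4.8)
The plan is to reduce the statement directly to \autoref{cor:DoubleLimitMazur} via the identification of $\tS^\aff_\Phi(M)$ with $\cco(\varphi_A)$ recorded in the paragraph preceding the corollary. Concretely, I would assemble the data into a single function. Setting $A = M^x$, $B_i = M^{y_i}$, $B = \coprod_i B_i$, and choosing a compact interval $\bI$ bounding all the $\varphi_i$ simultaneously, I form the glued function $\varphi = \coprod_i \varphi_i \colon A \times B \to \bI$ of \autoref{lem:DoubleLimitGluing}. Since each $\varphi_i$ is stable in $M$, that is, has the double limit property, \autoref{lem:DoubleLimitGluing} yields that $\varphi$ too has the double limit property.

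Next I would invoke the identification \autoref{eq:SPhi-as-ccophiA}, which realises $\tS^\aff_\Phi(M)$ as $\cco(\varphi_A) \subseteq \bI^B$ by an affine homeomorphism onto its image that is moreover isometric once $\tS^\aff_\Phi(M)$ is equipped with the supremum metric $\bigvee_i \partial_{\varphi_i}$ and $\bI^B$ with the uniform-convergence metric $\|\cdot\|_B$. Under such a bijection, convex sets correspond to convex sets, and both topological and metric closures are preserved. The one point worth spelling out is the match of metrics: for $f,g \in \bI^B = \prod_i \bI^{B_i}$ we have $\|f-g\|_B = \sup_{b\in B}|f(b)-g(b)| = \max_i \sup_{b\in B_i}|f(b)-g(b)|$, which is precisely $\bigvee_i \partial_{\varphi_i}$ on the corresponding types; this is exactly the isometry assertion made just above the corollary.

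Finally, given a convex $C \subseteq \tS^\aff_\Phi(M)$, I would transport it to a convex subset of $\cco(\varphi_A)$ and apply \autoref{cor:DoubleLimitMazur} — which is available precisely because $\varphi$ has the double limit property — to conclude that its metric and topological closures agree there. Transporting back through the identification gives the claim for $C$. I expect essentially no obstacle here: the corollary is a direct rephrasing of \autoref{cor:DoubleLimitMazur}, the only genuine bookkeeping being the verification that the supremum metric $\bigvee_i \partial_{\varphi_i}$ corresponds to uniform convergence on the disjoint union $B$, and that the identification \autoref{eq:SPhi-as-ccophiA} is simultaneously affine, topological, and isometric.
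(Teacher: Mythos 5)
Your proposal is correct and follows exactly the paper's argument: glue the $\varphi_i$ into $\varphi\colon A\times B\to\bI$, invoke \autoref{lem:DoubleLimitGluing} to get the double limit property, and transport \autoref{cor:DoubleLimitMazur} through the affine, topological, isometric identification \autoref{eq:SPhi-as-ccophiA}. The paper treats this as an immediate rephrasing and gives no further detail, so your bookkeeping of the metric match is, if anything, slightly more explicit than the source.
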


Let $\Aut(M)$ denote the automorphism group of $M$.
It acts naturally on $\tS^\aff_\Phi(M)$ through the parameters, i.e., $\varphi_i(x,b)^{gp} = \varphi_i(x,g^{-1} b)^p$.

\begin{cor}
  \label{cor:StableRyllNardzewski}
  Let $M$ be an $\cL$-structure and $\Phi$ be a family of affine formulas $\varphi(x,y_\varphi)$, all stable in $M$.
  Let $G \leq \Aut(M)$ be a group of automorphisms, and let $K \subseteq \tS^\aff_\Phi(M)$ be a non-empty compact convex set, invariant under the action of $G$.
  Then the action $G \curvearrowright K$ admits a fixed point.
\end{cor}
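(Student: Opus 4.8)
The plan is to deduce this from the abstract Ryll--Nardzewski statement \autoref{cor:DoubleLimitRyllNardzewski}, first for a finite family $\Phi$ and then, by a compactness argument over finite subfamilies, in general.

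Consider first $\Phi = \{\varphi_i(x,y_i) : i < k\}$. With $A = M^x$, $B_i = M^{y_i}$, $B = \coprod_i B_i$ and $\varphi = \coprod_i \varphi_i$, the identification \autoref{eq:SPhi-as-ccophiA} realises $\tS^\aff_\Phi(M)$ as $\cco(\varphi_A) \subseteq \bI^B$, topologically and affinely. Since each $\varphi_i$ is stable in $M$, \autoref{lem:DoubleLimitGluing} shows that $\varphi$ has the double limit property. The group $G \le \Aut(M)$ acts coordinate-wise on $A$ and on each $B_i$, hence on $B$, and leaves $\varphi$ invariant since automorphisms preserve the interpretation of formulas, $\varphi(ga,gb) = \varphi(a,b)$. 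Moreover the induced action $G \curvearrowright \cco(\varphi_A)$ agrees with the parameter action $G \curvearrowright \tS^\aff_\Phi(M)$: indeed $g\varphi_a = \varphi_{ga}$, matching $\varphi_i(x,b)^{gp} = \varphi_i(x,g^{-1}b)^p$. As $\tS^\aff_\Phi(M)$ is compact, $K$ is a non-empty closed convex $G$-invariant subset, and \autoref{cor:DoubleLimitRyllNardzewski} yields a $G$-fixed point in $K$.

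For arbitrary $\Phi$ I would use the inverse limit presentation $\tS^\aff_\Phi(M) = \projlim_{\Phi_0} \tS^\aff_{\Phi_0}(M)$ over finite $\Phi_0 \subseteq \Phi$, whose restriction maps $\pi_{\Phi_0}$ are continuous, affine and $G$-equivariant. For each finite $\Phi_0$ the image $K_{\Phi_0} = \pi_{\Phi_0}(K)$ is a non-empty compact convex $G$-invariant subset, so by the finite case its set $F_{\Phi_0}$ of $G$-fixed points is non-empty (and closed). Put $C_{\Phi_0} = \pi_{\Phi_0}^{-1}(F_{\Phi_0}) \cap K$, a closed, non-empty subset of $K$, the latter because every fixed point of $K_{\Phi_0}$ lifts to $K$. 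These sets have the finite intersection property: given $\Phi_0,\dots,\Phi_{m-1}$, their union $\Phi'$ is still finite, a fixed point $r \in F_{\Phi'}$ lifts to some $p \in K$, and the image of $r$ under each further restriction $\tS^\aff_{\Phi'}(M) \to \tS^\aff_{\Phi_j}(M)$ is $G$-fixed by equivariance, so $p \in \bigcap_j C_{\Phi_j}$. By compactness of $K$ the intersection $\bigcap_{\Phi_0} C_{\Phi_0}$ is non-empty, and since the $\pi_{\Phi_0}$ jointly separate points and are equivariant, any $p$ in it is $G$-fixed.

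The main obstacle is precisely this passage from finite to infinite $\Phi$: infinitely many formulas need not take values in a single compact interval, so \autoref{cor:DoubleLimitRyllNardzewski} does not apply directly, and one must instead work finite-subfamily by finite-subfamily. The only delicate point there is the finite intersection property, which rests on the trivial but essential fact that a finite union of finite subfamilies is finite.
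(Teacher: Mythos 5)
Your proposal is correct and follows essentially the same route as the paper: the finite case via the identification \autoref{eq:SPhi-as-ccophiA}, \autoref{lem:DoubleLimitGluing} and \autoref{cor:DoubleLimitRyllNardzewski}, then the general case by the inverse limit over finite subfamilies (the paper phrases your finite-intersection argument as taking a cluster point of the net $(p_\Psi)$, which is the same thing). The verification that the parameter action on $\tS^\aff_\Phi(M)$ matches the induced action on $\cco(\varphi_A)$ is a worthwhile detail that the paper leaves implicit.
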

\begin{proof}
  When $\Phi$ is finite, this follows from the identification \autoref{eq:SPhi-as-ccophiA}, \autoref{lem:DoubleLimitGluing}, and \autoref{cor:DoubleLimitRyllNardzewski}.
  In the general case, for each finite $\Psi \subseteq \Phi$, the projection $\pi_\Psi\colon \tS^\aff_\Phi(M)\to \tS^\aff_\Psi(M)$ is continuous, affine, and $G$-equivariant.
  Therefore, by the finite case, there exists $p_\Psi \in K$ such that $\pi_\Psi(p_\Psi)$ is a fixed point in $\pi_\Psi(K)$.
  Any cluster point of the net $(p_\Psi : \Psi \subseteq \Phi\ \text{finite})$ is a fixed point for the action of $G$ on~$K$.
\end{proof}

\section{Preservation under direct integrals}
\label{sec:DirectIntegrals}

In what follows, we will show that stability of an affine formula in a structure is preserved under \emph{direct integrals}, as introduced in \cite[\textsection8]{BenYaacov-Ibarlucia-Tsankov:AffineLogic}.
Let us recall the idea of the construction, and refer to the given reference for more details and discussion.

Let $(\Omega,\cB,\mu)$ be a probability space.
A \emph{measurable field} of structures, based on $\Omega$, is a family $M_\Omega = (M_\omega : \omega\in\Omega)$ of $\cL$-structures together with a tuple $e_I$ of sections $e_i\colon \Omega \to \coprod_{\omega\in\Omega}M_\omega$, $i\in I$, satisfying the following properties:
\begin{itemize}
\item The tuple $e_I$ is a \emph{pointwise enumeration} of the field, meaning that $\{e_i(\omega) : i\in I\}$ is dense in $M_\omega$ for every $\omega\in\Omega$.
\item For every predicate symbol $P$ and function symbol $F$, and for every $i\in I$ and every subtuple $\bar e$ of $e_I$ of the appropriate length/sort, the functions $\omega\mapsto P^{M_\omega}(\bar e(\omega))$ and $\omega\mapsto d(F^{M_\omega}(\bar e(\omega)),e_i(\omega))$ are $\cB$-measurable.
\item Given a subset $I_0\subseteq I$, denote $M_{\omega,I_0} = \overline{\{e_i(\omega) : i\in I_0\}} \subseteq M_\omega$; given a sublanguage $\cL_0\subseteq\cL$, denote by $\fI^\aff(M_\Omega,e_I,\cL_0)$ the collection of all countable subsets $I_0\in \cP_{\aleph_0}(I)$ such that $M_{\omega,I_0} \preceq^\aff_{\cL_0} M_\omega$ for $\mu$-almost every $\omega\in\Omega$.
  Then for every finite (equivalently, countable) sublanguage $\cL_0\subseteq\cL$, the collection $\fI^\aff(M_\Omega,e_I,\cL_0)$ is cofinal in $\cP_{\aleph_0}(I)$.
\end{itemize}
Note that if the index set $I$ is countable, then the last point holds automatically.

If $(M_\Omega,e_I)$ is a measurable field, then a section $f\colon\Omega\to \coprod_{\omega\in\Omega}M_\omega$ is a \emph{measurable section} if it is an almost sure pointwise limit of \emph{simple sections}, i.e., sections of the form $\omega\mapsto e_{k(\omega)}(\omega)$ for a measurable function $k\colon\Omega\to I$ with finite range.
A measurable section is $I_0$-measurable for a subset $I_0\subseteq I$ if for almost every $\omega\in\Omega$, $f(\omega)\in M_{\omega,I_0}$.
We denote the collection of all $I_0$-measurable sections, up to equality almost everywhere, by $M_{\Omega,I_0}$.
The set $M_{\Omega,I}$ of all measurable sections is the \emph{direct integral} of the measurable field $(M_\Omega,e_I)$, and we also denote it by
\begin{gather*}
  M_{\Omega,I} = \int_\Omega^\oplus M_\omega d\mu(\omega).
\end{gather*}
The direct integral, with the induced $L^1$ metric, becomes an $\cL$-structure by interpreting the function symbols pointwise and by defining the value of predicate symbols by integration.
Furthermore, as per \cite[Thm.~8.11]{BenYaacov-Ibarlucia-Tsankov:AffineLogic}, if $f$ is any $x$-tuple of measurable sections and $\psi(x)$ is an affine formula, then the map $\omega\mapsto \psi(f(\omega))$ is $\mu$-measurable (i.e., measurable for the $\mu$-completion of $\cB$), and moreover:
\begin{gather}\label{eq:Los}
  \psi^{M_{\Omega,I}}(f) = \int_\Omega \psi^{M_\omega}(f(\omega)) d\mu(\omega).
\end{gather}

When all structures $M_\omega$ of the family $M_\Omega$ are equal to a given structure $M$, there is a canonical choice of pointwise enumeration $e_I$, given by the constant sections.
The direct integral of the corresponding measurable field is the \emph{direct multiple} of $M$ by $(\Omega,\cB,\mu)$, and is denoted by $L^1(\Omega,M)$.
See \cite[Def.~8.15]{BenYaacov-Ibarlucia-Tsankov:AffineLogic}.

\begin{rmk}\label{rmk:(Omega,B',mu)}
  Let $(M_\Omega,e_I)$ be a measurable field of $\cL$-structures based on the probability space $(\Omega,\cB,\mu)$.
  We may denote it more explicitly by $(M_\Omega,\cB,e_I)$, and its direct integral by $M_{\Omega,\cB,I}$.
  Suppose $\cB'\subseteq\cB$ is a $\sigma$-subalgebra of subsets of $\Omega$ such that $(M_\Omega,\cB',e_I)$ remains a measurable field, i.e., for every predicate symbol $P(x)$ and function symbol $F(x)$, and for every $x$-subtuple $\bar e$ of $e_I$ and every $i\in I$, the maps $\omega\mapsto P\big(\bar e(\omega)\big)$ and $\omega\mapsto d\big(e_i,F(\bar e(\omega))\big)$ are $\cB'$-measurable.
  Then the corresponding direct integral $M_{\Omega,\cB',I}$ is an affine substructure of $M_{\Omega,\cB,I}$.
  Indeed, for every $x$-tuple $f$ of $\cB'$-measurable sections and every formula $\psi(x)$, we have as in \autoref{eq:Los},
\begin{gather*}
\psi(f)^{M_{\Omega,\cB',I}} = \int_\Omega \psi(f(\omega)) d\mu(\omega) = \psi(f)^{M_{\Omega,\cB,I}}.
\end{gather*}
\end{rmk}

Let $(M_\Omega,e_I)$ be a measurable field, and let $p_\Omega = (p_\omega : \omega \in \Omega)$, where $p_\omega \in \tS^\aff_x(M_\omega)$.
Say that the family $p_\Omega$ is \emph{measurable} if for every affine $\cL$-formula $\varphi(x,y)$, and every family of sections $b \in M_{\Omega,I}^y$, the function $\omega \mapsto \varphi\bigl(x, b(\omega) \bigr)^{p_\omega}$ is measurable.
If $p \in \tS^\aff_x(M_{\Omega,I})$ and
\begin{gather}
  \label{eq:TypeDisintegration}
  \varphi(x,b)^p = \int_\Omega \varphi\bigl(x, b(\omega) \bigr)^{p_\omega} d\mu(\omega)
\end{gather}
for all $\varphi$ and $b$ as above, then we say that $p = \int_\Omega p_\omega \, d\mu$.
Similarly, for a fixed affine formula $\varphi(x,y)$, families of $p_\omega \in \tS^\aff_\varphi(M_\omega)$ and $p \in \tS^\aff_\varphi(M_{\Omega,I})$.

The following is a disintegration result for affine types over separable direct integrals.

\begin{prp}
  \label{prp:TypeDisintegration}
  Let $\cL$ be a countable language.
  Let $(M_\Omega,e_I)$ be a measurable field of $\cL$-structures, and assume that its direct integral $M_{\Omega,I}$ is separable.
  Then for every affine type $p \in \tS^\aff_x(M_{\Omega,I})$ there exists a measurable family of types $p_\omega \in \tS^\aff_x(M_\omega)$ such that $p = \int_\Omega p_\omega \, d\mu$.
  
  Consequently, if $\varphi(x,y)$ is a fixed affine $\cL$-formula, then for every affine $\varphi$-type $p \in \tS^\aff_\varphi(M_{\Omega,I})$ there exists a measurable family of types $p_\omega \in \tS^\aff_\varphi(M_\omega)$ such that $p = \int_\Omega p_\omega \, d\mu$.
\end{prp}
\begin{proof}
  Let $B = (b_k : k \in \bN)$ enumerate a dense set of sections in $M_{\Omega,I}$.
  Let us write $Y = (y_k : k \in \bN)$ for a corresponding tuple of variables.
  The map $t\colon \Omega \rightarrow \tS^\aff_Y(\cL)$ sending $\omega\mapsto \tp^\aff\bigl( B(\omega) \bigr)$ is measurable, and we may consider the pushforward measure $\tilde{\mu} = t_*\mu \in \cM\bigl( \tS^\aff_Y(\cL) \bigr)$, whose barycentre is $\tp^\aff(B)$.

  Let $a$ be a realisation of $p \in \tS^\aff_x(M_{\Omega,I})$ in an affine extension of $M_{\Omega,I}$.
  By \cite[Lemma~3.6]{BenYaacov-Ibarlucia-Tsankov:AffineLogic}, there exists $\nu\in \cM\bigl( \tS^\aff_{xY}(\cL) \bigr)$ with barycentre $\tp^\aff(aB)$ and such that $(\pi_Y)_*\nu = \tilde{\mu}$, where $\pi_Y\colon \tS^\aff_{xY}(\cL)\to \tS^\aff_Y(\cL)$ is the variable restriction map.
  Let also $\bE[\cdot | \pi_Y] \colon L^\infty\bigl( \tS^\aff_{xY}(\cL), \nu \bigr) \to L^\infty\bigl( \tS^\aff_Y(\cL), \tilde{\mu} \bigr)$ denote the conditional expectation operator induced by $\pi_Y$.

  Let $\cL^\aff_{xY}$ denote the order unit space of affine formulas in the variables $xY$ (in which only finitely many variables actually occur).
  The collection of instances $\varphi(x,B)$, for $\varphi \in \cL^\aff_{xY}$ is dense in the space of all affine formulas with parameters in $M_{\Omega,I}$.
  In addition,
  \begin{gather*}
    \varphi(x,B)^p
    = \int_{\tS^\aff_{xY}(\cL)} \varphi \, d\nu
    = \int_{\tS^\aff_Y(\cL)} \bE[\varphi | \pi_Y] \, d\tilde{\mu}
    = \int_\Omega \bE[\varphi | \pi_Y] \circ t \, d\mu.
  \end{gather*}
  The map $\cL^\aff_{xY} \to L^\infty(\Omega)$ sending $\varphi\mapsto \bE[ \varphi | \pi_Y] \circ t$ is a unital, positive, linear operator.
  Since $Y$ and $\cL$ are countable, the space $\cL^\aff_{xY}$ is separable.
  Therefore, possibly replacing $\Omega$ with a full measure subset, for each formula $\varphi \in \cL^\aff_{xY}$ we can find a measurable representative $P_\varphi\colon \Omega\to \bR$ for $\bE[ \varphi | \pi_Y] \circ t$ such that the map $\varphi \mapsto P_\varphi$ is linear, positive and unital.
  It follows that for every $\omega \in \Omega$, the map
  \begin{gather*}
    q_\omega\colon \cL^\aff_{xY} \rightarrow \bR,\quad \varphi \mapsto P_\varphi(\omega)
  \end{gather*}
  is a unital, positive, linear functional, i.e., a state.
  Moreover, if $\varphi \in \cL^\aff_Y \subseteq \cL^\aff_{xY}$, we can choose $P_\varphi$ to be the measurable function sending $\omega\to \varphi\bigl( B(\omega) \bigr)$.

  Let $\omega \in \Omega$.
  Then $q_\omega\in\tS^\aff_{xY}(\cL)$ is an affine type, and if $\varphi \in \cL^\aff_Y$, then $q_\omega(\varphi) = P_\varphi(\omega) = \varphi\bigl( B(\omega) \bigr)$, so $\pi_Y(q_\omega) = \tp^\aff\bigl( B(\omega) \bigr)$.
  In other words, $q_\omega\bigl( x, B(\omega) \bigr)$ is an affine type in $\tS^\aff_x\bigl( B(\omega) \bigr)$, which we may extend to a type $p_\omega \in\tS^\aff_x(M_\omega)$.

  By construction, for $\varphi \in \cL^\aff_{xY}$:
  \begin{gather*}
    \varphi(x,B)^p
    = \int_\Omega P_\varphi \, d\mu
    = \int_\Omega \varphi(x,Y)^{q_\omega} \, d\mu
    = \int_\Omega \varphi\bigl(x, B(\omega) \bigr)^{p_\omega} \, d\mu.
  \end{gather*}
  Equivalently, \autoref{eq:TypeDisintegration} holds for every affine $\varphi(x,y)$ and every $b \in B^y$.
  Since $B$ is dense in $M_{\Omega,I}$, for every $b\in M_{\Omega,I}^y$ the function $\omega\mapsto\varphi\bigl( x, b(\omega) \bigr)^{p_\omega}$ is $\mu$-measurable, as a pointwise limit of $\mu$-measurable functions (see \cite[Prop.~8.8]{BenYaacov-Ibarlucia-Tsankov:AffineLogic}), and the equation \autoref{eq:TypeDisintegration} of the statement holds.

  For the last part, let $\pi_\varphi \colon \tS^\aff_x(M_\omega) \rightarrow \tS^\aff_\varphi(M_\omega)$ denote the quotient maps.
  Find $\overline{p} \in \tS^\aff_x(M)$ such that $\pi_\varphi(\overline{p}) = p$, apply the main result, and let $p_\omega = \pi_\varphi(\overline{p}_\omega)$.
\end{proof}

\begin{lem}
  \label{lem:MeanDefinableIntegral}
  Let $\varphi(x,y)$ be an affine $\cL$-formula.
  Let $(\Omega,\cB,\mu)$ be a probability space with separable measure algebra, and let $(M_\Omega,e_I)$ be a measurable field of $\cL$-structures with $I$ countable.
  Suppose that for almost every $\omega\in\Omega$, every $\varphi$-type over $M_\omega$ is mean-definable.
  Then every $\varphi$-type over the direct integral $M_{\Omega,I}$ is mean-definable.
\end{lem}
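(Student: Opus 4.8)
The plan is to disintegrate the given $\varphi$-type over the direct integral and then integrate fibrewise mean-approximations; the only real work is to carry out these approximations uniformly and measurably in $\omega$. First I would reduce to a countable language: replacing $\cL$ by a countable sublanguage $\cL_0$ containing $\varphi$ changes neither the universe of the direct integral (determined by the metric and by $e_I$) nor the notion of a $\varphi$-type or its mean-definability, and since $I$ is countable the reduct $(M_\Omega,\cB,e_I)$ remains a measurable field. As $I$ is countable and the measure algebra is separable, $M_{\Omega,I}$ is separable, so \autoref{prp:TypeDisintegration} applies: given $p\in\tS^\aff_\varphi(M_{\Omega,I})$, write $p=\int_\Omega p_\omega\,d\mu$ for a measurable family $p_\omega\in\tS^\aff_\varphi(M_\omega)$. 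Because $e_I$ is a pointwise enumeration, $\{e_i(\omega):i\in I\}$ is dense in $M_\omega$ for every $\omega$, so the values of measurable sections exhaust $M_\omega$; this is exactly what lets us realise fibrewise parameters by genuine sections. The point of departure is the estimate, valid for any sections $a_1,\dots,a_N\in M_{\Omega,I}^x$ and obtained from \autoref{eq:Los}, the disintegration \autoref{eq:TypeDisintegration}, and $\bigl|\int\,\cdot\,d\mu\bigr|\le\int|\cdot|\,d\mu$,
\begin{gather*}
  \partial\Bigl(p,\tfrac1N\textstyle\sum_{j<N}\varphi(a_j,\cdot)\Bigr)\\
  \le \int_\Omega \sup_{c\in M_\omega^y}\Bigl|\,dp_\omega(c)-\tfrac1N\textstyle\sum_{j<N}\varphi(a_j(\omega),c)\,\Bigr|\,d\mu.
\end{gather*}
Thus it suffices, for each $\varepsilon>0$, to produce finitely many sections making the fibrewise supremum at most $\varepsilon$ off a set of arbitrarily small measure.

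The construction would go as follows. Let $\mathcal{D}$ be the countable set of $x$-tuples built from the sections $e_i$, and $\mathcal{D}'$ the analogous set of $y$-tuples; by pointwise enumeration their evaluations are dense in $M_\omega^x$ and $M_\omega^y$ at every $\omega$. For a tuple $\bar d=(d_1,\dots,d_n)\in\mathcal{D}^{\,n}$ put
\begin{gather*}
  g_{\bar d}(\omega)=\sup_{c\in M_\omega^y}\Bigl|\,dp_\omega(c)-\tfrac1n\textstyle\sum_{i<n}\varphi(d_i(\omega),c)\,\Bigr|\\
  =\sup_{b\in\mathcal{D}'}\Bigl|\,dp_\omega(b(\omega))-\tfrac1n\textstyle\sum_{i<n}\varphi(d_i(\omega),b(\omega))\,\Bigr|,
\end{gather*}
the second equality holding by continuity of $\varphi$. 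Since $\omega\mapsto dp_\omega(b(\omega))$ is measurable (measurability of the family $p_\Omega$) and the field structure is measurable, each $g_{\bar d}$ is a countable supremum of measurable functions, hence measurable. For almost every $\omega$ the type $p_\omega$ is mean-definable, so by density of $\mathcal{D}$ in $M_\omega^x$ and continuity of $\varphi$ there is some $\bar d$ with $g_{\bar d}(\omega)\le\varepsilon$; therefore $\bigcup_{\bar d}\{g_{\bar d}\le\varepsilon\}$ has full measure. Enumerating $\mathcal{D}=\{\bar d^{(k)}\}$ and setting $\Omega_k=\{g_{\bar d^{(k)}}\le\varepsilon\}$, continuity of measure yields $K$ with $\mu\bigl(\bigcup_{k\le K}\Omega_k\bigr)\ge 1-\delta$.

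The main obstacle is then to glue the witnesses $\bar d^{(k)}$, which have different lengths $n_k$, into a single equal-weight mean given by sections. I would disjointify, $E_k=\Omega_k\setminus\bigcup_{j<k}\Omega_j$ for $k\le K$ and $E_0=\Omega\setminus\bigcup_{k\le K}\Omega_k$, take $N$ a common multiple of $n_1,\dots,n_K$, and define sections $a_1,\dots,a_N\in M_{\Omega,I}^x$ by listing, on each $E_k$ with $k\ge 1$, every entry of $\bar d^{(k)}$ repeated $N/n_k$ times (and arbitrarily on $E_0$); these are measurable sections, being finite measurable-partition combinations of members of $\mathcal{D}$. On each $E_k$ with $k\ge 1$ the averaged mean reduces to $\tfrac1{n_k}\sum_{i<n_k}\varphi(d^{(k)}_i(\omega),\cdot)$, so the fibrewise supremum equals $g_{\bar d^{(k)}}(\omega)\le\varepsilon$, while on $E_0$ it is bounded by the diameter of $\bI$. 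Feeding this into the displayed estimate gives $\partial\bigl(p,\tfrac1N\sum_{j<N}\varphi(a_j,\cdot)\bigr)\le\varepsilon+\operatorname{diam}(\bI)\,\delta$. Since $\varepsilon$ and $\delta$ are arbitrary, $dp$ lies in the uniform closure of the means of $\varphi$-instances over $M_{\Omega,I}$, that is, $p$ is mean-definable. The delicate points, as indicated, are the measurability of the sets $\{g_{\bar d}\le\varepsilon\}$ and the length-uniformisation, both of which are handled above through the countable fibrewise-dense families $\mathcal{D},\mathcal{D}'$ and passage to a common multiple.
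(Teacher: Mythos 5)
Your proposal is correct and follows essentially the same route as the paper's proof: disintegrate $p$ via \autoref{prp:TypeDisintegration}, use the countable family of equal-weight means built from the pointwise enumeration $e_I$ to get measurable ``good'' sets covering $\Omega$ up to small measure, pass to a common multiple of the lengths, and glue the witnesses into piecewise-defined measurable sections. The only (welcome) extra care you take is the explicit reduction to a countable sublanguage and the explicit reduction of the fibrewise supremum to a countable dense set of parameter tuples, both of which the paper leaves implicit.
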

\begin{proof}
  Under the hypotheses, the direct integral $M_{\Omega,I}$ is separable.
  Let $p\in\tS^\aff_\varphi(M_{\Omega,I})$, and disintegrate it as $\int p_\omega \, d\mu$ as per \autoref{prp:TypeDisintegration}.

  Consider all formulas of the form $\eta(\bar{a},y) = \frac{1}{\ell} \sum_{j<\ell} \varphi(a_j,y)$ where the $a_j$ are any $x$-tuples whose coordinates come from the pointwise enumeration $e_I$, i.e., $a_j=(e_{i_{j,1}},\dots,e_{i_{j,|x|}})$.
  (In particular, for every $\omega\in\Omega$, the tuples of the form $a_j(\omega)$ are dense in $M_\omega^x$.)
  There are countable many of these formulas, so we may enumerate them as $\eta_k(\bar{a}^k,y)$.
  Define $\Omega_{\varepsilon,k}$ as the set of those $\omega\in\Omega$ such that, for all $b \in M_{\Omega,I}^y$, we have
  \begin{gather*}
    \bigl| \varphi\big(x,b(\omega)\big)^{p_\omega} - \eta_k\big(\bar{a}^k(\omega),b(\omega)\big) \bigr| \leq \varepsilon.
  \end{gather*}
  By the choice of $p_\omega$, these sets are $\mu$-measurable.
  Moreover, by hypothesis, almost every $p_\omega$ is mean-definable in $M_\omega$, so $\Omega = \bigcup_k \Omega_{\varepsilon,k}$ up to measure zero.
  Therefore, there exists $k_0$ such that $\mu\left(\bigcup_{k<k_0} \Omega_{\varepsilon,k} \right) > 1-\varepsilon$.
  We may assume that for $k < k_0$, $\eta_k(\bar{a}^k,y) = \frac{1}{\ell} \sum_{j<\ell} \varphi(a_{k,j},y)$, with a common $\ell$.
  For $j < \ell$, and fixing some arbitrary tuple of simple sections $a_0\in M_{\Omega,I}^x$, define
  \begin{gather*}
    \tilde{a}_j(\omega) =
    \begin{cases}
      a_{k,j}(\omega) & \text{if}\ \ k < k_0 \ \ \text{and} \ \ \omega \in \Omega_{\varepsilon,k} \setminus \bigcup_{k' < k} \Omega_{\varepsilon,k'} \\
      a_0(\omega) & \text{if}\ \ \omega \notin \bigcup_{k < k_0} \Omega_{\varepsilon,k}.
    \end{cases}
  \end{gather*}
  Then each $\tilde{a}_j$ is a tuple of simple sections, and
  \begin{gather*}
    \left| \varphi(x,b)^p - \frac{1}{\ell} \sum_{j<\ell} \varphi(\tilde{a}_j,b ) \right|
    \leq \varepsilon\bigl( 1 + 2\|\varphi\|_\infty\bigr)
  \end{gather*}
  for all $b \in M_{\Omega,I}^y$.
  This completes the proof.
\end{proof}

\begin{thm}
  \label{thm:stability-direct-integral}
  Let $\cL$ be a language, and let $\varphi(x,y)$ be an affine $\cL$-formula.
  Let $\Omega = (\Omega,\cB,\mu)$ be a probability space, and $(M_\Omega,e_I)$ a measurable field of $\cL$-structures.

  If $\varphi$ is stable in $M_\omega$ for almost every $\omega\in \Omega$, then it is stable in the direct integral $M_{\Omega,I} = \int^\oplus_\Omega M_\omega \, d\mu$.
\end{thm}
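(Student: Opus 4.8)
The plan is to reduce to the separable, countably-indexed setting handled by \autoref{lem:MeanDefinableIntegral} and then run the mean-definability characterisation of stability in both directions. Since stability in a structure is exactly the double limit property, which is witnessed by countable data, it suffices to fix two sequences $(a_n) \subseteq M_{\Omega,I}^x$ and $(b_m) \subseteq M_{\Omega,I}^y$ and verify the double limit equality for them. Each $a_n$ and each $b_m$ is an almost sure pointwise limit of simple sections, so all of them together involve only a countable set of indices from $I$ and a countably generated sub-$\sigma$-algebra of $\cB$. Fix also a countable sublanguage $\cL_0 \subseteq \cL$ containing the (finitely many) symbols of $\varphi$.

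First I would pass to a countable sub-field. Using the cofinality of $\fI^\aff(M_\Omega, e_I, \cL_0)$, I enlarge the indices coming from the sequences to a countable $I_0 \subseteq I$ such that $M_{\omega, I_0} \preceq^\aff_{\cL_0} M_\omega$ for almost every $\omega$; in particular the fibres $M_{\omega,I_0}$ are genuine $\cL_0$-substructures closed under the operations. Then let $\cB_0 \subseteq \cB$ be the countably generated sub-$\sigma$-algebra containing the sets used by the approximating simple sections, together with, for every $\cL_0$-symbol and every sub-tuple of $e_{I_0}$, the sets witnessing the measurability clauses of a measurable field. Since $I_0$ is countable, the remaining cofinality clause is automatic, so $(M_\Omega, \cB_0, e_{I_0})$ is a measurable field over $(\Omega, \cB_0, \mu)$ with countable index and separable measure algebra, and fibres $M_{\omega,I_0}$. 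As in \autoref{rmk:(Omega,B',mu)}, its direct integral, which I denote $N = M_{\Omega,\cB_0,I_0}$, computes $\varphi$ by the same integral formula as $M_{\Omega,\cB,I}$: the chosen sequences are $\cB_0$-measurable $I_0$-sections, hence lie in $N$, and by the elementarity $M_{\omega,I_0} \preceq^\aff_{\cL_0} M_\omega$ the values $\varphi(a_n,b_m)$ agree in $N$ and in $M_{\Omega,\cB,I}$. It therefore suffices to prove that $\varphi$ is stable in $N$.

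Stability is inherited by the fibres: since $M_{\omega,I_0} \preceq^\aff_{\cL_0} M_\omega$ and $\varphi \in \cL_0$, any failure of the double limit property for sequences in $M_{\omega,I_0}$ would be a failure in $M_\omega$, so $\varphi$ — and likewise $\varphi^\op$, the double limit property being symmetric in the two — is stable in $M_{\omega,I_0}$ for almost every $\omega$. By \autoref{thm:DoubleLimitFormula}, for almost every $\omega$ every $\varphi$-type and every $\varphi^\op$-type over $M_{\omega,I_0}$ is then mean-definable. Applying \autoref{lem:MeanDefinableIntegral} to the field $(M_\Omega, \cB_0, e_{I_0})$, once for $\varphi$ and once for $\varphi^\op$ (both applications legitimate, as $I_0$ is countable and the measure algebra is separable), I obtain that every $\varphi$-type and every $\varphi^\op$-type over $N$ is mean-definable. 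By the equivalence \autoref{item:DLF-mean-def-aff} $\Leftrightarrow$ \autoref{item:DLF-stable} in \autoref{thm:DoubleLimitFormula}, $\varphi$ is stable in $N$, and hence the double limit equality holds for $(a_n)$ and $(b_m)$ in $M_{\Omega,\cB,I}$ as well.

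I expect the main obstacle to be the bookkeeping in the reduction: arranging $I_0$ and $\cB_0$ so that $(M_\Omega, \cB_0, e_{I_0})$ is simultaneously a measurable field (fibres closed under the operations, all relevant maps $\cB_0$-measurable), has separable measure algebra, captures the fixed sequences, and preserves the values of $\varphi$ when embedded into the original direct integral. Once this countably-indexed separable model is in place, the conclusion is a purely formal combination of \autoref{thm:DoubleLimitFormula} and \autoref{lem:MeanDefinableIntegral}, the only genuinely used feature being the symmetry of the double limit property between $\varphi$ and $\varphi^\op$.
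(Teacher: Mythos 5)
Your proposal is correct and follows essentially the same route as the paper: reduce to a countably-indexed sub-field over a countably generated sub-$\sigma$-algebra (so that \autoref{lem:MeanDefinableIntegral} applies), note that stability passes to the fibres $M_{\omega,I_0}$ by affine elementarity, and conclude via the mean-definability characterisation in \autoref{thm:DoubleLimitFormula}. The only cosmetic difference is that the paper phrases the reduction as ``stability in every separable affine substructure'' (using \autoref{rmk:(Omega,B',mu)} to realise such a substructure inside some $M^0_{\Omega,\cB_0,I_0}$) rather than via fixed witnessing sequences, and you are right --- and slightly more explicit than the paper --- that \autoref{lem:MeanDefinableIntegral} must be invoked for both $\varphi$ and $\varphi^{\op}$.
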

\begin{proof}
  Let $\cL_0\subseteq \cL$ be a countable sublanguage containing the symbols that appear in $\varphi$.
  It is enough to show that $\varphi$ is stable in every separable $\cL_0$-affine substructure $N\preceq_{\cL_0}^\aff M_{\Omega,I}$.
  Using the notation of \autoref{rmk:(Omega,B',mu)}, any such $N$ is contained in an $\cL_0$-affine substructure of $M_{\Omega,I}$ of the form $M^0_{\Omega,\cB_0,I_0}$, where $I_0\subseteq I$ is a countable subset, $\cB_0\subseteq \cB$ is a $\sigma$-subalgebra such that the probability algebra of $(\Omega,\cB_0,\mu)$ is separable, $M^0_\Omega$ denotes the field of $\cL_0$-structures $(M_{\omega,I_0} : \omega\in\Omega)$, and the triplet $(M^0_\Omega,\cB_0,e_{I_0})$ forms a measurable field.
  Indeed, \autoref{rmk:(Omega,B',mu)} ensures that $M^0_{\Omega,\cB_0,I_0} \preceq_{\cL_0}^\aff M^0_{\Omega,\cB,I_0} = M_{\Omega,I_0} \preceq^\aff_{\cL_0} M_{\Omega,I}$.  
  Hence, it is enough to see that $\varphi$ is stable in substructures of the form $M^0_{\Omega,\cB_0,I_0}$.

  Fix such $I_0$ and $\cB_0$.
  Since $M_{\omega,I_0}\preceq^\aff M_\omega$ for almost every $\omega\in\Omega$, the formula $\varphi$ is stable in $M_{\omega,I_0}$ for those $\omega$.
  Therefore, by \autoref{lem:MeanDefinableIntegral} and \autoref{thm:DoubleLimitFormula}\autoref{item:DLF-mean-def-aff}, $\varphi$ is stable in $M^0_{\Omega,\cB_0,I_0}$, as desired.
\end{proof}

\section{Stability in a theory}
\label{sec:StabilityInTheory}

\begin{dfn}
  \label{dfn:StabilityInTheory}
  Let $T$ be an affine theory and $\varphi(x,y)$ an affine formula.
  We say that $\varphi$ is \emph{stable in $T$} if it is stable in every model of $T$.

  If each affine formula $\varphi(x,y)$ is stable in $T$, then $T$ is \emph{affinely stable}.
\end{dfn}

The hypothesis that a formula is stable in a theory allows us to use the Compactness Theorem, and deduce various uniformity results.
The following is a typical example of this phenomenon.

\begin{thm}
  \label{thm:StableMeanDefinableUniformSimultaneous}
  Let $T$ be an affine theory, and let $\Phi = \bigl\{ \varphi_i(x,y_i) : i < k \bigr\}$ be a finite set of formulas, all stable in $T$.
  Then for every $\varepsilon > 0$ there exists $n\in\bN$ such that for every $M\vDash T$ and $p\in\tS^\aff_\Phi(M)$, there are $(a^j : j < n) \subseteq  M^x$ such that for every $i<k$ and $b\in M^{y_i}$,
  \begin{gather}
    \label{eq:StableMeanDefinableUniformSimultaneous}
    \left|\varphi_i(x,b)^p - \frac{1}{n}\sum_{j<n}\varphi_i(a^j,b)\right| < \varepsilon.
  \end{gather}
\end{thm}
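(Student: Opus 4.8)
The plan is to reduce the statement to a single \emph{glued} formula and then run a compactness argument, leveraging the fact (\autoref{thm:DoubleLimitFormula}) that mean-definability already holds inside every fixed model of $T$, and upgrading it to a bound uniform over all models and all types via an ultraproduct. First I would reformulate the conclusion. Setting $A = M^x$ and $B = \coprod_{i<k} M^{y_i}$, choosing a compact interval $\bI$ containing the values of all $\varphi_i$ (possible as $\Phi$ is finite), and gluing the $\varphi_i$ into a single $\varphi = \coprod_i \varphi_i \colon A \times B \rightarrow \bI$ as in \autoref{lem:DoubleLimitGluing}, the inequality \eqref{eq:StableMeanDefinableUniformSimultaneous}, holding for all $i<k$ and $b \in M^{y_i}$ with a \emph{common} tuple $(a^j : j<n)$, is exactly the assertion that $p$, viewed in $\tS^\aff_\Phi(M) = \cco(\varphi_A)$ via \eqref{eq:SPhi-as-ccophiA}, is approximated within $\varepsilon$ in the uniform norm on $\bI^B$ by the mean $\frac1n \sum_{j<n} \varphi_{a^j}$. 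Thus the statement says precisely that $\varphi$-mean-definability of $\Phi$-types holds with a number of points $n = n(\varepsilon)$ uniform over all $M \vDash T$ and all $p$. By \autoref{lem:DoubleLimitGluing} and stability of each $\varphi_i$ in $T$, $\varphi$ has the double limit property in every model of $T$.

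I would then argue by contradiction. Suppose that for some $\varepsilon_0 > 0$ no $n$ works: for each $n$ there are $M_n \vDash T$ and $p_n \in \tS^\aff_\Phi(M_n)$ admitting no good $n$-tuple. Realise $p_n$ by some $c_n \in N_n^x$ in an affine extension $N_n \succeq^\aff M_n$ (so $N_n \vDash T$). Fixing a non-principal ultrafilter $\cU$ on $\bN$, set $N_* = \prod_\cU N_n$, $M_* = \prod_\cU M_n \preceq^\aff N_*$, $c_* = [c_n]$, and $p_* = \tp_\Phi(c_*/M_*) \in \tS^\aff_\Phi(M_*)$. Since each $M_n \vDash T$, also $M_* \vDash T$, so each $\varphi_i$ is stable in $M_*$, and by \autoref{thm:DoubleLimitFormula}\autoref{item:DLF-mean-def-aff} (equivalently by \autoref{thm:DoubleLimitTopological}) the $\Phi$-type $p_*$ is $\varphi$-mean-definable: there are $m$ and $(d^l : l<m) \subseteq M_*^x$ with $\bigl|\varphi_i(c_*,b) - \frac1m \sum_{l<m} \varphi_i(d^l,b)\bigr| \le \varepsilon_0/2$ for all $i<k$ and all $b \in M_*^{y_i}$.

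Next I would lift and pad. Writing $d^l = [d^l_n]$ with $d^l_n \in M_n^x$, for each large $n$ I turn the $m$ points $d^l_n$ into $n$ points $(a^j : j<n)$ by assigning multiplicities $r_l$ with $\sum_l r_l = n$ and $|r_l/n - 1/m| \le 1/n$; the resulting mean differs from $\frac1m \sum_l \varphi_i(d^l_n, \cdot)$ by at most $\delta_n := m\,\|\varphi_i\|_\infty/n \to 0$. By the failure hypothesis for $M_n$, this padded tuple is not good, so there are $i_n < k$ and $b_n \in M_n^{y_{i_n}}$ with $\bigl|\varphi_{i_n}(c_n, b_n) - \frac1m \sum_l \varphi_{i_n}(d^l_n, b_n)\bigr| \ge \varepsilon_0 - \delta_n$. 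As $k$ is finite, some fixed $i$ occurs for $\cU$-almost all $n$; put $b_* = [b_n] \in M_*^{y_i}$. By Łoś, $\bigl|\varphi_i(c_*, b_*) - \frac1m \sum_l \varphi_i(d^l, b_*)\bigr| = \lim_\cU \bigl|\varphi_i(c_n,b_n) - \frac1m\sum_l \varphi_i(d^l_n, b_n)\bigr| \ge \varepsilon_0$, contradicting the bound $\le \varepsilon_0/2$ valid for every $b \in M_*^{y_i}$. This contradiction produces the desired $n$.

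The main obstacle to flag is that the supremum over $b$ is \emph{not} an affine formula, so one cannot simply transfer a uniform-in-$b$ bound from $M_*$ down to the $M_n$ by Łoś. The resolution is to run the argument in the opposite direction: assume the padded tuple fails in each $M_n$, realise the hypothetical bad witnesses $b_n$ as a single $b_* \in M_*$, and contradict mean-definability there. The secondary technical point is the passage from an $m$-point mean in $M_*$ to an exactly-$n$-point mean in $M_n$, handled by the multiplicity padding whose error $\delta_n$ is absorbed in the ultralimit.
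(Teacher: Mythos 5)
Your proposal is correct and follows essentially the same route as the paper: argue by contradiction, form the ultraproduct of the failing models, use stability to mean-define the ultraproduct type by an $m$-point mean of realised types (the paper invokes \autoref{cor:StableMazur}, which packages the gluing of \autoref{lem:DoubleLimitGluing} exactly as you do by hand), lift the approximating tuple to the factors, pad it to length $n$, and contradict the failure witnesses via Łoś. The only cosmetic difference is that the paper indexes the failure by $n!$ so the padding is an exact repetition, whereas you use multiplicities with a vanishing error $\delta_n$; both work.
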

\begin{proof}
  Assume not.
  Then there exists $\varepsilon > 0$, and for each $n \in \bN$ there exist $M_n \vDash T$ and $p_n \in \tp^\aff_\Phi(M_n)$ that witness failure for $n!$.
  We may assume that $p_n = \tp_\Phi(c_n/M_n)$, where $M_n \preceq^\aff N_n$ and $c_n \in N_n^x$.

  Let $\omega$ be a non-principal ultrafilter on $\bN$, and let $M_\omega$ and $N_\omega$ denote the respective ultraproducts.
  Then $M_\omega \preceq^\aff N_\omega$ are models of $T$.
  Let $c_\omega \in N_\omega^x$ be the class of $(c_n : n \geq 1)$, and let $p = \tp_\Phi(c_\omega/M_\omega)$.
  Then, by \autoref{cor:StableMazur}, there exist $m$ and a sequence $(a^j : j < m) \subseteq M_\omega^x$ such that \autoref{eq:StableMeanDefinableUniformSimultaneous} holds.

  Let $a^j$ be the class of $(a^j_n)$.
  For each $n \geq m$, we may repeat $n!/m$ times the sequence $(a^j_n : j < m)$, to obtain a sequence of length $n!$.
  Then by hypothesis, there exist $i_n$ and $b_n \in M_n^{y_i}$ witnessing that \autoref{eq:StableMeanDefinableUniformSimultaneous} fails for $p_n$.
  There exists a unique $i < k$ such that $A_i = \{n : n \geq m, \, i_n = i\} \in \omega$.
  We may replace $\bN$ with $A_i$, and define $b \in M_\omega^{y_i}$ to be the class of $(b_n : n \in A_i)$.
  By Łoś's Theorem (for ultraproducts, i.e., for continuous logic), \autoref{eq:StableMeanDefinableUniformSimultaneous} fails for $p$, a contradiction.
\end{proof}

We denote the density character of a metric space by $\fd$.
The density character of a language $\cL$ modulo an $\cL$-theory $T$ is $\fd(\cL) = \sup_x\fd(\cL_x^\aff)$, where $x$ ranges over finite tuples of variables, and $\cL_x^\aff$ is endowed with its natural norm (see \cite[Def.~3.22]{BenYaacov-Ibarlucia-Tsankov:AffineLogic}).

\begin{thm}
  \label{thm:stable-in-T-equivalences}
  Let $\varphi(x,y)$ be an affine formula, and $T$ an affine $\cL$-theory.
  Then the following are equivalent:
  \begin{enumerate}
  \item
    The formula $\varphi$ is stable in $T$.
  \item
    \label{item:thm:stable-in-T-equiv:Extremal}
    The formula $\varphi$ is stable in every extremal model of $T$.
  \item
    \label{item:thm:stable-in-T-equiv:UCCO}
    For every $M \vDash T$, every affine $\varphi$-type over $M$ is mean-definable.
    Equivalently, the convex hull of the realised affine $\varphi$-types over $M$ is metrically dense in $\tS^\aff_\varphi(M)$.
  \item
    \label{item:thm:stable-in-T-equiv:UAR}
    For every $M \vDash T$ and atomless standard probability space $\Omega$, the $\varphi$-types over $M$ realised in $L^1(\Omega,M)$ are metrically dense in $\tS^\aff_\varphi(M)$.
  \item
    For every $M \vDash T$, every $\varphi$-type over $M$ is definable.
  \item
    \label{item:thm:stable-in-T-equiv:DensityAll}
    For every $M \vDash T$, the metric density character of $\tS^\aff_\varphi(M)$ is at most that of $M$.
  \item
    \label{item:thm:stable-in-T-equiv:DensityExists}
    There exists $\kappa \geq \fd_T(\cL)$ such that for every $M \vDash T$ with $\fd(M) \leq \kappa$, the metric density character of $\tS^\aff_\varphi(M)$ is at most $\kappa$.
  \end{enumerate}
\end{thm}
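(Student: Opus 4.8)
The plan is to run one cycle through the ``analytic'' conditions $(1),(3),(4),(5),(6),(7)$ and to handle the equivalence with stability in the extremal models separately, since that is the single place where the direct integral theorem is needed. The core is mean-definability. I would first dispatch $(1)\Rightarrow(3)$: stability in $T$ is by definition stability in every $M\vDash T$, so \autoref{thm:DoubleLimitFormula}\autoref{item:DLF-mean-def-aff} gives mean-definability of every $\varphi$-type over every model; the ``equivalently'' clause is just the identification $\tS^\aff_\varphi(M)=\cco(\varphi_A)$ for $A=M^x$ together with \autoref{dfn:MeanDefinable}, since $p$ is mean-definable exactly when it lies in the uniform closure of $\co(\varphi_A)$. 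For $(3)\Leftrightarrow(4)$, the key observation is that for atomless $\Omega$ a finite mean $\frac1n\sum_{j<n}\varphi(a^j,\cdot)$ is the $\varphi$-definition of the type realised by the simple section equal to $a^j$ on a piece of measure $1/n$ (by \autoref{eq:Los}); hence the $\varphi$-types realised in $L^1(\Omega,M)$ and the set $\co(\varphi_A)$ have the same uniform closure, namely the mean-definable types, so metric density of the former equals that of the latter. Finally $(3)\Rightarrow(5)$ is immediate, a mean $\varphi^\op$-predicate being in particular an affine definable predicate.

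All the density bounds rest on the isometry $p\mapsto dp$ of \autoref{eq:PhiTypeDistance}. If every $\varphi$-type over $M$ is mean-definable, then $\tS^\aff_\varphi(M)$ is the metric closure of the rational convex hull of $\{\varphi_a:a\in M^x\}$, whose density character is at most $\fd(M)$, giving $(3)\Rightarrow(6)$; and $(6)\Rightarrow(7)$ follows by taking $\kappa=\fd_T(\cL)$. For $(5)\Rightarrow(7)$ the same isometry embeds $\tS^\aff_\varphi(M)$ into the space of affine definable predicates in $y$ over $M$, of density at most $\fd(M)+\fd_T(\cL)\le\kappa$ when $\kappa=\fd_T(\cL)$ and $\fd(M)\le\kappa$. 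The one substantial implication on this side is $(7)\Rightarrow(1)$, which I would prove contrapositively: if $\varphi$ is unstable in some model of $T$, the failure of the double limit property produces, after extracting subsequences and a Ramsey argument, two values $r<s$ and the order property, whose finite instances are expressed by sentences and hence realisable along an arbitrary linear order by Compactness. Given $\kappa\ge\fd_T(\cL)$, I would fix a linear order $L$ of size $\kappa$ with more than $\kappa$ Dedekind cuts (using the ZFC fact $\operatorname{ded}(\kappa)>\kappa$), take a model $M\vDash T$ of density $\le\kappa$ carrying the parameters indexed by $L$, and observe that distinct cuts yield $\varphi$-types at $\partial$-distance $\ge s-r$; this exhibits more than $\kappa$ metrically separated types, so the metric density character of $\tS^\aff_\varphi(M)$ exceeds $\kappa$, contradicting $(7)$. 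The cardinal bookkeeping around $\operatorname{ded}(\kappa)>\kappa$ is the only delicate point here.

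It remains to incorporate stability in the extremal models, which is the novelty. The implication $(1)\Rightarrow(2)$ is trivial, extremal models being models of $T$. For $(2)\Rightarrow(1)$ I would first record that stability of $\varphi$ in a structure depends only on its affine theory, since the order property of each fixed finite length is expressed by a sentence; thus it suffices, given $M\vDash T$, to produce an affinely elementarily equivalent structure in which $\varphi$ is stable. By the extremal (ergodic) decomposition of \cite{BenYaacov-Ibarlucia-Tsankov:AffineLogic}, $M$ is affinely elementarily equivalent to a direct integral $\int^\oplus_\Omega M_\omega\,d\mu$ of a measurable field of extremal models $M_\omega$. By $(2)$ each $M_\omega$ is stable, so \autoref{thm:stability-direct-integral} yields stability of $\varphi$ in the direct integral, and therefore in $M$. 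This step, where the preservation theorem of \autoref{sec:DirectIntegrals} reduces the verification of stability to the extremal models, is the principal obstacle and the whole reason the equivalence with $(2)$ holds; together with the cycle above it closes all seven conditions.
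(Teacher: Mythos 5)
Most of your cycle coincides with the paper's proof: $(1)\Rightarrow(3)\Rightarrow(5)$ via \autoref{thm:DoubleLimitFormula}, the passage through $L^1(\Omega,M)$ via simple sections, the density bounds via the isometry $p\mapsto dp$, and $(7)\Rightarrow(1)$ via the order property along a linear order of size $\kappa$ with more than $\kappa$ cuts (the paper realises $\mathrm{ded}(\kappa)>\kappa$ concretely as $2^\lambda$ with the lexicographic order, $\lambda$ least with $2^\lambda>\kappa$, and the eventually-zero sequences as the small dense suborder, then applies Downward Löwenheim--Skolem). The minor reroutings of the implication graph ($(3)\Leftrightarrow(4)$ instead of $(3)\Rightarrow(4)\Rightarrow(6)$) are harmless.

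The genuine gap is in $(2)\Rightarrow(1)$. You claim that stability of $\varphi$ in a structure depends only on its affine theory, ``since the order property of each fixed finite length is expressed by a sentence''. This is false, for two reasons. First, the length-$n$ order configuration is expressed by a \emph{continuous} sentence built from $\max/\min$, which are not affine connectives, so it need not transfer between affinely elementarily equivalent structures. Second, and more fundamentally, even full continuous elementary equivalence does not preserve stability in a structure: the paper notes explicitly that stability in a structure is not preserved under ultraproducts, and an ultrapower is elementarily equivalent to the original structure. The underlying point is that approximate finite order configurations in an equivalent structure do not yield the infinite sequences \emph{inside that structure} needed to violate the double limit property --- producing those requires saturation. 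Consequently, knowing that $M$ is affinely elementarily \emph{equivalent} to a direct integral of extremal models is not enough. What the paper uses instead is that $M$ embeds as an affine elementary \emph{substructure} into such a direct integral (\cite[Cor.~10.3]{BenYaacov-Ibarlucia-Tsankov:AffineLogic}). Stability does pass down to affine elementary substructures --- trivially, since the interpretation of the affine formula $\varphi$ on $M^x\times M^y$ is the restriction of its interpretation on the extension --- so combining that embedding with $(2)$ and \autoref{thm:stability-direct-integral} closes the implication. Your argument is fixable by this substitution, but the reduction step as written is wrong.
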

\begin{proof}
  \begin{cycprf}
  \item[\eqnext]
    The non-obvious implication follows from \autoref{thm:stability-direct-integral} and the fact that every model of an affine theory embeds affinely into a direct integral of extremal models (see \cite[Cor.~10.3]{BenYaacov-Ibarlucia-Tsankov:AffineLogic}).
  \item[\impnumnum{1}{3}{5}]
    By \autoref{thm:DoubleLimitFormula}.
  \item[\impnum{5}{7}]
    One can take $\kappa = \fd_T(\cL)$, noting that if $\fd(M)\leq \fd_T(\cL)$ then $\fd_T\bigl( \cL(M) \bigr) = \fd_T(\cL)$.
  \item[\impnum{3}{4}]
    Immediate.
  \item[\impnum{4}{6}]
    If $\Omega$ is a standard probability space, the density character of $L^1(\Omega,M)$ is at most that of $M$, and the map $a \mapsto \tp_\varphi(a/M)$ is uniformly continuous (in the metric on $\varphi$-types).
  \item
    Immediate.
  \item[\impfirst]
    This is the usual argument.
    Let $\lambda$ be the least cardinal such that $2^\lambda > \kappa$ (so $\lambda \leq \kappa$).
    Let $\bO = 2^\lambda = \{0,1\}^\lambda$, equipped with the lexicographic order.
    Let $\bO_0 \subseteq \bO$ consist of those sequences that are constantly zero from some point onward.
    Then $|\bO| = 2^\lambda > \kappa$, while $|\bO_0| = |2^{<\lambda}| \leq \lambda \kappa = \kappa$.

    If $\varphi$ is not stable in $T$, then by compactness we may find a model $N \vDash T$, real numbers $r < s$, and families of tuples $(a_i:i \in \bO) \subseteq N^x$ and $(b_j : j \in \bO) \subseteq N^y$ such that $\varphi(a_i,b_j) \leq r$ if $i < j$ and $\varphi(a_i,b_j) \geq s$ otherwise.
    By Downward Löwenheim-Skolem, there exists $M \preceq^\aff N$ such that $b_j \in M^y$ for all $j \in \bO_0$ and $\fd(M)\leq \kappa$.
    For $i \in \bO$, let $p_i = \tp_\varphi(a_i/M)$.
    If $i < j$ in $\bO$, then there exists $k \in \bO_0$ such that $i < k \leq j$, so $\varphi(a_i,b_k) \leq r$ and $\varphi(a_j,b_k) \geq s$.
    Therefore $\partial(p_i,p_j)\geq s-r$ for every distinct $i,j \in \bO$, showing that the density character of $\tS^\aff_\varphi(M)$ is $> \kappa$.
  \end{cycprf}
\end{proof}

We note that if $\Phi = \bigl\{ \varphi_i(x,y_i) : i < k\bigr\}$ is a finite set of affine formulas, all stable in an affine theory $T$, then condition \autoref{item:thm:stable-in-T-equiv:UAR} of \autoref{thm:stable-in-T-equivalences} holds for $\Phi$-types over models of $T$.
In other words,  every type $p\in\tS^\aff_\Phi(M)$ lies in the metric closure of those realised in $L^1(\Omega,M)$ for any atomless probability space $\Omega$ (in the metric $\bigvee_i \partial_{\varphi_i}$, or $\sum_i \partial_{\varphi_i}$, or any equivalent one).
Stated equivalently, for every $\varepsilon > 0$ there exists $a \in L^1(\Omega,M)$ such that $|\varphi_i(x,b)^p - \varphi_i(a,b)| < \varepsilon$ for every $i < k$ and $b \in M^{y_i}$.

Condition \autoref{item:thm:stable-in-T-equiv:Extremal} of \autoref{thm:stable-in-T-equivalences} allows us to connect stability in affine logic with stability in continuous logic.
We recall that the affine part of a continuous logic theory $\bT$, denoted $\bT_\aff$, is the set of affine conditions implied by $\bT$ (see \cite[\textsection13]{BenYaacov-Ibarlucia-Tsankov:AffineLogic}).

\begin{cor}
  \label{cor:preservation-by-affine-part}
  Let $\bT$ be a continuous logic theory and let $\varphi(x,y)$ be an affine formula.
  If $\varphi(x,y)$ is stable in (every model of) $\bT$, then it is stable in $\bT_\aff$.

  In particular, if a continuous logic theory is stable in continuous logic, its affine part is affinely stable.
\end{cor}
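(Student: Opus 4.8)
The plan is to apply \autoref{thm:stable-in-T-equivalences} to the affine theory $T = \bT_\aff$, using in particular the equivalence of stability in $T$ with stability in the extremal models of $T$ (condition \autoref{item:thm:stable-in-T-equiv:Extremal}). The first observation I would record is that ``stable in a structure $M$'' is a purely combinatorial statement about the double limit property of the interpretation of $\varphi$ (see \autoref{dfn:DoubleLimit} and the definition of stability in a model), and so it has the same meaning whether $M$ is viewed as a continuous $\cL$-structure or as its affine reduct. Consequently, the hypothesis that $\varphi$ is stable in every model of $\bT$ says precisely that $\varphi$ has the double limit property in (the affine reduct of) every $M \vDash \bT$.

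The crux of the argument is the identification of the extremal models of $\bT_\aff$. By the analysis of the affine part of a continuous theory in \cite[\textsection13]{BenYaacov-Ibarlucia-Tsankov:AffineLogic}, every extremal model of $\bT_\aff$ is (the affine reduct of) a model of $\bT$. Granting this, every extremal model of $T = \bT_\aff$ falls under the stability hypothesis, so $\varphi$ is stable in each of them. The implication \autoref{item:thm:stable-in-T-equiv:Extremal}$\Rightarrow$(1) of \autoref{thm:stable-in-T-equivalences} then yields that $\varphi$ is stable in $\bT_\aff$, which is the first assertion. It is worth emphasising that this implication is itself the nontrivial content of \autoref{thm:stable-in-T-equivalences}, resting on \autoref{thm:stability-direct-integral} and the fact that every model of an affine theory embeds affinely into a direct integral of its extremal models; the present corollary is thus essentially a specialisation of that machinery to $T = \bT_\aff$.

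The ``in particular'' clause follows formally. If $\bT$ is stable in continuous logic, then every affine formula $\varphi(x,y)$ is in particular a stable continuous formula, hence stable in every model of $\bT$; applying the first part to each such $\varphi$ shows that every affine formula is stable in $\bT_\aff$, which by \autoref{dfn:StabilityInTheory} is exactly the affine stability of $\bT_\aff$.

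I expect the single point requiring care to be the identification of the extremal models of $\bT_\aff$ with models of $\bT$: everything else is a direct invocation of \autoref{thm:stable-in-T-equivalences} together with the language-independence of the double limit property. Provided that identification is available from \cite{BenYaacov-Ibarlucia-Tsankov:AffineLogic} (and only the inclusion ``extremal models of $\bT_\aff$ are models of $\bT$'' is actually needed, not the converse), the proof is short and the hard analytic work has already been done in \autoref{sec:DirectIntegrals}.
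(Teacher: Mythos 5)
Your proposal follows essentially the same route as the paper: invoke \autoref{thm:stable-in-T-equivalences}, reduce to stability in the extremal models of $\bT_\aff$, and observe that stability in a structure is a language-independent double-limit statement. The one point where you diverge from what is actually available is precisely the point you flagged as requiring care: you assert that every extremal model of $\bT_\aff$ \emph{is} (the affine reduct of) a model of $\bT$, whereas the fact provided by \cite[Prop.~13.4]{BenYaacov-Ibarlucia-Tsankov:AffineLogic}, and used in the paper, is only that every extremal model of $\bT_\aff$ \emph{embeds affinely into} a model of $\bT$. These are not the same: an affine substructure of a model of $\bT$ agrees with it on affine sentences but need not satisfy $\bT$ itself, so your stronger identification is not justified (and is dubious in general). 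The gap is easily closed, however: if $M \preceq^\aff N$ and $\varphi$ is stable in $N$, then $\varphi$ is stable in $M$, since the witnessing sequences live in $M \subseteq N$ and the values of $\varphi$ on tuples from $M$ are the same computed in either structure. With that one-line observation substituted for your identification, your argument coincides with the paper's proof; the ``in particular'' clause is handled identically in both.
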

\begin{proof}
  By \cite[Prop.~13.4]{BenYaacov-Ibarlucia-Tsankov:AffineLogic}, every extremal model of $\bT_\aff$ embeds affinely into a model of $\bT$.
  Therefore if $\varphi(x,y)$ is stable in $\bT$, it is also stable in every extremal model of $\bT_\aff$.
  It follows from \autoref{thm:stable-in-T-equivalences} that $\varphi(x,y)$ is stable in $\bT_\aff$.
\end{proof}

\begin{qst}
  \label{qst:ContinuousVsAffineStability}
  Can a more direct argument be given for \autoref{cor:preservation-by-affine-part}?

  Let us elaborate on this.
  Let $\varphi(x,y)$ be a continuous formula, and $\varepsilon > 0$.
  Say that $\varphi$ is \emph{$\varepsilon$-stable} in $\bT$ if the following list of conditions cannot be realised jointly in a model of $\bT$, where $i,j,k,\ell$ vary in $\bN$:
  \begin{gather}
    \label{eq:ContinuousVsAffineStabilityCondition}
    \Bigl\{ \varphi(x_i,y_j) \geq \varphi(x_\ell,y_k) + \varepsilon : i < j, \, k < \ell \Bigr\}.
  \end{gather}
  It is easy to check that $\varphi$ is stable in $\bT$, i.e., in all its models, if and only if it is $\varepsilon$-stable in $\bT$ for every $\varepsilon > 0$.

  By a standard application of the Compactness Theorem for continuous logic, $\varphi$ is $\varepsilon$-stable in $\bT$ if and only if there exists $N_\varepsilon \in \bN$ such that
  \begin{gather}
    \label{eq:ContinuousVsAffineStabilityConditionStable}
    \bT \vDash \sup_{x,y} \bigwedge_{i<j < N_\varepsilon, \, k < \ell < N_\varepsilon} \, \Bigl( \varphi(x_i,y_j) - \varphi(x_\ell,y_k)\Bigr) < \varepsilon.
  \end{gather}

  If $\varphi$ is an affine formula, and $T = \bT$ an affine theory, then we may apply the Compactness Theorem for affine logic to \autoref{eq:ContinuousVsAffineStabilityCondition}.
  It follows that $\varphi$ is $\varepsilon$-stable if and only if there exist $K_\varepsilon \in \bN$, together with a family of $(i_{\varepsilon,s},j_{\varepsilon,s},k_{\varepsilon,s},\ell_{\varepsilon,s})$ satisfying $i_{\varepsilon,s} < j_{\varepsilon,s}$ and $k_{\varepsilon,s} < \ell_{\varepsilon,s}$ for $s < K_\varepsilon$, such that
  \begin{gather}
    \label{eq:ContinuousVsAffineStabilityConditionAffine}
    T \vDash \sup_{x,y} \sum_{s < K_\varepsilon} \, \Bigl( \varphi(x_{i_{\varepsilon,s}},y_{j_{\varepsilon,s}}) - \varphi(x_{\ell_{\varepsilon,s}},y_{k_{\varepsilon,s}}) \Bigr) < K_\varepsilon \varepsilon.
  \end{gather}

  Now assume that $\bT$ is a continuous theory, and $\varphi(x,y)$ is an affine formula, stable in $\bT$.
  Then $\varphi$ is stable in $T = \bT_\aff$, by \autoref{cor:preservation-by-affine-part}.
  Does $\varepsilon$-stability in $\bT$ imply $\varepsilon$-stability in $\bT_\aff$?
  More specifically, how can we deduce a condition of the form \autoref{eq:ContinuousVsAffineStabilityConditionAffine} (for $T = \bT_\aff$) from one of the form \autoref{eq:ContinuousVsAffineStabilityConditionStable}?
  Also, can one replace \autoref{eq:ContinuousVsAffineStabilityConditionAffine} with a form closer to \autoref{eq:ContinuousVsAffineStabilityConditionStable}, with the sum taken over all pairs below some index?
\end{qst}

In the converse direction, we recall that to an affine theory $T$ we may associate two distinguished continuous logic theories, namely: the common continuous theory $T_\ext$ of its extremal models (see \cite[\textsection14]{BenYaacov-Ibarlucia-Tsankov:AffineLogic}); and the common continuous theory $T_\crc$ of the direct integrals of models of $T$ over atomless probability spaces (see \cite[\textsection17]{BenYaacov-Ibarlucia-Tsankov:AffineLogic}), called the \emph{convex realisation completion} of $T$.
Concerning the latter we may observe the following.

\begin{cor}
  \label{cor:T-stable-TCR-stable}
  Let $T$ be an affine theory.
  Then $T$ is affinely stable if and only if $T_\crc$ is stable in continuous logic.
\end{cor}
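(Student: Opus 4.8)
The plan is to prove the two implications separately: the backward one is essentially a restatement of \autoref{cor:preservation-by-affine-part}, while the forward one is where the direct-integral machinery is needed.

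For the backward direction, assume $T_\crc$ is stable in continuous logic, and let $\varphi(x,y)$ be any affine $\cL$-formula. Being in particular a continuous formula, $\varphi$ is stable in every model of $T_\crc$. Now given any $M \vDash T$, the direct multiple $L^1(\Omega,M) = \int^\oplus_\Omega M\,d\mu$ over an atomless standard probability space $\Omega$ is a model of $T_\crc$ by definition, and the diagonal (constant-section) map realises $M$ as an affine elementary substructure $M \preceq^\aff L^1(\Omega,M)$. Since the double limit property for $\varphi$ passes from a structure to any of its affine substructures (sequences in $M$ are sequences in $L^1(\Omega,M)$ carrying the same $\varphi$-values), $\varphi$ is stable in $M$. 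As $M$ and $\varphi$ were arbitrary, $T$ is affinely stable. One may equally phrase this through $(T_\crc)_\aff$ and invoke \autoref{cor:preservation-by-affine-part} directly, noting that each $M \vDash T$ satisfies $(T_\crc)_\aff$.

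For the forward direction, assume $T$ is affinely stable. Stability of a continuous formula in a continuous theory is detected in all models, and is invariant under $\equiv$ in continuous logic; since every model of $T_\crc$ is elementarily equivalent to a direct integral $N = \int^\oplus_\Omega M_\omega\,d\mu$ with each $M_\omega \vDash T$ and $\Omega$ atomless, it suffices to show that every continuous formula is stable in such $N$. Two reductions apply. First, by \autoref{thm:stability-direct-integral}, every affine $\cL$-formula, being stable in each fibre $M_\omega \vDash T$, is stable in $N$. Second, the double limit property is preserved under continuous connectives: if $\psi_1,\dots,\psi_r$ are stable in $N$ and $c\colon \bI^r \to \bR$ is continuous, then by \autoref{thm:DoubleLimitTopological} each $\psi_j$ extends to a separately continuous function on the relevant closures, and $c$ composed with these extensions is the required separately continuous extension of $c(\psi_1,\dots,\psi_r)$. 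An induction on formula complexity then reduces everything to the quantifier step.

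The whole difficulty is therefore concentrated in showing that $\sup_z \varphi(x,y,z)$ remains stable in $N$ once $\varphi$ is, and this is the step I expect to be the main obstacle. The point is that $\sup_z\varphi$ is genuinely continuous (not affine) and its value on $N$ is \emph{not} an integral of fibre values, so affine stability of $T$ — which only controls affine formulas in the fibres — does not apply directly. To close the gap I would exploit the specific structure of $T_\crc$: its models are atomless direct integrals, in which quantification is computed fibrewise by measurable selection, so that $(\sup_z\varphi)^N(a,b) = \int_\Omega \sup_z \varphi^{M_\omega}\bigl(a(\omega),b(\omega),z\bigr)\,d\mu(\omega)$; combined with the decomposition of each $M_\omega$ into a direct integral of extremal models (\cite[Cor.~10.3]{BenYaacov-Ibarlucia-Tsankov:AffineLogic}), one would push the double-limit computation through the integral by dominated convergence and reduce to stability of the fibrewise quantified predicate in extremal models. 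Making this reduction legitimate is exactly where the convex-realisation completeness of $T_\crc$ (the randomisation-type structure, in the spirit of \cite{BenYaacov:RandomVariables}) must be used, and it is the crux of the argument; modulo this structural input the induction yields stability of every continuous formula in $N$, hence stability of $T_\crc$ in continuous logic.
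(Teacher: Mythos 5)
Your backward direction is correct and essentially the paper's: both reduce to \autoref{cor:preservation-by-affine-part} via the identity $(T_\crc)_\aff \equiv T$ (your alternative phrasing through $M \preceq^\aff L^1(\Omega,M)$ and restriction of the double limit property to affine substructures is also fine).

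The forward direction, however, has a genuine gap, and you have correctly located it yourself: the quantifier step. Your induction on formula complexity handles affine formulas and continuous connectives, but the case of $\sup_z \varphi$ is left as ``the crux of the argument'' with only a speculative sketch. That sketch does not close the gap: even granting that quantification over a direct integral is computed fibrewise, the fibrewise predicate $\sup_z \varphi^{M_\omega}$ is a genuinely continuous (non-affine) formula in the fibre, so affine stability of $T$ says nothing about it, and \autoref{thm:stability-direct-integral} only applies to affine formulas --- you would be trying to prove continuous stability of the fibres, which is precisely what can fail. The paper avoids the induction entirely by invoking a structural property of $T_\crc$ established in \cite[Thm.~17.7]{BenYaacov-Ibarlucia-Tsankov:AffineLogic}: $T_\crc$ has \emph{delta-convex reduction}, meaning that modulo $T_\crc$ \emph{every} continuous formula, quantifiers included, is a uniform limit of continuous combinations of affine formulas. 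Since the double limit property is preserved under continuous combinations of finitely many stable formulas (as you note) and under uniform limits, and since every affine formula is stable in every model of $T_\crc$ (such a model being a model of $(T_\crc)_\aff \equiv T$), stability of every continuous formula follows with no case analysis on quantifiers. Without importing delta-convex reduction (or reproving it), your argument does not go through.
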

\begin{proof}
  By \cite[Thm.~17.7]{BenYaacov-Ibarlucia-Tsankov:AffineLogic}, $(T_\crc)_\aff \equiv T$, so one implication follows from \autoref{cor:preservation-by-affine-part}.
  Also by \cite[Thm.~17.7]{BenYaacov-Ibarlucia-Tsankov:AffineLogic}, the theory $T_\crc$ has \emph{delta-convex reduction}, i.e., every continuous formula can be approximated by continuous combinations of affine formulas.
  The other implication follows.
\end{proof}

As a special case, we remark that complete, \emph{Poulsen} theories (i.e., whose type spaces $\tS_x^\aff(T)$ are isomorphic to the Poulsen simplex) are affinely stable if and only if they are stable in continuous logic, because complete Poulsen theories are equivalent to their convex realisation completions (see \cite[Thm.~20.4]{BenYaacov-Ibarlucia-Tsankov:AffineLogic}).
The main examples of this kind are given by ergodic theory. See \autoref{exm:PMP} below.

As another special case, which combines the previous two corollaries, we recover the following result from \cite{BenYaacov-Keisler:MetricRandom}.

\begin{cor}
\label{cor:stability-randomisations}
  Let $\bT$ be a continuous logic theory.
  If $\bT$ is stable in continuous logic, then so is its atomless randomisation, $\bT^{\Rand}$.
\end{cor}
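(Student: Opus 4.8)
The plan is to realise the randomisation $\bT^{\Rand}$ as the convex realisation completion of the affine part of $\bT$, and then chain together the two preceding corollaries. First I would establish that $\bT^{\Rand}$ coincides, as a continuous theory, with $(\bT_\aff)_\crc$. This is the conceptual heart of the argument: the models of $\bT^{\Rand}$ are the randomisations of models of $\bT$, that is, direct integrals $\int^\oplus_\Omega M_\omega \, d\mu$ with each $M_\omega \vDash \bT$ and $\Omega$ atomless, and these are precisely the structures whose common continuous theory defines $(\bT_\aff)_\crc$. To match the two classes precisely I would pass through extremal models: by \cite[Cor.~10.3]{BenYaacov-Ibarlucia-Tsankov:AffineLogic} every model of $\bT_\aff$ embeds affinely into a direct integral of extremal models, and by \cite[Prop.~13.4]{BenYaacov-Ibarlucia-Tsankov:AffineLogic} every extremal model of $\bT_\aff$ embeds affinely into a model of $\bT$. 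Since a direct integral of direct integrals is again a direct integral, the atomless direct integrals of models of $\bT_\aff$ and those of models of $\bT$ share the same common continuous theory, giving the desired identification.

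With $\bT^{\Rand} \equiv (\bT_\aff)_\crc$ in hand, the remainder is a purely formal two-step deduction. Since $\bT$ is assumed stable in continuous logic, \autoref{cor:preservation-by-affine-part} yields that its affine part $\bT_\aff$ is affinely stable. Applying \autoref{cor:T-stable-TCR-stable} to the affine theory $T = \bT_\aff$, affine stability of $\bT_\aff$ is equivalent to continuous-logic stability of $(\bT_\aff)_\crc$. Combining the two, $(\bT_\aff)_\crc$ is stable in continuous logic, and therefore so is $\bT^{\Rand}$.

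I expect the identification $\bT^{\Rand} \equiv (\bT_\aff)_\crc$ to be the only genuine obstacle; once it is available — and I believe it is essentially recorded in the companion paper's treatment of $T_\crc$ and of randomisations — the stability transfer is immediate. The delicate point to verify is that enlarging the class of base models from those of $\bT$ to all models of $\bT_\aff$ does not enlarge the common continuous theory of the atomless direct integrals, which is exactly what the extremal-model reduction through \cite[Cor.~10.3]{BenYaacov-Ibarlucia-Tsankov:AffineLogic} and \cite[Prop.~13.4]{BenYaacov-Ibarlucia-Tsankov:AffineLogic} is designed to guarantee.
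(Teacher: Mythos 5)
Your overall two-step strategy --- identify the randomisation with the convex realisation completion of an affine theory, then chain \autoref{cor:preservation-by-affine-part} with \autoref{cor:T-stable-TCR-stable} --- is exactly the paper's, but the identification you propose, $\bT^{\Rand} \equiv (\bT_\aff)_\crc$, is incorrect, and the gap is not the one you flagged. The problem is not whether enlarging the class of base models changes the common continuous theory of the atomless direct integrals; it is a language mismatch. The theory $(\bT_\aff)_\crc$ lives in the original language $\cL$, whereas the randomisation lives in a much richer one: it has an auxiliary value sort and, crucially, expectation predicates for \emph{every} continuous $\cL$-formula, not merely for affine combinations of atomic ones. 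A direct integral of models of $\bT$, viewed as an $\cL$-structure, only interprets (by integration) the atomic predicates and their affine combinations; a quantity such as $\int \min\bigl(\varphi,\psi\bigr)\,d\mu$ is part of the randomisation structure but is not recoverable from $(\bT_\aff)_\crc$, even using its delta-convex reduction (which yields continuous combinations of integrated affine formulas, not integrals of continuous combinations).

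The paper repairs this by Morleyising first: by Theorem~21.9 of \cite{BenYaacov-Ibarlucia-Tsankov:AffineLogic}, $\bT^{\Rand}$ is interdefinable with $\bigl((\bT^+)_\Bau\bigr)_\crc$, where $\bT^+$ adds the value sort and $(\bT^+)_\Bau = \bigl((\bT^+)_\Mor\bigr)_\aff$ is the affine part of the \emph{Morleyisation}, so that every continuous formula becomes atomic, hence affine, before the affine part is taken. Your two-corollary deduction then goes through verbatim with $(\bT^+)_\Mor$ in place of $\bT$, since stability is preserved under Morleyisation and under adjoining a structureless value sort. So the stability-transfer half of your argument is sound; what is missing is the correct (Morleyised) form of the identification, and your extremal-model reduction does not address the language mismatch at all.
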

\begin{proof}
  Using the notation of \cite{BenYaacov-Ibarlucia-Tsankov:AffineLogic}, and by Theorem~21.9 therein, $\bT^{\Rand}$ is interdefinable with the theory $\bT' = \bigl((\bT^+)_\Bau\bigr)_\crc$.
  Here, $\bT^+$ is the result of enriching $\bT$ with the value sort $[0,1]$ (and no additional structure), and $(\bT^+)_\Bau = \bigl((\bT^+)_\Mor\bigr)_\aff$ denotes its Bauerisation, i.e., the affine part of its Morleyisation.
  If $\bT$ is stable in continuous logic, then so are $\bT^+$ and $(\bT^+)_\Mor$, and hence $(\bT^+)_\Bau$ is affinely stable by \autoref{cor:preservation-by-affine-part}.
  Thus $\bT'$, and consequently $\bT^{\Rand}$, are stable in continuous logic, by \autoref{cor:T-stable-TCR-stable}.
\end{proof}

\begin{cor}
  Let $T$ be an affine theory such that the extreme type spaces $\cE_x(T)$ are closed.
  Then $T$ is affinely stable if and only if $T_\ext$ is stable in continuous logic.
\end{cor}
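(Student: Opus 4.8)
The plan is to prove the two implications separately; the backward one is formal and does not use the hypothesis, while the forward one is where closedness of $\cE_x(T)$ enters. The recurring background fact I would lean on is that $(T_\ext)_\aff \equiv T$: the affine conditions true in all extremal models are exactly those in $T$. This holds because each extremal model is a model of $T$, while conversely every model of $T$ embeds affinely into a direct integral of extremal models by \cite[Cor.~10.3]{BenYaacov-Ibarlucia-Tsankov:AffineLogic}, and affine conditions pass to direct integrals by \autoref{eq:Los}. For the backward direction, assume $T_\ext$ is stable in continuous logic. Every affine formula is in particular a continuous formula, hence stable in $T_\ext$, so by \autoref{cor:preservation-by-affine-part} applied to the continuous theory $\bT = T_\ext$ it is stable in $(T_\ext)_\aff \equiv T$. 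As this holds for all affine formulas, $T$ is affinely stable; note that closedness is not needed here.

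For the forward direction, assume $T$ is affinely stable. First I would observe that every affine formula is already stable in $T_\ext$: since $(T_\ext)_\aff \equiv T$, each model of $T_\ext$ is, as an affine structure, a model of $T$; and stability of an affine formula in a given structure depends only on the double limit property of the values it takes, independently of whether the structure is read affinely or continuously. Hence each affine formula, being stable in every model of $T$, is stable in every model of $T_\ext$. It remains to upgrade this to stability of \emph{all} continuous formulas in $T_\ext$.

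This is where I would use the closedness hypothesis through the structure theory of $T_\ext$ developed in \cite[\textsection14]{BenYaacov-Ibarlucia-Tsankov:AffineLogic}: when $\cE_{xy}(T)$ is closed it is compact, the continuous type space $\tS^\cont_{xy}(T_\ext)$ is identified with it, and affine formulas correspond to the restrictions to $\cE_{xy}(T)$ of the continuous affine functions on the compact convex set $\tS^\aff_{xy}(T)$. These affine functions separate points (by Hahn--Banach) and contain the constants, so by Stone--Weierstrass the continuous combinations of affine formulas are uniformly dense among the continuous formulas; equivalently, $T_\ext$ has delta-convex reduction. I would then invoke the closure properties of the class of continuous formulas that are stable in $T_\ext$: it contains all affine formulas, and it is closed under continuous connectives and under uniform limits. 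Indeed, if $\psi_1,\dots,\psi_k$ have the double limit property and $u$ is continuous, then given sequences one passes to subsequences on which all relevant iterated limits of the $\psi_i$ exist, whence continuity of $u$ and the double limit property of each $\psi_i$ force the two iterated limits of $u(\psi_1,\dots,\psi_k)$ to agree; and a direct $\varepsilon$-estimate on iterated limits handles uniform limits. Combining these, every continuous formula, being a uniform limit of continuous combinations of (stable) affine formulas, is stable in $T_\ext$, so $T_\ext$ is stable in continuous logic.

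The one genuinely external ingredient, and the precise point where the hypothesis is indispensable, is the delta-convex reduction of $T_\ext$: without closedness the extreme boundary need not be compact and $\tS^\cont_{xy}(T_\ext)$ need not coincide with $\cE_{xy}(T)$, so continuous formulas need not be approximable by continuous combinations of affine ones and the Stone--Weierstrass step collapses. I expect the main care to lie in pinning down the identification $\tS^\cont_{xy}(T_\ext) \cong \cE_{xy}(T)$ together with the matching of affine formulas to continuous affine functions; once that is in hand, the transfer of stability from affine to all continuous formulas is purely formal via the closure properties above.
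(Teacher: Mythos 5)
Your proposal is correct and follows essentially the same route as the paper: the backward direction via $T = (T_\ext)_\aff$ and \autoref{cor:preservation-by-affine-part}, and the forward direction via delta-convex reduction of $T_\ext$ (which the paper cites directly from \cite[Prop.~16.19]{BenYaacov-Ibarlucia-Tsankov:AffineLogic} rather than rederiving by Stone--Weierstrass) combined with closure of the class of stable continuous formulas under continuous connectives and uniform limits. The extra details you supply are exactly the ones the paper delegates to citations or to the phrase ``the result follows''.
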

\begin{proof}
  Since $T = (T_\ext)_\aff$ (see \cite[Lemma~14.2]{BenYaacov-Ibarlucia-Tsankov:AffineLogic}), one implication follows from \autoref{cor:preservation-by-affine-part} (and holds for arbitrary affine theories).
  For the converse, we recall from \cite[Prop.~16.19]{BenYaacov-Ibarlucia-Tsankov:AffineLogic} that if the extreme type spaces of $T$ are closed, then $T_\ext$ has delta-convex reduction.
  As in the proof of \autoref{cor:T-stable-TCR-stable}, the result follows.
\end{proof}

\begin{exm}
  \label{exm:PMP}
  Let $\Gamma$ be a countably infinite amenable group, and consider the affine theory $\FPMP_\Gamma$ of \emph{free} probability measure-preserving (pmp) actions of $\Gamma$, in the language of probability algebras expanded with unary function symbols for the elements of $\Gamma$ (see \cite[\textsection28]{BenYaacov-Ibarlucia-Tsankov:AffineLogic}).
  This is a complete Poulsen theory with affine (and also continuous) quantifier elimination, and indeed the model completion of the theory $\PMP_\Gamma$ of pmp $\Gamma$-systems.
  It is easy to see that quantifier-free formulas are stable in models of $\PMP_\Gamma$, because probability algebras are stable and the expanded language only adds unary function symbols.
  Hence, by quantifier elimination, the theory $\FPMP_\Gamma$ is stable.

  More generally, for an arbitrary countable group $\Gamma$, consider the affine theory $\PMP_\theta$ of those pmp $\Gamma$-systems having a prescribed IRS $\theta$ that is hyperfinite, ergodic, and nowhere of finite index.
  Then $\PMP_\theta$ is a complete Poulsen theory, and it is also stable for similar reasons.
  See \cite[Thm.~28.10]{BenYaacov-Ibarlucia-Tsankov:AffineLogic}.
  
  For further background concerning the theories $\PMP_\Gamma$ and their model completions in continuous logic, see \cite{BenYaacov-Berenstein-Henson-Usvyatsov:NewtonMS, Giraud2019, Berenstein-Ibarlucia-Henson, GoldbringSewardTucker-Drob}
\end{exm}

\section{Non-forking extensions}
\label{sec:NonForking}

Let us fix a language $\cL$ and a tuple of variables $x$.
Given an $\cL$-structure $M$, let us denote by $\overline{\cL_x^\aff}(M)$ the order unit space of affine definable predicates on the variable $x$ with parameters from $M$ (as mentioned earlier, this space can be identified with $\cA\bigl(\tS_x^\aff(M)\bigr)$). If $A\subseteq M$ is any subset, we may also consider the order unit subspace $\overline{\cL_x^\aff}(A)\subseteq \overline{\cL_x^\aff}(M)$ of affine definable predicates with parameters from~$A$, which is the same thing as the set of those $\psi\in \overline{\cL_x^\aff}(M)$ that are invariant under automorphisms of affine extensions of $M$ fixing $A$ pointwise.

On the other hand, let $\Phi$ be a family of affine $\cL$-formulas of the form $\varphi(x,y_\varphi)$, where the variables $y_\varphi$ vary with $\varphi$ and are disjoint from $x$.
Let us denote by $\overline{\cL_\Phi^\aff}(M)$ the collection of \emph{affine $\Phi$-predicates} on $M$, i.e., the uniform limits of affine combinations of $M$-instances of formulas of $\Phi$.
This is an order unit subspace of $\overline{\cL_x^\aff}(M)$. Given $A\subseteq M$, one may also wish to consider the closed unital subspace generated by $A$-instances of formulas of $\Phi$. However, this subspace could be too small, in that it need not contain all $A$-invariant affine $\Phi$-predicates over $M$. This leads us to the following definition.

\begin{dfn}
  \label{dfn:AffinePhiType}
  Let $M$ be an $\cL$-structure and $A \subseteq M$, and let $\Phi$ be as above.
  We define the collection of \emph{affine $\Phi$-predicate over $A$} as
\begin{gather*}
\overline{\cL_\Phi^\aff}(A) = \overline{\cL_x^\aff}(A) \cap \overline{\cL_\Phi^\aff}(M) \subseteq \overline{\cL_x^\aff}(M).
\end{gather*}
Correspondingly, we define the collection of \emph{affine $\Phi$-types over $A$} as the state space
\begin{gather*}
\tS^\aff_\Phi(A) = \tS\left(\overline{\cL_\Phi^\aff}(A)\right).
\end{gather*}
This gives rise to a commutative diagram of surjective continuous affine maps
    \begin{equation}\label{diagram:Phi-type-spaces}
      \begin{tikzcd}
        \tS^\aff_x(M) \arrow{d}{\rho_{M}} \arrow{r}{\pi_A} & \tS^\aff_x(A) \arrow{d}{\rho_{A}} \\
        \tS^\aff_\Phi(M) \arrow{r}{\pi_{\Phi,A}} & \tS^\aff_\Phi(A)
      \end{tikzcd}
    \end{equation}
    where $\tS^\aff_\Phi(M)$ is as defined after \autoref{dfn:FamilyPhiType}.
    If $a \in M^x$, we define $\tp^\aff_\Phi(a/A)$ as the image of $\tp^\aff(a/M)$ (or of $\tp_\Phi(a/M)$, or of $\tp^\aff(a/A)$) in $\tS^\aff_\Phi(A)$.

    When $\Phi = \{\varphi\}$ consists of a single formula, we write $\overline{\cL^\aff_\varphi}(A)$, $\tS^\aff_\varphi(A)$ and $\tp^\aff_\varphi(a/A)$.
\end{dfn}

Notice that $\tp^\aff_\Phi(a/A)$ carries at least as much information as $\tp_\Phi(a/A)$, as defined in \autoref{dfn:FamilyPhiType}, and possibly more. If $A=M$, then clearly the two agree. The following lemma shows that more generally, when $A$ is an affine substructure of $M$, then the two agree as well.

Let us first recall an important fact regarding affine definable predicates and affine extensions.
Assume that $M \preceq^\aff N$, and that $\psi$ is an affine definable predicate on $M$, with parameters in $M$.
Then $\psi$ admits a unique extension to $N$ that is also affine with parameters in $M$, which we also denote by $\psi$.
In other words, we have a natural inclusion $\overline{\cL_x^\aff}(M)\subseteq \overline{\cL_x^\aff}(N)$ (which restricts to an inclusion $\overline{\cL_\Phi^\aff}(M)\subseteq \overline{\cL_\Phi^\aff}(N)$). We recall as well that definable predicates may be combined and quantified over variables, just as formulas. In particular, given $\psi\in \overline{\cL_x^\aff}(M)$ and an affine formula $\chi(x,c)$ with parameters in $M$, then $\sup_x \, \bigl( \psi(x) - \chi(x,c) \bigr)$ evaluates to the same in $M$ and in $N$.

\begin{lem}
  Assume that $A \subseteq M \preceq^\aff N$ and $a \in M^x$, and let $\Phi$ be as above.
  Then $\overline{\cL_\Phi^\aff}(A)$, the space $\tS^\aff_\Phi(A)$, and the type $\tp^\aff_\Phi(a/A)$, are the same when calculated in $M$ or in $N$.

  In particular, the definition of $\tS^\aff_\Phi(M)$, in the sense of \autoref{dfn:AffinePhiType} viewing $M$ as a subset of $N$, agrees with that of \autoref{sec:StabilityInStructure}, viewing $M$ as a structure in its own right.
\end{lem}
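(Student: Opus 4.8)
The plan is to reduce all three assertions to a single claim about the order unit space $\overline{\cL_\Phi^\aff}(A)$: that the canonical inclusion $\overline{\cL_x^\aff}(M)\subseteq\overline{\cL_x^\aff}(N)$ restricts to a bijection between $\overline{\cL_\Phi^\aff}(A)$ computed in $M$ and $\overline{\cL_\Phi^\aff}(A)$ computed in $N$. Granting this, the space $\tS^\aff_\Phi(A)=\tS\bigl(\overline{\cL_\Phi^\aff}(A)\bigr)$ is automatically the same; and since $a\in M^x$ and $M\preceq^\aff N$, the evaluation $\psi\mapsto\psi(a)$ agrees in $M$ and $N$ for every $\psi\in\overline{\cL_\Phi^\aff}(A)$ (the canonical extension of an affine definable predicate to $N$ agrees with it on $M$), so $\tp^\aff_\Phi(a/A)$ is unchanged as well.

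For the inclusion itself, one direction is routine: if $\psi\in\overline{\cL_\Phi^\aff}(A)$ is computed in $M$, then it is a $\Phi$-predicate over $N$ (an $M$-instance is an $N$-instance and uniform limits are preserved), and it remains invariant under automorphisms of affine extensions of $N$ fixing $A$, since any such extension is also an affine extension of $M$. For the reverse direction I would split the problem in two. First, relying on the identification of $\overline{\cL_x^\aff}(A)$ with $\cA\bigl(\tS^\aff_x(A)\bigr)$ and on the fact that $\tS^\aff_x(A)$ depends only on $A$ (as $M\preceq^\aff N$ realise the same affine types over $A$), every $A$-invariant affine definable predicate over $N$ is the canonical extension of a unique $A$-invariant one over $M$; in particular it has parameters in $M$. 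It then remains to prove the statement $(\ast)$: an affine definable predicate $\psi$ with parameters in $M$ that is a $\Phi$-predicate over $N$ is already a $\Phi$-predicate over $M$.

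The main obstacle is $(\ast)$, since a priori the $N$-instances witnessing $\psi\in\overline{\cL_\Phi^\aff}(N)$ use parameters outside $M$, which must be pushed down through the affine elementarity $M\preceq^\aff N$. The delicate point is that the natural way to express ``$\chi_w$ approximates $\psi$ uniformly'' uses the absolute value, which is not an affine connective and hence is not transferred by $\preceq^\aff$. I would circumvent this as follows. Fix $\varepsilon>0$ and, using $\psi\in\overline{\cL_\Phi^\aff}(N)$, choose an affine combination $\chi_w(x)=c_0+\sum_{j<\ell}\lambda_j\varphi_{i_j}(x,w_j)$ of instances with parameters $w=c\in N$ and $\|\psi-\chi_c\|\le\varepsilon$. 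For the fixed ``shape'' $(\ell,\lambda_j,i_j)$ consider the affine sentence
\[
  \Theta=\inf_w\Bigl(\sup_x\bigl(\psi(x)-\textstyle\sum_j\lambda_j\varphi_{i_j}(x,w_j)\bigr)+\sup_x\bigl(\textstyle\sum_j\lambda_j\varphi_{i_j}(x,w_j)-\psi(x)\bigr)\Bigr),
\]
which is built from affine formulas (with parameters in $M$, coming from $\psi$) using only sums and the quantifiers $\sup_x,\inf_w$, and is therefore evaluated identically in $M$ and $N$ by affine elementarity; note that the constant $c_0$ cancels and plays no role. The choice of $c$ gives $\Theta\le2\varepsilon$ in $N$, hence $\Theta\le2\varepsilon$ in $M$, providing $c'\in M$ realising $\sup_x\bigl(\psi-\sum_j\lambda_j\varphi_{i_j}(\cdot,c'_j)\bigr)+\sup_x\bigl(\sum_j\lambda_j\varphi_{i_j}(\cdot,c'_j)-\psi\bigr)\le 2\varepsilon+\delta$.

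Finally I would recentre by an additive constant. The displayed bound controls the sum of the two one-sided suprema but not each separately; replacing the combination by $c_0'+\sum_j\lambda_j\varphi_{i_j}(\cdot,c'_j)$, with $c_0'$ chosen to be half the difference of the two suprema, turns this into a genuine two-sided estimate $\bigl\|\psi-\bigl(c_0'+\sum_j\lambda_j\varphi_{i_j}(\cdot,c'_j)\bigr)\bigr\|\le\varepsilon+\delta/2$ by an affine combination of $M$-instances (constants being allowed among the affine connectives). Letting $\varepsilon,\delta\to0$ yields $\psi\in\overline{\cL_\Phi^\aff}(M)$, which establishes $(\ast)$ and hence the lemma. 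I expect the $F+G$ sentence together with the recentring device to be the decisive and only nonroutine ingredient, precisely because it encodes a two-sided uniform approximation while staying inside the affine fragment.
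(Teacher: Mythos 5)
Your proposal is correct and follows essentially the same route as the paper: the decisive step in both is to replace the (non-affine) uniform-norm condition $\sup_x |\psi - \chi(\cdot,\bar b)| < \varepsilon$ by the affine definable predicate $\sup_x(\psi - \chi(\cdot,\bar b)) + \sup_x(\chi(\cdot,\bar b) - \psi)$, transfer the existence of good parameters from $N$ down to $M$ by affine elementarity, and then recentre by an additive constant to recover a genuine two-sided approximation by $M$-instances. The only cosmetic difference is that you isolate as a separate preliminary step the fact that $A$-invariant affine definable predicates over $N$ already live over $M$, which the paper treats as implicit.
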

\begin{proof}
  By the preceding reminder, $\overline{\cL_x^\aff}(A) \cap \overline{\cL_\Phi^\aff}(M) \subseteq \overline{\cL_x^\aff}(A) \cap \overline{\cL_\Phi^\aff}(N)$, so one direction is clear.
  
  Conversely, let $\psi\in \overline{\cL_x^\aff}(A) \cap \overline{\cL_\Phi^\aff}(N)$.
  For every $\varepsilon > 0$ there exist formulas $\varphi_i\in\Phi$, an affine combination $\chi(x,\bar{y}) = \alpha + \sum_{i<n} \beta_i \varphi_i(x,y_i)$ and $\bar{b} = (b_i : i <n) \in N^{\bar{y}}$ such that $\sup_x \, |\psi(x) - \chi(x,\bar{b})| < \varepsilon$ in $N$.
  Consequently, in $N$,
  \begin{gather*}
    \sup_x \, \bigl( \psi(x) - \chi(x,\bar{b}) \bigr) + \sup_x \,\bigl( \chi(x,\bar{b}) - \psi(x) \bigr) < 2\varepsilon.
  \end{gather*}
  Since the left hand side is an affine definable predicate, and $\psi\in \overline{\cL_x^\aff}(M)$, there exists $c \in M^{\bar{y}}$ such that, in $M$ (and therefore in $N$),
  \begin{gather*}
    \sup_x \, \bigl( \psi(x) - \chi(x,\bar{c}) \bigr) + \sup_x \,\bigl( \chi(x,\bar{c}) - \psi(x) \bigr) < 2\varepsilon.
  \end{gather*}
  It follows that for some $\gamma \in \bR$,
  \begin{gather*}
    \sup_x \, \bigl( \psi(x) - \chi(x,\bar{c}) \bigr) - \gamma < \varepsilon \quad\text{and}
    \quad
    \sup_x \,\bigl( \chi(x,\bar{c}) - \psi(x) \bigr) + \gamma < \varepsilon.
  \end{gather*}
  Since $\chi(x,\bar{c}) + \gamma$ is an affine combination of $M$-instances of formulas of $\Phi$, we conclude that $\psi\in \overline{\cL_\Phi^\aff}(M)$, as desired.

  We have shown that the collection of affine $\Phi$-predicates over $A$ is the same in $M$ and in $N$, so the identification of the corresponding type spaces $\tS_\Phi^\aff(A)$, computed in $M$ or in $N$, follows.
  In particular, the quotient $\tS^\aff_x(A) \rightarrow \tS^\aff_\Phi(A)$ is the same in $M$ and in $N$.
  Since $\tp^\aff(a/A)$ is the same in $M$ and in $N$, so is $\tp^\aff_\varphi(a/A)$.

  This proves the first assertion.
  The second ones follows, letting $A = M$.
\end{proof}

If $A \subseteq B \subseteq M$, this gives rise to a commutative diagram of surjective continuous affine maps
\begin{center}
  \begin{tikzcd}
    \tS^\aff_x(M) \arrow[d] \arrow[r] & \tS^\aff_x(B) \arrow[d] \arrow[r] & \tS^\aff_x(A) \arrow[d] \\
    \tS^\aff_\Phi(M) \arrow[r] & \tS^\aff_\Phi(B) \arrow[r] & \tS^\aff_\Phi(A)
  \end{tikzcd}
\end{center}
The horizontal arrows are the \emph{parameter restriction} maps.
If $p \in \tS^\aff_x(B)$ and $q = p\rest_A \in \tS^\aff_x(A)$ is its restriction to $A$, then $p$ is an \emph{extension} of $q$ to $B$.
Similarly for $p \in \tS^\aff_\Phi(B)$ and $q = p\rest_A \in \tS^\aff_\Phi(A)$.
By the previous lemma, the square involving $A$ and $B$ does not depend on $M$, i.e., it remains the same if we replace $M$ with an affine extension.

Recall that the natural action of $\Aut(M)$ on $\tS^\aff_x(M)$ is given by
\begin{gather*}
  \psi(x,gb)^{gp} = \psi(x,b)^p,
\end{gather*}
where $\psi(x,b)$ varies over all $M$-instances of affine formulas. The action is continuous and affine.
Note that if $g \in \Aut(M/A)$ (i.e., $g$ fixes $A$ pointwise) and $p \in \tS^\aff_x(M)$, then $p\rest_A = (gp)\rest_A$.

If we restrict to instances of formulas in $\Phi$, we obtain the natural action $\Aut(M) \curvearrowright \tS^\aff_\Phi(M)$.
The two actions are equivariant for the quotient map $\tS^\aff_x(M) \rightarrow \tS^\aff_\Phi(M)$.
Therefore, if $g \in \Aut(M/A)$ and $p \in \tS^\aff_\Phi(M)$, then $p\rest_A = (gp)\rest_A$.

\begin{dfn}
  A type $p\in\tS^\aff_\Phi(M)$ is \emph{affinely definable over $A\subseteq M$} if for each formula $\varphi\in\Phi$, the $\varphi$-definition $d_\varphi p\colon M^{y_\varphi} \rightarrow \bR$ is an affine definable predicate with parameters in $A$.
\end{dfn}

\begin{rmk}
  \label{rmk:phi-type-def-over-A}
  If $p\in\tS^\aff_\Phi(M)$ is affinely definable over $A$ and $\varphi\in\Phi$ is stable in $M$, then by \autoref{thm:DoubleLimitFormula}, the definition $d_\varphi p$ belongs to $\overline{\cL^\aff_{\varphi^\op}}(A)$.
\end{rmk}

\begin{thm}
  \label{thm:LocalStationary}
  Let $M$ be an $\cL$-structure.
  Let $\Phi$ be a family of affine $\cL$-formulas of the form $\varphi(x,y_\varphi)$, all stable in $\Th^\aff(M)$.
  Let $A \subseteq M$ and $p \in \tS^\aff_\Phi(A)$.
  Then the following hold:
  \begin{enumerate}
  \item\label{item:LocalStationary-extention} The type $p$ admits a unique extension to $M$ that is affinely definable over $A$. We denote it by $p^M$.
  \item\label{item:LocalStationary-compatibility} If $p = q\rest_\Phi$ is the projection of a type $q\in\tS^\aff_x(A)$, then $p^M$ is consistent with $q$.
  \item\label{item:LocalStationary-preceq-aff} If $M \preceq^\aff N$, then $d_\varphi p^M = d_\varphi p^N$ for every $\varphi\in\Phi$ (in $M$, or equivalently in $N$, both being affine definable predicates over $A$).
  \end{enumerate}
\end{thm}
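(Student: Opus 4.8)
The plan is to realise $p^{M}$ as a Ryll--Nardzewski fixed point in a monster model, and then to extract the uniqueness clause — the real content of the statement — by playing $\varphi$ off against $\varphi^{\op}$. Fix a sufficiently saturated and strongly homogeneous extension $\mathfrak{M}\succeq^{\aff}M$; since affine definable predicates over $A$ are intrinsic (they extend uniquely along affine extensions), it suffices to build the extension over $\mathfrak{M}$ and restrict. The fibre $K=\pi_{\Phi,A}^{-1}(p)\subseteq\tS^{\aff}_{\Phi}(\mathfrak{M})$ is non-empty (states restrict onto $\tS^{\aff}_{\Phi}(A)$), compact, convex, and invariant under $G=\Aut(\mathfrak{M}/A)$, because automorphisms fixing $A$ preserve restrictions to $A$. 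As each $\varphi\in\Phi$ is stable in $\Th^{\aff}(M)$, hence in $\mathfrak{M}$, \autoref{cor:StableRyllNardzewski} provides a $G$-fixed point $p^{\mathfrak{M}}\in K$. By stability and \autoref{thm:DoubleLimitFormula} each $d_{\varphi}p^{\mathfrak{M}}$ is an affine $\varphi^{\op}$-predicate; being $G$-invariant and $\mathfrak{M}$ homogeneous, it is furthermore definable over $A$, so it lies in $\overline{\cL^{\aff}_{\varphi^{\op}}}(A)$ and $p^{\mathfrak{M}}$ is affinely definable over $A$. Setting $p^{M}=p^{\mathfrak{M}}\rest_{M}$ yields an $A$-definable extension of $p$. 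The identical argument applied to the singleton families $\{\varphi^{\op}\}$ (stable, since stability is symmetric) shows that every $r\in\tS^{\aff}_{\varphi^{\op}}(A)$ likewise has an $A$-definable extension $r^{M}\in\tS^{\aff}_{\varphi^{\op}}(M)$; this is the input I will feed into uniqueness.

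For uniqueness, let $p_{1},p_{2}\in\tS^{\aff}_{\Phi}(M)$ be affinely definable over $A$ and both restrict to $p$, and fix $\varphi\in\Phi$. Since $d_{\varphi}p_{i}\in\overline{\cL^{\aff}_{\varphi^{\op}}}(A)$, its value at $s\in\tS^{\aff}_{\varphi^{\op}}(M)$ depends only on $s\rest_{A}$; write $d_{\varphi}p_{i}=\widehat{d_{i}}\circ\pi_{\varphi^{\op},A}$. Given any $r\in\tS^{\aff}_{\varphi^{\op}}(A)$, take its $A$-definable extension $s=r^{M}$ from the previous paragraph. Then $d_{\varphi^{\op}}s\in\overline{\cL^{\aff}_{\varphi}}(A)$ by \autoref{rmk:phi-type-def-over-A}, and the symmetry clause of \autoref{thm:DoubleLimitFormula} gives
\begin{gather*}
  \widehat{d_{i}}(r)=d_{\varphi}p_{i}(s)=(d_{\varphi^{\op}}s)^{p_{i}}=(d_{\varphi^{\op}}s)^{p},
\end{gather*}
the last step because $p_{i}\rest_{A}=p$. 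The right-hand side is independent of $i$, so $\widehat{d_{1}}=\widehat{d_{2}}$ and hence $d_{\varphi}p_{1}=d_{\varphi}p_{2}$ for every $\varphi$, i.e.\ $p_{1}=p_{2}$. In passing this identifies $d_{\varphi}p^{M}$ explicitly as $r\mapsto(d_{\varphi^{\op}}r^{M})^{p}$, completing \autoref{item:LocalStationary-extention}.

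Part \autoref{item:LocalStationary-preceq-aff} is then formal: for $M\preceq^{\aff}N$ the restriction $p^{N}\rest_{M}$ is an $A$-definable extension of $p$, hence equals $p^{M}$ by uniqueness, and since $d_{\varphi}p^{N}$ is the unique extension over $A$ of $d_{\varphi}p^{M}$, the two definitions coincide. For part \autoref{item:LocalStationary-compatibility}, given $q\in\tS^{\aff}_{x}(A)$ with $q\rest_{\Phi}=p$, I would push the fibre $\pi_{A}^{-1}(q)\subseteq\tS^{\aff}_{x}(\mathfrak{M})$ forward to $K'=\rho_{\mathfrak{M}}(\pi_{A}^{-1}(q))\subseteq\tS^{\aff}_{\Phi}(\mathfrak{M})$: it is non-empty, compact, convex, $G$-invariant, and contained in $K$ by commutativity of \eqref{diagram:Phi-type-spaces}. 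A further application of \autoref{cor:StableRyllNardzewski} yields a $G$-fixed point of $K'$ which, being $A$-definable and extending $p$, equals $p^{\mathfrak{M}}$ by uniqueness. Thus $p^{\mathfrak{M}}\in K'$, so some $r\in\tS^{\aff}_{x}(\mathfrak{M})$ restricts to $q$ and projects to $p^{\mathfrak{M}}$; then $r\rest_{M}$ restricts to $q$ and projects to $p^{M}$, witnessing that $p^{M}$ is consistent with $q$.

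I expect the crux — and the step most likely to mislead — to be uniqueness. Over a mere set $A$ the realised $\varphi^{\op}$-types need not be convexly dense in $\tS^{\aff}_{\varphi^{\op}}(A)$, so there is no naive density argument, and classically stationarity fails over sets. The point is that affine existence for $\varphi^{\op}$ supplies an $A$-definable representative $r^{M}$ of every $r\in\tS^{\aff}_{\varphi^{\op}}(A)$, and it is precisely on such $A$-definable types that the symmetry of \autoref{thm:DoubleLimitFormula} transfers the value from $p_{i}$ to $p$; this bootstrap replaces the classical appeal to strong types and is what forces stationarity over arbitrary sets. The only other delicate point is that $\Aut(\mathfrak{M}/A)$-invariance upgrades to definability over $A$ solely because $\mathfrak{M}$ is homogeneous, which is why the construction is performed in $\mathfrak{M}$ and transported back via \autoref{item:LocalStationary-preceq-aff}.
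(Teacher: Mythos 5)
Your proposal is correct and follows essentially the same route as the paper: existence via the Ryll--Nardzewski fixed point (\autoref{cor:StableRyllNardzewski}) in a saturated homogeneous extension, upgrading $\Aut(\mathfrak{M}/A)$-invariance of the definitions to $A$-definability by homogeneity, and uniqueness by evaluating $d_\varphi p_i$ against the $A$-definable extension $r^M$ of a $\varphi^\op$-type and invoking the symmetry clause of \autoref{thm:DoubleLimitFormula}. The only cosmetic difference is that the paper runs the fixed-point argument directly on the smaller fibre $\rho_M\bigl(\pi_A^{-1}(q)\bigr)$, obtaining existence and compatibility in one stroke, whereas you treat them separately and reconcile the two fixed points via uniqueness.
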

\begin{proof}
  We first show the existence part of \autoref{item:LocalStationary-extention}, together with the compatibility condition from \autoref{item:LocalStationary-compatibility}.
  For this, we may replace $M$ with an affine extension.
  Therefore, we may assume that $M$ is affinely $\kappa$-saturated and affinely $\kappa$-homogeneous, where $\kappa > |A| + |\cL| + \aleph_0$ (see \cite[Prop.~6.8, Cor.~17.14]{BenYaacov-Ibarlucia-Tsankov:AffineLogic}).
  Let $p = q\rest_\Phi \in \tS^\aff_\Phi(M)$ with $q\in \tS^\aff_x(A)$.

  Let $\pi_A$, $\pi_{\Phi,A}$ and $\rho_M$ denote the quotient maps as in \autoref{diagram:Phi-type-spaces}, and let
  \begin{gather*}
    K = \rho_M\bigl( \pi_A^{-1}(q)\bigr) \subseteq \tS^\aff_\Phi(M).
  \end{gather*}
  It is a non-empty, compact convex set, invariant under the action of $\Aut(M/A)$.
  Since the diagram \autoref{diagram:Phi-type-spaces} commutes, $\pi_{\Phi,A}(K) = \{p\}$.
  By \autoref{cor:StableRyllNardzewski}, the action $\Aut(M/A)\curvearrowright K$ admits a fixed point, which we may denote by $p^M$.
  In other words, $p^M$ is an $\Aut(M/A)$-invariant extension of $p$ that is consistent with $q$.

  Let us fix a formula $\varphi(x,y_\varphi) \in \Phi$.
  By \autoref{thm:DoubleLimitFormula}, the definition $d_\varphi p^M$ is an affine definable predicate over $M$, which can only use countably many parameters.
  Let $B$ consist of $A$ together with all these parameters.
  In particular, $|B| < \kappa$ and $d_\varphi p^M \in \cA\bigl( \tS^\aff_{y_\varphi}(B) \bigr)$.

  Consider two types $s,t \in \tS^\aff_{y_\varphi}(B)$ that have the same restriction to $\tS^\aff_{y_\varphi}(A)$.
  By saturation, there exist $b,c \in M^{y_\varphi}$ that realise $s$ and $t$, respectively.
  Since $b$ and $c$ have the same affine type over $A$, there exists $g \in \Aut(M/A)$ such that $g(b) = c$.
  Therefore,
  \begin{gather*}
    d_\varphi p^M(s) = \varphi(x,b)^{p^M} = \varphi(x,c)^{gp^M} = \varphi(x,c)^{p^M} = d_\varphi p^M(t).
  \end{gather*}
  We deduce that $d_\varphi p^M$ factors via the quotient map $\tS^\aff_{y_\varphi}(B) \rightarrow \tS^\aff_{y_\varphi}(A)$.
  Therefore, $d_\varphi p^M \in \overline{\cL^\aff_{y_\varphi}}(A)$ for every $\varphi\in\Phi$, and $p^M$ is affinely definable over $A$.

  For uniqueness, we no longer assume that $M$ is saturated.
  Let $p^M_i$, for $i = 0,1$, be two extensions of $p$ to $M$, both affinely definable over $A$, and let $\varphi\in\Phi$.
  Let $b \in M^{y_\varphi}$ and $r = \tp^\aff_{\varphi^\op}(b/A)$.
  We have already established that $r$ admits an extension $r^M \in \tS^\aff_{\varphi^\op}(M)$ that is affinely definable over $A$.
  By \autoref{rmk:phi-type-def-over-A}, its definition $dr^M$ factors via $\tS^\aff_\varphi(A)$.
  Similarly, both $d_\varphi p^M_i$ factor via $\tS^\aff_{\varphi^\op}(A)$.
  Therefore, using the symmetry property of \autoref{thm:DoubleLimitFormula}\autoref{item:DLF-def-and-sym-aff},
  \begin{gather*}
    d_\varphi p^M_i(b) = d_\varphi p^M_i(r^M) = dr^M(p^M_i) = dr^M(p).
  \end{gather*}
  Since $\varphi$ and $b$ are arbitrary, $p^M_0 = p^M_1$.

  For \autoref{item:LocalStationary-preceq-aff}, observe that the restriction of $p^N$ to $M$ is affinely definable over $A$ by the affine definable predicates $d_\varphi p^N$, interpreted in $M$.
  Therefore $p^N\rest_M = p^M$, by uniqueness, and $d_\varphi p^N = d_\varphi p^M$ for every $\varphi\in\Phi$ (in $M$, and therefore also in~$N$).
\end{proof}

\begin{dfn}
  \label{dfn:DefinitionOverSet}
  Let $M$ be an $\cL$-structure, $\Phi$ a family of affine $\cL$-formulas of the form $\varphi(x,y_\varphi)$, $A \subseteq M$, and $p \in \tS^\aff_\Phi(A)$.
  
  Assume that $\varphi(x,y_\varphi) \in \Phi$ is stable in $\Th^\aff(M)$.
  Let $p_\varphi \in \tS^\aff_\varphi(A)$ denote the restriction of $p$ to $\varphi$, and let $p_\varphi^M \in \tS^\aff_\varphi(M)$ denote the unique extension that is affinely definable over $A$, as per \autoref{thm:LocalStationary}.
  Its definition will be called the \emph{$\varphi$-definition} of $p$ (or simply its \emph{definition}, if $\Phi = \{\varphi\}$), denoted
  \begin{gather*}
    d_\varphi p = d (p_\varphi^M) \in \overline{\cL^\aff_{\varphi^\op}}(A).
  \end{gather*}
\end{dfn}

The $\varphi$-definition $d_\varphi p$ does not depend on $M$, by \autoref{thm:LocalStationary}\autoref{item:LocalStationary-preceq-aff}.
By construction, it is an affine $\varphi^\op$-predicate over $A$.
It determines $p_\varphi^M$ (which of course also depends on $M$), and therefore determines $p_\varphi$ (independently of $M$).
If $A = M$, then $p_\varphi = p_\varphi^M$, so this definition of $d_\varphi p$ is compatible with the special case of a $\varphi$-type over a structure treated earlier in \autoref{dfn:DefinitionOverModel} and in the discussion preceding \autoref{eq:FamilyPhiTypeDistance}.
If $\Phi$ consists of stable formulas and $p \in \tS^\aff_\Phi(A)$, then $d_\varphi p$ coincides with the $\varphi$-definition of $p^M \in \tS^\aff_\Phi(M)$, by the uniqueness clause of \autoref{thm:LocalStationary}\autoref{item:LocalStationary-extention}.

\begin{lem}
  \label{lem:DefinitionAffineCombination}
  Let $\Phi$ be a collection of affine formulas of the form $\varphi(x,y_\varphi)$ as usual.
  Let $\psi(x,y) = \alpha + \sum \beta_i \varphi_i(x,y_i)$ be an affine combination of formulas $\varphi_i(x,y_i) \in \Phi$.
  Let $T$ be an affine theory, and $A \subseteq M \vDash T$.
  Then the following hold.
  \begin{enumerate}
  \item If $p \in \tS^\aff_\Phi(A)$, then it admits a unique extension $p' \in \tS^\aff_{\Phi'}(A)$, where $\Phi' = \Phi \cup \{\psi\}$.
  \item If each $\varphi_i$ is stable in $T$, then so is $\psi$.
  \item If both previous items hold and we define $d_\psi p = d_\psi p'$, then
    \begin{gather*}
      d_\psi p(y) = \alpha + \sum \beta_i d_{\varphi_i} p(y_i).
    \end{gather*}
  \end{enumerate}
  In other words, we may freely close $\Phi$ under affine combinations, and definitions of types relative to stable formulas respect affine combinations.
\end{lem}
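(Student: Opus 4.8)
The plan is to treat the three parts in order of decreasing difficulty, the soft points resting on the predicate-space formalism of \autoref{dfn:AffinePhiType} and the stationarity machinery of \autoref{thm:LocalStationary}, and the real work being confined to the second part.

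For part~(1), I would observe that adjoining $\psi$ to $\Phi$ does not enlarge the relevant predicate space. Every $M$-instance $\psi(x,c) = \alpha + \sum_i \beta_i \varphi_i(x,c_i)$ is itself an affine combination of $M$-instances of formulas of $\Phi$ (together with the unit), so the uniform-closed affine spans coincide: $\overline{\cL^\aff_{\Phi'}}(M) = \overline{\cL^\aff_\Phi}(M)$. Intersecting with $\overline{\cL^\aff_x}(A)$ as in \autoref{dfn:AffinePhiType} then gives $\overline{\cL^\aff_{\Phi'}}(A) = \overline{\cL^\aff_\Phi}(A)$, whence $\tS^\aff_{\Phi'}(A) = \tS^\aff_\Phi(A)$ canonically. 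Thus $p$ is its own unique extension $p'$, and nothing further is needed.

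Part~(2) is the crux. Since stability in $T$ means stability in every model, I would fix $M \vDash T$ and verify that $\psi$ has the double limit property there (\autoref{dfn:DoubleLimit}). Take sequences $(a_m)$ in $M^x$ and $(b_k)$ in $M^y$, with $b_k = (b_{k,i})_i$, for which both iterated limits of $\psi(a_m,b_k)$ exist. Passing to subsequences of both indices preserves the value of an already-existing iterated limit, so I may freely refine both sequences; by a standard diagonal argument, using sequential compactness of $\bI$ and the fact that there are only finitely many $i$, I can arrange that for every $i$ both iterated limits of $\varphi_i(a_m,b_{k,i})$ exist. Stability of each $\varphi_i$ in $M$ gives $\lim_m \lim_k \varphi_i = \lim_k \lim_m \varphi_i$, and since $\psi(a_m,b_k) = \alpha + \sum_i \beta_i \varphi_i(a_m,b_{k,i})$, linearity of limits forces the two iterated limits of $\psi$ to agree. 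I expect the only delicate point to be the bookkeeping in the simultaneous extraction of subsequences making all the component iterated limits exist at once, which is routine.

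For part~(3), all formulas of $\Phi' = \Phi \cup \{\psi\}$ are now stable in $\Th^\aff(M)$ by part~(2), so \autoref{thm:LocalStationary} applies to $\Phi'$ and produces the unique $A$-definable extension $(p')^M \in \tS^\aff_{\Phi'}(M)$ of $p'$. By the uniqueness clause, its restriction to the single formula $\psi$ is the $A$-definable extension of $p'\rest_\psi$, so $\psi(x,b)^{(p')^M} = d_\psi p(b)$ for every $b \in M^y$ (recall $d_\psi p = d_\psi p'$), while its restriction to $\Phi$ agrees with the $A$-definable extension of $p$, giving $\varphi_i(x,b_i)^{(p')^M} = d_{\varphi_i} p(b_i)$. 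Since $(p')^M$ is a state on $\overline{\cL^\aff_{\Phi'}}(M)$, it is affine on affine combinations of instances, whence
\[
  d_\psi p(b) = \psi(x,b)^{(p')^M} = \alpha + \sum_i \beta_i\, \varphi_i(x,b_i)^{(p')^M} = \alpha + \sum_i \beta_i\, d_{\varphi_i} p(b_i).
\]
As $b$ ranges over $M^y$ this is precisely the asserted identity of definitions, so part~(3) follows formally once \autoref{thm:LocalStationary} is in hand.
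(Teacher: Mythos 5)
Your proposal is correct and follows essentially the same route as the paper, whose proof is just a three-line sketch: part (1) is immediate (via the identity of predicate spaces you spell out), part (2) follows from the double limit criterion exactly as you argue, and part (3) follows from \autoref{thm:LocalStationary} applied to $p'$ and $\Phi'$. Your write-up simply supplies the routine details (the simultaneous subsequence extraction, the uniqueness argument identifying the restriction of $(p')^M$ to $\Phi$ with $p^M$) that the paper leaves implicit.
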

\begin{proof}
  The first item is immediate, and the second follows from the double limit criterion for stability.
  The last item follows from \autoref{thm:LocalStationary} applied to $p'$ and $\Phi'$.
\end{proof}

Suppose that $\varphi(x,y)$ is an affine formula stable in $\Th^\aff(M)$, and that we have $A \subseteq B \subseteq M$.
Let $p \in \tS^\aff_\varphi(B)$, and let $q \in \tS^\aff_\varphi(A)$ be its restriction to $A$.
Then $dp = dq$ if and only if $dp$ is affine over $A$.
Indeed, one direction holds since $dq$ is affine over $A$.
Conversely, if $dp$ is affine over $A$, then $p^M$ is affinely definable over $A$ and extends $q$, so $p^M = q^M$ and $dp = dp^M = dq^M = dq$.

\begin{dfn}
  \label{dfn:NonForkingExtension}
  Let $T$ be an affine theory, and let $\Phi$ be a set of formulas $\varphi(x,y_\varphi)$, all stable in $T$.
  Let $A \subseteq B\subseteq M \vDash T$, $p \in \tS^\aff_\Phi(B)$ and $q = p\rest_A \in \tS^\aff_\Phi(A)$.
  We say that $p$ \emph{does not fork} over $A$, or that $p$ is the \emph{non-forking extension} of $q$ to $B$, in symbols $p = q^B$, if each $\varphi$-definition of $p$ is affine over $A$ (equivalently, if $d_\varphi p = d_\varphi q$ for every $\varphi \in \Phi$).
\end{dfn}

\begin{lem}
  \label{lem:LocalTransitivity}
  Assume that $\Phi$ is a set of formulas stable in $\Th^\aff(M)$, and let $A \subseteq B \subseteq C \subseteq M$.
  Let $p \in \tS^\aff_\Phi(C)$ and $q = p\rest_A \in \tS^\aff_\Phi(A)$.

  Then $q^C = (q^B)^C$.
  Equivalently, $p$ does not fork over $A$ if and only if it does not fork over $B$ and $p\rest_B$ does not fork over $A$.
\end{lem}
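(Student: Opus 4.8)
The plan is to prove the identity $q^C = (q^B)^C$ directly from the uniqueness clause of \autoref{thm:LocalStationary}, and then read off the stated equivalence by unwinding \autoref{dfn:NonForkingExtension}. Since non-forking is tested one formula at a time, it suffices to track the $\varphi$-definitions for each $\varphi \in \Phi$ separately. Let $q^M \in \tS^\aff_\Phi(M)$ be the unique extension of $q$ to $M$ that is affinely definable over $A$, as produced by \autoref{thm:LocalStationary}\autoref{item:LocalStationary-extention}; by \autoref{dfn:DefinitionOverSet} its definitions are $d_\varphi(q^M) = d_\varphi q$, and these are affine over $A$.

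First I would show that restricting this global extension recovers the two non-forking extensions in sight, namely $q^M\rest_C = q^C$ and $q^M\rest_B = q^B$. The point is that $q^M$, being affinely definable over $A$ and hence over $C$ (using $\overline{\cL^\aff_{\varphi^\op}}(A) \subseteq \overline{\cL^\aff_{\varphi^\op}}(C)$), is an extension of its own restriction $q^M\rest_C$ that is affinely definable over $C$; by uniqueness it is \emph{the} canonical such extension, so $d_\varphi(q^M\rest_C) = d_\varphi(q^M) = d_\varphi q$ is affine over $A$. Thus $q^M\rest_C$ is an extension of $q$ to $C$ affinely definable over $A$, and uniqueness (now for the pair $A \subseteq C$) identifies it with $q^C$. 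The same argument with $C$ replaced by $B$ gives $q^M\rest_B = q^B$, and in particular $d_\varphi(q^B) = d_\varphi q$.

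I then conclude as follows. The type $q^C = q^M\rest_C$ extends $q^B = q^M\rest_B$ (restrict $q^M$ first to $C$, then to $B$), and it is affinely definable over $A$, hence over $B$. So $q^C$ is an extension of $q^B$ to $C$ that is affinely definable over $B$, and the uniqueness clause of \autoref{thm:LocalStationary}, applied to the pair $B \subseteq C$, forces $q^C = (q^B)^C$. For the equivalent reformulation, unwind \autoref{dfn:NonForkingExtension}: ``$p$ does not fork over $A$'' means $p = q^C$, ``$p$ does not fork over $B$'' means $p = (p\rest_B)^C$, and ``$p\rest_B$ does not fork over $A$'' means $p\rest_B = q^B$ (as $p\rest_B\rest_A = q$). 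The conjunction of the last two says exactly $p = (q^B)^C$, which by the identity just proved is the same as $p = q^C$; conversely $(q^B)^C\rest_B = q^B$ recovers $p\rest_B = q^B$ from $p = q^C = (q^B)^C$.

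The only real work is the bookkeeping of $\varphi$-definitions under parameter restriction and the downward transfer ``affine over $A$ implies affine over $B$'' for $A \subseteq B$; both rest entirely on the inclusions $\overline{\cL^\aff_{\varphi^\op}}(A) \subseteq \overline{\cL^\aff_{\varphi^\op}}(B)$ together with the uniqueness of affinely definable extensions, which is the single engine driving every step. No stability input beyond \autoref{thm:LocalStationary} is needed.
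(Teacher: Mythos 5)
Your argument is correct and takes essentially the same route as the paper, whose entire proof is the chain of equalities $dq^C = dq = dq^B = d(q^B)^C$ together with the fact that the $\varphi$-definitions determine the type. Your detour through the global extension $q^M$, parameter restriction, and the uniqueness clause of \autoref{thm:LocalStationary} simply makes explicit the bookkeeping that the paper leaves implicit.
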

\begin{proof}
  Since $dq^C = dq = dq^B = d(q^B)^C$.
\end{proof}

\begin{lem}
  \label{lem:LocalSymmetry}
  Assume that $\varphi(x,y)$ is stable in $\Th^\aff(M)$, and $A \subseteq M$.
  Let $p \in \tS^\aff_\varphi(A)$, and let $q \in \tS^\aff_{\varphi^\op}(A)$.
  Then
  \begin{gather*}
    dp(q) = dq(p).
  \end{gather*}
\end{lem}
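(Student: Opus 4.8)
The plan is to reduce the symmetry statement over the set $A$ to the symmetry statement over the structure $M$, which is already available from \autoref{thm:DoubleLimitFormula}. Since the double limit property is symmetric in its two arguments, both $\varphi$ and $\varphi^\op$ are stable in $\Th^\aff(M)$, so \autoref{thm:LocalStationary} applies to each of the singleton families $\{\varphi\}$ and $\{\varphi^\op\}$. First I would invoke it to produce the unique affinely-definable-over-$A$ extensions $p^M \in \tS^\aff_\varphi(M)$ of $p$ and $q^M \in \tS^\aff_{\varphi^\op}(M)$ of $q$. By \autoref{dfn:DefinitionOverSet} their definitions are precisely the $\varphi$- and $\varphi^\op$-definitions of $p$ and $q$, that is $dp = dp^M$ and $dq = dq^M$, and by \autoref{rmk:phi-type-def-over-A} these lie in $\overline{\cL^\aff_{\varphi^\op}}(A)$ and $\overline{\cL^\aff_\varphi}(A)$ respectively.

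Next I would apply the symmetry clause of \autoref{thm:DoubleLimitFormula}\autoref{item:DLF-def-and-sym-aff} to the two types $p^M$ and $q^M$ over the structure $M$, obtaining
\[
  dp^M(q^M) = dq^M(p^M).
\]
It then remains to identify each side with the corresponding evaluation over $A$. Since $dp^M$ is an affine $\varphi^\op$-predicate over $A$, it factors through the parameter-restriction map $\tS^\aff_{\varphi^\op}(M) \rightarrow \tS^\aff_{\varphi^\op}(A)$; as $q^M\rest_A = q$, this yields $dp^M(q^M) = dp(q)$. Symmetrically, $dq^M$ factors through $\tS^\aff_\varphi(M) \rightarrow \tS^\aff_\varphi(A)$ and $p^M\rest_A = p$, so $dq^M(p^M) = dq(p)$. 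Combining the three equalities gives $dp(q) = dq(p)$.

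The only genuinely delicate point is this last factorisation step: one must be sure that evaluating the over-$M$ definition $dp^M$ at the extended type $q^M$ returns the same value as evaluating the over-$A$ definition $dp$ at $q$. This is exactly where the hypothesis that $p^M$ and $q^M$ are affinely definable \emph{over $A$} (rather than merely over $M$) is used, via \autoref{rmk:phi-type-def-over-A}; without it the two sides of the symmetry equation would live over $M$ and could not be read off from the restrictions to $A$. Everything else is bookkeeping with the parameter-restriction maps, together with the independence of the definitions from the ambient model recorded in \autoref{thm:LocalStationary}\autoref{item:LocalStationary-preceq-aff}.
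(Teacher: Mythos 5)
Your proposal is correct and follows essentially the same route as the paper's own proof: both pass to the affinely-definable-over-$A$ extensions $p^M$ and $q^M$, apply the symmetry clause of \autoref{thm:DoubleLimitFormula}\autoref{item:DLF-def-and-sym-aff} over the structure $M$, and then use the fact that $dp = dp^M$ factors through the restriction to $A$ (so that $dp^M(q^M) = dp(q)$, and likewise on the other side). The paper's version is just a terser statement of exactly these steps.
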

\begin{proof}
  First of all, $dp\in\overline{\cL^\aff_{\varphi^\op}}(A)$, so $dp(q)$ makes sense.
  Since $dp = dp^M$, and $q^M$ extends $q$, this is equal to $dp^M(q^M)$.
  Similarly, $dq(p) = dq^M(p^M)$.
  Finally, by the symmetry property (\autoref{thm:DoubleLimitFormula}\autoref{item:DLF-def-and-sym-aff}), $dp^M(q^M) = dq^M(p^M)$.
\end{proof}

\section{Stability and independence}
\label{sec:Independence}

\begin{dfn}
  Let us consider a complete affine theory $T$.
  \begin{itemize}
  \item For a tuple of variables $x$, let $\Phi^\staff_x$ denote the collection of affine formulas $\varphi(x,y)$ that are stable in $T$.
  \item If $A \subseteq M \vDash T$ and $a \in M^x$, then we define $\tp^\staff(a/A) = \tp^\aff_{\Phi^\staff_x}(a)$.
    We call it the \emph{stable (in $T$) affine type} of $a$ over $A$.
  \item Similarly, we define $\tS^\staff_x(A) = \tS^\aff_{\Phi^\staff_x}(A)$, the space of stable affine types over $A$.
  \end{itemize}
\end{dfn}

\begin{dfn}
  Let $M$ be a structure and let $A,B,C \subseteq M$ be subsets, whose unions we denote by juxtaposition (i.e., $AB = A \cup B$ and so on).
  We say that $A$ and $C$ are \emph{stably affinely independent} over $B$, in symbols $A \ind[\staff]_B\ \  C$, or just $A \ind_B C$ in the present context, if for every finite tuple $a \in A^x$, the stable affine type $\tp^\staff(a/BC)$ does not fork over $B$ (in the sense of $\Th^\aff(M)$).
\end{dfn}

Observe that $A \ind_B C$ if and only if under some (any) enumeration of $A$, the type $\tp^\staff(A/BC)$ does not fork over $B$.
Yet equivalently, if and only if for every pair of finite tuples $a \in A^x$ and $c \in (BC)^y$, and every stable (in $T$) affine formula $\varphi(x,y)$:
\begin{gather*}
  \varphi(a,c) = d_\varphi p(c) = d_\varphi p(q),
\end{gather*}
where $p = \tp^\staff(a/B)$ and $q = \tp^\staff(c/B)$ (or even just $\tp_\varphi(a/B)$ and $\tp_{\varphi^{\op}}(c/B)$, respectively).

The asymmetry in the role of the basis $B$ (added to $C$, but not to $A$) is inevitable if we want to have transitivity in \autoref{thm:StableIndependence}.
When $T$ is stable, this asymmetry disappears.

\begin{lem}
  \label{lem:FullStabilityLeftBasis}
  Assume that the ambient affine theory is stable.
  Then $A \ind_B C$ if and only if $AB \ind_B C$.
\end{lem}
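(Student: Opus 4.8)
The plan is to prove the two implications separately, the forward one being trivial and the reverse one carrying all the content; the hypothesis of stability is used precisely to identify $\tp^\staff$ with the full affine type $\tp^\aff$ and, more subtly, to allow a free regrouping of variables. For the easy implication, if $AB \ind_B C$ then $A \ind_B C$, since every finite tuple from $A$ is in particular a finite tuple from $AB$, so the non-forking condition defining $AB \ind_B C$ applies to it verbatim.

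For the converse, assume $A \ind_B C$. A finite tuple from $AB$ may be written, after reordering variables, as $ab$ with $a \in A^x$ and $b \in B^{x'}$, and I must show that $\tp^\aff(ab/BC)$ does not fork over $B$; by \autoref{dfn:NonForkingExtension} it suffices to fix an affine formula $\varphi(xx',y)$ and check that its definition is affine over $B$. The key device is to regroup the variables, setting $z = (x',y)$ and $\psi(x,z) = \varphi(x,x',y)$; since $T$ is affinely stable, $\psi$ is again a stable affine formula. This is where full stability is essential, as stability is not in general preserved under regrouping variables across the two sides. Because $b \in B \subseteq BC$, every pair $(b,c)$ with $c \in (BC)^y$ is a tuple in $(BC)^z$, so $A \ind_B C$ applied to $a$ and $\psi$ yields, for all such $c$,
\begin{gather*}
  \varphi(a,b,c) = \psi\bigl(a,(b,c)\bigr) = d_\psi p(b,c),
\end{gather*}
where $p = \tp^\aff_\psi(a/B)$ and $d_\psi p \in \overline{\cL^\aff_{\psi^\op}}(B)$ is its definition. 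Substituting the fixed parameter $b \in B$ into the first block of variables, the predicate $\delta(y) := d_\psi p(b,y)$ is an affine $\varphi^\op$-predicate over $B$.

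It remains to identify $\delta$ with $d_\varphi \tp^\aff(ab/BC)$. For this I would realise the non-forking $\psi$-extension: choose $M \preceq^\aff M'$ and $a^* \in (M')^x$ realising $p^M = (\tp^\aff_\psi(a/B))^M$, so that $\psi(a^*,z_0) = d_\psi p(z_0)$ for every $z_0 \in M^z$ (using \autoref{thm:LocalStationary}, where the definition does not depend on passing to $M$). Evaluating at $z_0 = (b,y)$ shows that $\tp^\aff_\varphi(a^*b/M)$ has definition $\delta$, hence is affinely definable over $B$; and since $A \ind_B C$ makes $\tp^\aff_\psi(a^*/BC) = \tp^\aff_\psi(a/BC)$ the non-forking extension of $p$, one checks that $\tp^\aff_\varphi(a^*b/BC) = \tp^\aff_\varphi(ab/BC)$. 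Thus $\tp^\aff_\varphi(a^*b/M)$ is an extension of $\tp^\aff_\varphi(ab/BC)$ to $M$ that is affinely definable over $B \subseteq BC$; by the uniqueness clause of \autoref{thm:LocalStationary} it is the canonical extension, whence $d_\varphi \tp^\aff(ab/BC) = \delta$ is affine over $B$. This gives that $\tp^\aff(ab/BC)$ does not fork over $B$, and as $\varphi$ and the tuple were arbitrary, $AB \ind_B C$.

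The main obstacle is the bookkeeping in this final identification: one must ensure that substituting $b$ into the $\psi$-definition produces exactly the $\varphi$-definition of the correct type over $BC$, and not merely a predicate agreeing with it on the realised parameters $c \in (BC)^y$ (which, over a set rather than a model, need not have dense convex hull). Routing the identification through the uniqueness of the affinely-definable-over-$B$ extension of \autoref{thm:LocalStationary}, by way of a realisation $a^*$ of the non-forking $\psi$-extension, is what makes this rigorous and sidesteps any density question.
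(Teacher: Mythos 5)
Your proof is correct, but it takes a genuinely different route from the paper's. The paper's argument is global and type-theoretic: for $a \in A^x$, $b \in B^y$, it sets $p = \tp^\aff(ab/B)$ and $q = \tp^\aff(a/B)$, realises the non-forking extension $p^{BC}$ by a tuple $a'b'$, observes that $b' = b$ because $d(y,b)^p = 0$ (here full stability is used so that the distance formula is among the stable ones), and then invokes stationarity: since $a$ and $a'$ both realise $q^{BC}$, they have the same full affine type over $BC$, whence $ab \equiv^\aff_{BC} a'b' \vDash p^{BC}$. Your argument instead works formula by formula, using the variable-regrouping trick $\psi(x,(x',y)) = \varphi(x,x',y)$ and substituting the parameter $b$ into the $\psi$-definition; you correctly identify that full stability enters precisely to guarantee stability of the regrouped $\psi$. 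Both uses of the hypothesis are essentially the same phenomenon in different clothing. The paper's proof is shorter and hides the bookkeeping inside \autoref{thm:LocalStationary}; yours is more local and makes visible exactly which instances of stability are consumed. The one place where you wave your hands --- ``one checks that $\tp^\aff_\varphi(a^*b/BC) = \tp^\aff_\varphi(ab/BC)$'' --- does require an argument beyond agreement on $BC$-instances, since these types are states on $\overline{\cL^\aff_\varphi}(BC)$, which may strictly contain the closed span of $BC$-instances; but it goes through, because for any $\chi(x,x') \in \overline{\cL^\aff_\varphi}(BC)$ the substituted predicate $\chi(x,b)$ lies in $\overline{\cL^\aff_\psi}(BC)$, and $a$ and $a^*$ agree on all of $\overline{\cL^\aff_\psi}(BC)$, both realising $p^{BC}$ there. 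Spelling out that one line would make your proof complete.
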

\begin{proof}
  Assume that $A \ind_B C$.
  Let $a \in A^x$ and $b \in B^y$.
  Let $p = \tp^\aff(ab/B)$, $q = p\rest_x = \tp^\aff(a/B)$, and let $a'b'$ realise $p^{BC}$.
  Then $a'$ realises $q^{BC}$, and $d(b',b) = d(y,b)^p = 0$, so $b' = b$.
  On the other hand, $a \vDash q^{BC}$ by hypothesis, so $a \equiv^\aff_{BC} a'$, and therefore $ab \equiv^\aff_{BC} a'b$.
  We conclude that $ab \vDash p^{BC}$.
  Therefore, $AB \ind_B C$.
  The opposite implication is immediate.
\end{proof}

\begin{thm}
  \label{thm:StableIndependence}
  Let $T$ be a complete affine theory.
  The stable affine independence relation $\ind$ on subsets of models of $T$ possesses the following properties.
  \begin{itemize}
  \item \emph{Invariance:} the relation $A \ind_B C$ only depends on the affine type $\tp^\aff(A,B,C)$ (under any enumeration of the sets).
    In particular, the relation is invariant under automorphisms, and if $A,B,C \subseteq M \preceq^\aff N$, then $A \ind_B C$ if and only if $A \ind_B C$ in $N$.
  \item \emph{Finite character:} $A \ind_B C$ if and only if $A_0 \ind_B C$ for every finite subset $A_0 \subseteq A$.
  \item \emph{Conditional symmetry:} if $B \subseteq A \cap C$, then $A \ind_B C$ if and only if $C \ind_B A$.
  \item \emph{Transitivity:} $A \ind_B CD$ if and only if $A\ind_B C$ and $A \ind_{BC} D$.
  \item \emph{Extension:} for every $A,B,C \subseteq M$ there exists an affine extension $N \succeq^\aff M$ and $A' \subseteq N$ such that $A \equiv^\aff_B A'$ and $A' \ind_B C$.
  \item \emph{Local character:} for every $A,C \subseteq M$ there exists a subset $B \subseteq C$ such that $|B| \leq |A| + \fd_T(\cL)$ and $A \ind_B C$.
  \item \emph{Stable stationarity:} if $A \ind_B C$, $A' \ind_B C$ and $A \equiv^\staff_B A'$ (under some enumeration of $A$ and $A'$), then $A \equiv^\staff_{BC} A'$ (under the same enumeration).
  \end{itemize}
  If $T$ is stable, then the independence relation satisfies stronger versions of the symmetry and stationarity axioms:
  \begin{itemize}
  \item \emph{Symmetry:} $A \ind_B C$ if and only if $C \ind_B A$.
  \item \emph{Stationarity:} if $A \ind_B C$, $A' \ind_B C$ and $A \equiv^\aff_B A'$ (under some enumeration of $A$ and $A'$), then $A \equiv^\aff_{BC} A'$ (under the same enumeration).
  \end{itemize}

  Conversely, assume that $\ind[\prime]$ is a relation on subsets of models of $T$ that satisfies all of the above, including full symmetry and stationarity.
  Then $T$ is stable, and $\ind[\prime]$ coincides with $\ind$.
\end{thm}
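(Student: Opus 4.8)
For the forward direction, I would verify the listed properties one at a time, leaning on the local calculus of Section~\ref{sec:NonForking}. \emph{Invariance} and \emph{finite character} are immediate from the definition, since the forking of $\tp^\staff(a/BC)$ over $B$ is tested on finite tuples $a \in A^x$ and is expressed entirely through the automorphism-invariant $\varphi$-definitions $d_\varphi p$, which moreover do not depend on the ambient model by Theorem~\ref{thm:LocalStationary}\ref{item:LocalStationary-preceq-aff}. \emph{Transitivity} follows by applying Lemma~\ref{lem:LocalTransitivity} to each finite $a$ along the chain $B \subseteq BC \subseteq BCD$. \emph{Stable stationarity} is precisely the uniqueness clause of Theorem~\ref{thm:LocalStationary}\ref{item:LocalStationary-extention}: two non-forking extensions over $BC$ with a common restriction $\tp^\staff(a/B)$ are both the unique $B$-definable extension, hence equal. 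For \emph{conditional symmetry} I would use the pairwise criterion recorded after the definition of independence together with Lemma~\ref{lem:LocalSymmetry}; since $\Phi^\staff$ is closed under $\varphi \mapsto \varphi^\op$ (the double limit property being symmetric), the equalities $\varphi(a,c) = d_\varphi p(q) = d_{\varphi^\op} q(p)$ make non-forking symmetric in the pair $(a,c)$, and the hypothesis $B \subseteq A \cap C$ guarantees $BC = C$ and $BA = A$, so the parameter sets on the two sides match exactly.

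The two less formal forward properties are extension and local character. For \emph{extension} I would pass to a monster model, set $q = \tp^\aff(a/B)$, and apply Theorem~\ref{thm:LocalStationary}\ref{item:LocalStationary-extention}--\ref{item:LocalStationary-compatibility} to $q\rest_{\Phi^\staff}$: its $B$-definable extension to $C$ is consistent with $q$, so a common realisation $a'$ satisfies $a' \equiv^\aff_B a$ and $a' \ind_B C$. For \emph{local character}, for each finite $a \in A^x$ and each $\varphi \in \Phi^\staff$ the definition $d_\varphi \tp^\staff(a/C)$ is an affine $\varphi^\op$-predicate over $C$, hence uses only countably many parameters from $C$; collecting these over all finite $a$ and all $\varphi$ yields $B \subseteq C$ with $|B| \leq |A| + \fd_T(\cL)$ over which every relevant definition is affine, i.e.\ $A \ind_B C$. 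When $T$ is stable, $\Phi^\staff_x$ is all of $\cL^\aff_x$, so $\tp^\staff = \tp^\aff$ and \emph{full stationarity} is literally \emph{stable stationarity}; \emph{full symmetry} then follows by combining Lemma~\ref{lem:FullStabilityLeftBasis} (to add the base on the left), transitivity together with the trivial independence $E \ind_D B$ valid whenever $B \subseteq D$ (to add the base on the right), and conditional symmetry.

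For the converse I first deduce stability. Fix a finite tuple $x$, put $\mu = \fd_T(\cL)$ and $\kappa = 2^\mu$, and let $M \vDash T$ with $\fd(M) \leq \kappa$, say with dense subset $M_0$ of size $\leq \kappa$. Realise each $p \in \tS^\aff_x(M)$ by some $a_p$; by local character of $\ind[\prime]$ (applied with $C = M_0$) there is $B_p \subseteq M_0$ with $|B_p| \leq \mu$ and $a_p \ind[\prime]_{B_p} M_0$. If $B_p = B_{p'}$ and $\tp^\aff(a_p/B_p) = \tp^\aff(a_{p'}/B_p)$, full stationarity of $\ind[\prime]$ forces $\tp^\aff(a_p/M_0) = \tp^\aff(a_{p'}/M_0)$, so $p \mapsto (B_p,\tp^\aff(a_p/B_p))$ is injective. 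As there are at most $\kappa^\mu = \kappa$ bases and at most $2^\mu = \kappa$ states over each, $\tS^\aff_x(M)$ has at most $\kappa$ points. Consequently $\tS^\aff_\varphi(M)$, a quotient, has density character $\leq \kappa$ for every affine $\varphi$, so each $\varphi$ meets condition~\ref{item:thm:stable-in-T-equiv:DensityExists} of Theorem~\ref{thm:stable-in-T-equivalences} and $T$ is stable.

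Once $T$ is stable, $\ind$ is defined and, by the forward direction, satisfies all the axioms, including symmetry and stationarity over arbitrary sets; it remains to show $\ind[\prime] = \ind$. I would prove two inclusions. The genuinely hard one, and the main obstacle, is $\ind[\prime] \subseteq \ind$: this is the affine avatar of the canonicity principle that forking is the weakest independence relation admitting local character. I expect to establish it by building, from $a \ind[\prime]_B C$, a $\ind[\prime]$-independent $B$-indiscernible sequence inside $\tp^\aff(c/B)$ for $c \in C$ (via iterated extension and local character of $\ind[\prime]$), and using its independence together with the symmetry and stationarity of $\ind[\prime]$ to force the $\varphi$-definitions of $\tp^\staff(a/BC)$ to stay affine over $B$; the technical cost is in setting up Morley sequences and indiscernibility in the affine setting. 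The reverse inclusion $\ind \subseteq \ind[\prime]$ is then formal: given $a \ind_B C$, use extension for $\ind[\prime]$ to obtain $a' \equiv^\aff_B a$ with $a' \ind[\prime]_B C$; by the hard inclusion $a' \ind_B C$, so full stationarity of $\ind$ gives $a \equiv^\aff_{BC} a'$, and invariance of $\ind[\prime]$ transfers $a' \ind[\prime]_B C$ to $a \ind[\prime]_B C$. This yields $\ind[\prime] = \ind$ and completes the converse.
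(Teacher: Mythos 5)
Your forward direction and your derivation of stability in the converse both follow the paper's proof essentially step for step (invariance and finite character from the definition, transitivity from \autoref{lem:LocalTransitivity}, conditional symmetry from \autoref{lem:LocalSymmetry}, stable stationarity from the uniqueness clause of \autoref{thm:LocalStationary}, extension by realising $p^{BC}$, local character by collecting the parameters of the definitions, and the type-counting argument feeding into \autoref{thm:stable-in-T-equivalences}\autoref{item:thm:stable-in-T-equiv:DensityExists}). The reduction of one inclusion of $\ind[\prime] = \ind$ to the other via extension, stationarity and invariance is also correct and standard.

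The genuine gap is the inclusion you yourself flag as ``the main obstacle'': you do not prove it, you only announce a plan (``I expect to establish it by building \dots a $B$-indiscernible sequence \dots the technical cost is in setting up Morley sequences and indiscernibility in the affine setting''). That plan is not obviously executable here: extracting indiscernibles requires Ramsey/Erd\H{o}s--Rado machinery that the paper has not developed in the affine setting, and it is not needed. The paper's argument requires no indiscernibility at all. Starting from $a \ind_B c$ (finite tuples), one uses $\ind[\prime]$-extension to build a long sequence $(c_i : i < \kappa^+)$ with $c_i \equiv^\aff_B c$ and $c_i \ind[\prime]_B c_{<i}$ --- independent, but not indiscernible. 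Then $\ind$-extension places $a$ (after moving the sequence) with $a \ind_B c_{<\kappa^+}$, and $\ind$-transitivity, stationarity and symmetry give $c \equiv^\aff_{Ba} c_i$ for \emph{every} $i$. Now $\ind[\prime]$-local character and $\ind[\prime]$-transitivity produce some $i$ with $a \ind[\prime]_{Bc_{<i}} c_i$, and the base is pushed down to $B$ using symmetry, transitivity and the independence built into the sequence, yielding $a \ind[\prime]_B c_i$; invariance and $c \equiv^\aff_{Ba} c_i$ then give $a \ind[\prime]_B c$. Since both relations satisfy all the axioms once $T$ is known to be stable, the identical argument with the roles of $\ind$ and $\ind[\prime]$ exchanged gives the other inclusion, so a single symmetric argument settles $\ind = \ind[\prime]$. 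You should either carry out a construction along these lines or justify the indiscernibility extraction you propose; as written, the central step of the converse is missing.
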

\begin{proof}
  The proof is entirely standard, and we only include it for the sake of completeness.

  Invariance and the finite character are immediate from the definition.
  Conditional symmetry follows from \autoref{lem:LocalSymmetry}.
  Transitivity follows from \autoref{lem:LocalTransitivity}.
  For extension, let $p = \tp^\staff(A/B)$, and let $A'$ realise $p^{BC}$.
  For the local character, let $p = \tp^\staff(A/C)$, and choose a set of parameters sufficient to define $d_\varphi p$ for every stable $\varphi$.
  Stable stationarity follows from the uniqueness clause of \autoref{thm:LocalStationary}\autoref{item:LocalStationary-extention}.
  If all affine formulas are stable in $T$, then this becomes full stationarity, and full symmetry follows by \autoref{lem:FullStabilityLeftBasis}.

  For the converse, let us fix $\kappa = \fd_T(\cL)$.
  Let $M \vDash T$ have density character at most $2^\kappa$.
  If $M_0 \subseteq M$ is dense and $|M_0| \leq 2^\kappa$, then $|M| \leq |M_0|^{\aleph_0} \leq 2^\kappa$.
  Let $x$ be a finite tuple of variables, and $p \in \tS^\aff_x(M)$, which we may realise by $a$ in some affine extension.
  By local character, there exists a subset $B \subseteq M$ such that $|B| \leq \kappa$ and $a \ind[\prime]_B M$.
  By stationarity, $p$ is uniquely determined by its restriction to $B$.
  There are at most $|M|^\kappa \leq 2^\kappa$ possible subsets $B$, and for each $B$, there are at most $|\bR|^{\fd\bigl( \cL_x^\aff(B) \bigr)} \leq 2^\kappa$ possibilities for $p\rest_B$.
  Therefore, $|\tS^\aff_x(M)| \leq 2^\kappa$.
  By \autoref{thm:stable-in-T-equivalences}\autoref{item:thm:stable-in-T-equiv:DensityExists}, every affine formula $\varphi(x,y)$ is stable in $T$.
  Since $x$ was arbitrary, $T$ is stable.

  In order to show that $\ind = \ind[\prime]$, let us assume that $a \ind_B c$, where $a$ and $c$ are finite tuples.
  By $\ind[\prime]$-extension, in a sufficiently saturated affine extension, we may construct a sequence $(c_i : i < \kappa^+)$ such that $c_i \equiv^\aff_B c$ and $c_i \ind[\prime]_B c_{<i}$.
  By $\ind$-extension, we may find $a' \equiv^\aff_B a$ such that $a' \ind_B c_{<\kappa^+}$.
  Moving the family $c_{<\kappa^+}$ (and using invariance), we may even assume that $a' = a$.
  In particular, $a \ind_B c_i$ for every $i < \kappa^+$ (by $\ind$-transitivity).
  By $\ind$-stationarity and $\ind$-symmetry: $c \equiv^\aff_{Ba} c_i$ for every $i < \kappa^+$.
  On the other hand, by the $\ind[\prime]$-local character, there exists a subset $C_0 \subseteq Bc_{<\kappa^+}$ such that $|C_0| \leq \kappa$ and $a \ind[\prime]_{C_0} B c_{<\kappa^+}$.
  Using $\ind[\prime]$-transitivity, there exists $i < \kappa$ such that $a \ind[\prime]_{B c_{<i}} c_{< \kappa^+}$.
  Using various properties of $\ind[\prime]$, we deduce that $a \ind[\prime]_{B c_{<i}} c_i$, $c_i \ind[\prime]_{Bc_{<i}} a$, $c_i \ind[\prime]_B a c_{<i}$, $c_i \ind[\prime]_B a$, and finally, $a \ind[\prime]_B c_i$.
  Since $c \equiv^\aff_{Ba} c_i$, and by $\ind[\prime]$-invariance: $a \ind[\prime]_B c$.
  The exact same argument works the other way round, and by the finite character, this is enough.
\end{proof}

The \emph{affine definable closure} of a subset $B\subseteq M$ is the set of $a\in M$ such that $\{a\}$ is an affinely definable set (equivalently, the distance function $d(a,y)$ belongs to $\overline{\cL^\aff_y}(B)$).
We denote it by $\dcl^\aff(B)$.

The following is also completely standard, though in the standard case one needs to employ the algebraic closure.

\begin{lem}
  \label{lem:indep-antireflexivity}
  Let $A,B,C$ be subsets of a structure $M$, and suppose moreover that $A$ and $C$ are contained in a sort with a stable metric.
  If $A\ind_B C$, then $A\cap C\subseteq \dcl^\aff(B)$.
\end{lem}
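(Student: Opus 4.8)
The plan is to run the whole argument for the single stable formula $\varphi(x,y) = d(x,y)$, where $d$ is the metric of the sort containing $A$ and $C$. The hypothesis that this metric is stable is exactly what places $\varphi$ in $\Phi^\staff_x$, so that the non-forking calculus of \autoref{dfn:NonForkingExtension} and the definability supplied by \autoref{thm:LocalStationary} apply to it. I would fix $a \in A \cap C$, regarded as a single element of that sort, and aim to show that $d(a,\cdot)$ is an affine definable predicate over $B$.

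The first step is to feed $a$ into the independence hypothesis in its double role. Since $a$ is a finite tuple from $A$ and is simultaneously a parameter in $BC$ (because $a \in C$), the characterisation of $A \ind_B C$ recorded just after the definition of the independence relation, applied to $\varphi = d$ with the parameter tuple taken to be $a$ itself, yields $d(a,a) = d_\varphi p(a)$, where $p = \tp^\staff(a/B)$ and $d_\varphi p \in \overline{\cL^\aff_{\varphi^\op}}(B)$ is the ($B$-affine) $\varphi$-definition of $p$. As $d(a,a) = 0$, this gives $d_\varphi p(a) = 0$.

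It remains to convert this into the concrete assertion that $d(a,\cdot)$ is definable over $B$. For that I would realise the non-forking extension $p_\varphi^M \in \tS^\aff_\varphi(M)$ of the $\varphi$-type $\tp^\aff_\varphi(a/B)$—the unique extension to $M$ that is affinely definable over $B$, as provided by \autoref{thm:LocalStationary}—by some $a^*$ in an affine extension $N \succeq^\aff M$. Since $d_\varphi p$ is by construction the definition of $p_\varphi^M$, we have $d(a^*,c) = \varphi(a^*,c) = d_\varphi p(c)$ for every $c \in M$; in particular, at $c = a$ we obtain $d(a^*,a) = d_\varphi p(a) = 0$, so $a^* = a$. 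Thus $a$ itself realises $p_\varphi^M$, whence $d(a,y) = d_\varphi p(y)$ for all $y \in M$. Since $d_\varphi p$ lies in $\overline{\cL^\aff_{\varphi^\op}}(B) \subseteq \overline{\cL^\aff_y}(B)$, the function $d(a,\cdot)$ is an affine definable predicate over $B$, which is precisely the statement $a \in \dcl^\aff(B)$.

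The conceptual heart of the argument, where the stability input is concentrated, is the evaluation of the $B$-definition of the $d$-type of $a$ at the parameter $a$ itself: this is exactly what the inclusion $a \in C$ buys. I expect the only genuinely delicate points to be bookkeeping ones—confirming that evaluating $d_\varphi p$ at $a$ returns the intended value $\varphi(a,a)$, which requires $a$ to be an admissible parameter (guaranteed by $a \in C \subseteq BC$), and noting that agreement of $d(a,\cdot)$ with the continuous predicate $d_\varphi p$ on the dense set $M$ suffices to place it in $\overline{\cL^\aff_y}(B)$.
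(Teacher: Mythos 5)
Your proof is correct and follows essentially the same route as the paper's: both take $\varphi = d$, use $a \in C$ to evaluate the $B$-affine $\varphi$-definition of the type of $a$ at the parameter $a$ itself to get $0$, deduce that $a$ realises its own non-forking extension, and conclude that $d(a,y) = d_\varphi p(y)$ is a $B$-definable predicate. The only cosmetic difference is that the paper phrases this via a realisation $a'$ of $p^M$ for $p = \tp^\aff(a/BC)$, while you work with $\tp^\staff(a/B)$ and its non-forking extension; these have the same definition, so the arguments coincide.
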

\begin{proof}
  Suppose $A\ind_B C$ and let $a\in A\cap C$, $\varphi(x,y) = d(x,y)$ (the distance in the sort of $a$) and $p = \tp^\aff(a/BC)$. If $a'$ is a realisation of $p^M$ in an affine extension of $M$, then $\varphi(a',a) = d_\varphi p(a) = \varphi(a,a) = 0$, so $a'=a$. Hence $d(a,y) = d_\varphi p(y)$ is a definable predicate with parameters in $B$.
\end{proof}

Let us now point out that non-forking in a stable affine theory $T$ corresponds to non-forking in continuous logic with respect to its convex realisation completion~$T_\crc$.

\begin{prp}
  \label{prp:non-forking-cont-vs-aff}
  Let $T$ be an affinely stable theory, and let $M$ be a model of $T_\crc$.
  Let $A\subseteq B\subseteq M$ and let $a\in M^x$.
  Then $\tp^\aff(a/B)$ does not fork over $A$, in the sense of \autoref{dfn:NonForkingExtension}, if and only if $\tp^\cont(a/B)$ does not fork over $A$, in the sense of continuous logic.
\end{prp}
\begin{proof}
  Possibly passing to an elementary extension, we may assume that $M$ is highly saturated and homogeneous in continuous logic.
  On the other hand, the canonical restriction map $\rho^\aff\colon \tS_x^\cont(M) \to \tS_x^\aff(M)$ is a homeomorphism (see \cite[Cor.~17.11]{BenYaacov-Ibarlucia-Tsankov:AffineLogic}).
  It is also equivariant for the action of $\Aut(M/A)$.

  Let $p = \tp^\aff(a/B)$, and let $q\in \tS_x^\cont(M)$ be the unique type with $\rho^\aff(q) = p^M$.
  If $p = \tp^\aff(a/B)$ does not fork over $A$ in affine logic, then $p^M$ is affinely definable over $A$, and hence $p^M$ and $q$ are invariant under $\Aut(M/A)$.
  By stability of $T_\crc$ (\autoref{cor:T-stable-TCR-stable}) and saturation and homogeneity of $M$, $q$ is definable over $A$ in continuous logic.
  Therefore, $q\rest_B = \tp^\cont(a/B)$ does not fork over $A$, in the sense of $T_\crc$.

  Conversely, suppose that $\tp^\cont(a/B)$ does not fork over $A$ in continuous logic.
  Note that $q\rest_B = \tp^\cont(a/B)$, because the restriction map $\tS_x^\cont(B) \to \tS_x^\aff(B)$ is also a homeomorphism (in particular, injective).
  On the other hand, by the same argument as above, $q$ is non-forking over $B$.
  By transitivity, it is also non-forking over $A$.
  Now consider $r = \tp^\aff(a/A)$ and let $q'\in\tS_x^\cont(M)$ with $\rho^\aff(q') = r^M$.
  Again as before, $q'$ is non-forking over $A$, and in addition $q'\rest_A = \tp^\cont(a/A) = q\rest_A$.
  Since $q$ and $q'$ are two non-forking extensions in continuous logic of a type over $A$, they must be conjugate by an automorphism in $\Aut(M/A)$.
  This automorphism sends $p^M$ to $r^M$, but as the latter is definable over $A$, we have $p^M = r^M$.
  We conclude that $p$ does not fork over $A$, in affine logic.
\end{proof}

\section{A remark regarding Lascar types}
\label{sec:LascarType}

We recall that in either classical or continuous logic, equality of \emph{Lascar strong types} over a set $A$ is the finest bounded $A$-invariant equivalence relation; equivalently, it is the transitive closure of equality of types over models that contain $A$.
It follows easily from either definition that if $a$ is algebraic over $A$, then $a$ is the unique realisation of its Lascar strong type over $A$.

From \autoref{prp:non-forking-cont-vs-aff} and \autoref{lem:indep-antireflexivity} it follows that if $T$ is an affinely stable theory, then in the continuous logic theory $T_\crc$, algebraic and definable closure coincide in the basic sorts (though not necessarily in quotient sorts interpretable in $T_\crc$).
This may be compared with \cite[Cor.~5.9]{BenYaacov:RandomVariables}, where this is proved for atomless randomisations, but with no stability hypothesis.
In fact, in \cite{BenYaacov:RandomVariables} this corollary is deduced from the stronger result that in arbitrary atomless randomisations, every type over a set (in the basic sorts) is Lascar strong.
We can also establish this, in full generality, for affine theories and their convex realisation completions.

\begin{thm}
  Let $T$ be a complete affine theory and $M \vDash T$.
  Let $A \subseteq B \subseteq M$, and let $a,b\in M^x$ be such that $a\equiv^\aff_A b$.
  Then there exists an affine extension $N \succeq^\aff M$ and subset $B' \subseteq N$ such that $B' \equiv^\aff_A B$ (relative to some common enumeration) and $a \equiv^\aff_{B'} b$.
  In particular, $A \subseteq B'$.
\end{thm}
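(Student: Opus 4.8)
The plan is to reduce the statement to a convexity-plus-compactness argument, exploiting, as in the averaging arguments of the earlier sections, that affine type spaces over a set are the closed convex hulls of their realised types.

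First I would pass to a saturated, homogeneous affine extension. Replacing $M$ by an affinely $\kappa$-saturated and $\kappa$-homogeneous extension $M^* \succeq^\aff M$ with $\kappa > |A| + |x| + |\cL| + \aleph_0$ (using \cite[Prop.~6.8, Cor.~17.14]{BenYaacov-Ibarlucia-Tsankov:AffineLogic}) alters none of the relevant affine types over $A$ or over $B$, and it suffices to produce $B'$ inside some affine extension $N \succeq^\aff M^*$. Since $a\equiv^\aff_A b$, homogeneity furnishes $g\in\Aut(M^*/A)$ with $g(a)=b$; this is exactly the homogeneity input already used in the proof of \autoref{thm:LocalStationary}. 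The role of $g$ is the identity $\varphi(b,d)=\varphi(a,g^{-1}d)$, valid for every affine formula $\varphi(x,y)$ (whose parameters, if any, lie in $A$ and are fixed by $g^{-1}$) and every tuple $d$, together with the fact that $g^{-1}$ preserves affine types over $A$.

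The heart is an averaging step. Enumerate $B\setminus A$ by a tuple of variables $z$, keeping $A$ as fixed parameters, so that any realisation automatically contains $A$. For each $n$ consider the convex combination
\[
  p_n = \frac{1}{n}\sum_{k<n} \tp^\aff\bigl(g^{-k}(B\setminus A)\,/\,Aab\bigr) \in \tS^\aff_z(Aab),
\]
which is a genuine affine type since $\tS^\aff_z(Aab)$ is a compact convex set containing all realised types. Evaluating an affine predicate $\psi(z)$ over $A$ gives $\psi(z)^{p_n}=\frac1n\sum_{k<n}\psi(g^{-k}(B\setminus A))=\psi(B\setminus A)$, because each $g^{-k}$ preserves affine types over $A$; thus $p_n$ restricts to $\tp^\aff(B\setminus A/A)$, and together with the fixed $A$ it describes a copy of $B$ over $A$. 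On the other hand, writing $c_w$ for the subtuple of $B\setminus A$ matching a subtuple $z_w$ of $z$, the telescoping identity
\[
  \varphi(a,z_w)^{p_n} - \varphi(b,z_w)^{p_n}
  = \frac1n\sum_{k<n}\bigl(\varphi(a,g^{-k}c_w)-\varphi(a,g^{-(k+1)}c_w)\bigr)
  = \frac1n\bigl(\varphi(a,c_w)-\varphi(a,g^{-n}c_w)\bigr)
\]
has absolute value at most $2\|\varphi\|_\infty/n$. So $p_n$ makes $a$ and $b$ agree over the copy, approximately and uniformly in the parameters.

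Finally I would upgrade ``approximate'' to ``exact'' by compactness. Let $\Sigma(z)$ be the partial affine type over $Aab$ asserting $\psi(z)=\psi(B\setminus A)$ for all affine $\psi$ over $A$, and $\varphi(a,z_w)=\varphi(b,z_w)$ for all affine $\varphi(x,y)$ and all finite subtuples $z_w$. Each of these conditions is closed on $\tS^\aff_z(Aab)$, and for any finite $\Delta\subseteq\Sigma$ and $\varepsilon>0$ the type $p_n$ with $n$ large lies in the (hence non-empty) closed set of types meeting $\Delta$ with the $\psi$-conditions exactly and the $\varphi$-conditions within $\varepsilon$. These closed sets have the finite intersection property, so by compactness of $\tS^\aff_z(Aab)$ there is a type realising $\Sigma$ exactly; realising it by $c$ in an affine extension $N\succeq^\aff M^*$ and setting $B'=A\cup c$ yields $B'\equiv^\aff_A B$ (under the common enumeration $A$ followed by $B\setminus A$), $a\equiv^\aff_{B'}b$, and $A\subseteq B'$, as required. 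The step I expect to be the main obstacle is precisely this passage from approximate to exact: the averaging only drives the discrepancy to $0$ like $1/n$, and it is the compactness of the affine type space that converts the sequence $(p_n)$ into a single copy $B'$ over which $a$ and $b$ genuinely have the same affine type. The other load-bearing ingredient is the automorphism $g$, which is available from the cited saturation and homogeneity.
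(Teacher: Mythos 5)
Your proof is correct, but it is packaged quite differently from the paper's. The paper reduces to $B=Ac$ with $c$ finite, assumes the desired set of conditions inconsistent, invokes the Compactness Theorem for affine logic to extract a \emph{single} violated affine combination $\inf_y\bigl(\varphi(a,y)-\varphi(b,y)+\psi(y)\bigr)=1$ with $\psi(c)=0$, and then builds a sequence $a_0=a$, $a_1=b$, $a_{n+2}a_{n+1}\equiv^\aff_A a_{n+1}a_n$ along which $\varphi(a_0,c)-\varphi(a_n,c)\geq n$, contradicting boundedness. Your argument is the constructive mirror image of this: instead of moving $a$ along an abstract ``Lascar-type'' sequence and letting a telescoping sum blow up, you move $c$ along the orbit of an automorphism $g$ with $g(a)=b$ and let the Cesàro-averaged telescoping sum vanish, then pass to a limit by compactness of the type space. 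The underlying cancellation is the same, and both proofs exploit the affine/convex structure in an essential way — the paper through the form of its compactness theorem (a positive affine combination of the failed conditions fails uniformly), you through the fact that convex combinations of realised types are again affine types. What each buys: the paper's route is shorter and needs no saturation or homogeneity — the sequence $(a_n)$ is produced by mere realisability of types, whereas your $g$ requires passing to an affinely $\kappa$-saturated, $\kappa$-homogeneous extension (legitimately cited, and used elsewhere in the paper, e.g.\ in the proof of stationarity); on the other hand, your argument is direct rather than by contradiction, handles infinite $B$ without the preliminary reduction to a finite tuple, and isolates correctly the two load-bearing points (the $O(1/n)$ telescoping bound and the finite-intersection-property passage from approximate to exact). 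All the ingredients you invoke — compact convexity of $\tS^\aff_z(Aab)$, preservation of types over sets under affine extensions, surjectivity of the restriction maps and hence realisability of types over sets in affine extensions — are available in the framework, so the proof goes through.
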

\begin{proof}
  By compactness, it is enough to prove this when $B = Ac$, for some finite tuple $c \in M^y$.
  For this, let us show that the set of conditions
  \begin{gather*}
    \bigl\{ \psi(y) \leq \psi(c) : \psi \in \cL^\aff_x(A) \bigr\}
    \cup
    \bigl\{ \varphi(a,y)\leq \varphi(b,y) : \varphi\in\cL^\aff_{xy}(A) \bigr\}
  \end{gather*}
  is consistent.
  If not, then by the Compactness Theorem for Affine Logic, there exist formulas $\varphi(x,y),\psi(y) \in \cL^\aff(A)$ such that $\psi(c) = 0$, and
  \begin{gather*}
    \inf_y \, \bigl( \varphi(a,y) - \varphi(b,y) + \psi(y) \bigr) = 1.
  \end{gather*}
  Construct a sequence $(a_n : n \in \bN)$ such that $a_0 = a$, $a_1 = b$, and $a_{n+2} a_{n+1} \equiv^\aff_A a_{n+1} a_n$.
  Then $\varphi(a_0,c) - \varphi(a_n,c) \geq n$ for all $n$, which is absurd.
\end{proof}

\begin{cor}
  Let $T$ be a complete affine theory and $M \vDash T_\crc$.
  Let $A \subseteq B \subseteq M$, and let $a,b\in M^x$ such that $a \equiv^\aff_A b$.
  Then there exists an elementary extension $N \succeq^\cont M$ and subset $B' \subseteq N$ such that $B' \equiv^\cont_A B$ (relative to some common enumeration) and $a \equiv^\cont_{B'} b$.

  In particular, $a$ and $b$ have the same elementary type over a model (i.e., an elementary substructure of an ambient model) containing $A$, and therefore, the same Lascar type over~$A$.
\end{cor}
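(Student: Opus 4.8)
The plan is to transfer the affine theorem just proved into continuous logic, exploiting the fact that over subsets of models of $T_\crc$ the affine and continuous type spaces coincide. Recall from the proof of \autoref{prp:non-forking-cont-vs-aff} that for a model of $T_\crc$ and any small parameter set $S$ contained in it, the canonical restriction map $\tS_x^\cont(S) \to \tS_x^\aff(S)$ is a homeomorphism (via \cite[Cor.~17.11]{BenYaacov-Ibarlucia-Tsankov:AffineLogic}), so that for tuples $u,v$ realised in models of $T_\crc$ one has $u \equiv^\aff_S v$ if and only if $u \equiv^\cont_S v$; moreover, surjectivity of this map shows that continuous saturation entails affine saturation. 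Since $(T_\crc)_\aff \equiv T$, the structure $M$ is a model of $T$ in the affine sense, so the preceding theorem applies to it. I shall use injectivity of the restriction map to \emph{upgrade} affine equivalences to continuous ones, and surjectivity to \emph{realise} the relevant affine type inside a continuous extension.

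First I would pass to a continuous elementary extension $N \succeq^\cont M$ that is sufficiently saturated, hence (by the remark above) affinely saturated, large enough to realise types over $A \cup \{a,b\}$ in $|B|$ variables. Applying the theorem to $M \vDash T$ yields the consistency of the affine type over $A\cup\{a,b\}$ expressing that $\tp^\aff(z/A) = \tp^\aff(B/A)$ and that $\varphi(a,z)=\varphi(b,z)$ for every affine formula $\varphi$. As this is an affine type over the small set $A \cup \{a,b\} \subseteq N$, affine saturation of $N$ lets me realise it by some tuple $B' \subseteq N$, giving $B' \equiv^\aff_A B$ and $a \equiv^\aff_{B'} b$, with $A \subseteq B'$.

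Next I would upgrade these to continuous equivalences. Since $B \subseteq M$ and $B', a, b$ all lie in the model $N \vDash T_\crc$, the homeomorphism $\tS^\cont(\cdot) \to \tS^\aff(\cdot)$ over $A$ gives $B' \equiv^\cont_A B$, and the corresponding homeomorphism over $B'$ gives $a \equiv^\cont_{B'} b$. This proves the main assertion, with $N$ and $B'$ as required. For the final ``in particular'', I would first use Löwenheim--Skolem to choose an elementary substructure $M_0 \preceq^\cont M$ with $A \subseteq M_0$, and apply the main assertion with $B = M_0$. The resulting $B'$, satisfying $B' \equiv^\cont_A M_0$, then enumerates an elementary substructure $N_0 \preceq^\cont N$ containing $A$ (the continuous type of an enumeration of an elementary substructure encodes the Tarski--Vaught conditions), over which $a \equiv^\cont_{B'} b$ reads $a \equiv^\cont_{N_0} b$. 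Thus $a$ and $b$ have the same continuous type over a model containing $A$, and hence the same Lascar strong type over $A$, since equality of Lascar strong types is the transitive closure of equality of types over models.

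The main obstacle I anticipate is the careful bookkeeping around the two notions of saturation and the subset form of the homeomorphism $\tS^\cont(S) \to \tS^\aff(S)$: one must ensure that the affine type produced by the theorem — a priori only realisable in some affine extension — is genuinely realised \emph{inside} a continuous elementary extension of $M$. This is exactly where both halves of the restriction map are needed, surjectivity for realisability and injectivity for the upgrade, and where it is essential that all parameter sets involved ($A$, $B'$, and the elements $a,b$) sit inside models of $T_\crc$ so that both type spaces are available and identified.
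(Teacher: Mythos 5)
Your proof is correct and follows essentially the same route as the paper: apply the affine theorem just proved, then transfer to continuous logic using the fact that in models of $T_\crc$ affine types determine continuous types (and take $B$ to be a model containing $A$ for the ``in particular''). The only immaterial difference is in how the elementary extension $N$ containing $B'$ is produced --- the paper affinely extends the extension furnished by the theorem to a model of $T_\crc$, whereas you realise the relevant affine type inside a sufficiently saturated continuous elementary extension of $M$.
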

\begin{proof}
  The first assertion holds since every model of $T$ admits an affine extension to a model of $T_\crc$, and in models of $T_\crc$, affine types determine continuous types.
  The second assertion follows by taking $B = M$.
\end{proof}

Notice that this generalises and improves \cite[Thm.~5.8]{BenYaacov:RandomVariables}.
Indeed, it applies to any continuous theory of the form $T_\crc$, which include atomless randomisations as well as other theories, such as complete affine Poulsen theories.
In addition, it shortens the Lascar distance relative to models, in the sense of \cite[Def.~5.1]{BenYaacov:RandomVariables}, from two to one.

\begin{cor}
  Let $T$ be an affine theory and $M$ be a model of $T_\crc$.
  Then for every $A\subseteq M$, $\dcl^\cont(A) = \acl^\cont(A)$.
\end{cor}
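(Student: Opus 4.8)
The plan is to prove the non-trivial inclusion $\acl^\cont(A)\subseteq \dcl^\cont(A)$, the reverse inclusion holding in any continuous theory. Since both closures are computed inside $\Th^\cont(M)$, I would first pass to a sufficiently saturated and homogeneous elementary extension of $M$, and also arrange the ambient affine theory to be complete (replacing $T$ by an appropriate completion, which affects neither closure), so that the theorem and its corollary immediately above are at our disposal. Recall that $a\in\dcl^\cont(A)$ amounts to $a$ being the unique realisation of $\tp^\cont(a/A)$; this is the characterisation I would aim to establish for an arbitrary $a\in\acl^\cont(A)$.

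Fix $a\in\acl^\cont(A)$. The first ingredient is the general fact recalled at the start of this section: an element algebraic over $A$ is the unique realisation of its Lascar strong type over $A$. The second ingredient is the preceding corollary, which upgrades equality of \emph{affine} types over $A$ to equality of \emph{continuous} Lascar strong types over $A$. Combining the two, I would argue as follows. Let $b$ be any element with $b\equiv^\cont_A a$; we must show $b=a$. Since every affine formula is in particular a continuous formula, $b\equiv^\cont_A a$ gives $b\equiv^\aff_A a$. The preceding corollary then yields an elementary extension and parameters witnessing that $a$ and $b$ have the same continuous type over a model containing $A$, hence the same Lascar strong type over $A$. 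As $a$ is the unique realisation of its Lascar strong type over $A$, we conclude $b=a$.

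Therefore $a$ is the unique realisation of $\tp^\cont(a/A)$, which is precisely $a\in\dcl^\cont(A)$, completing the proof. The entire weight of the argument rests on the preceding corollary; the only points needing care are the trivial observation that a continuous conjugate of $a$ over $A$ is a fortiori an affine conjugate, and the reduction to a complete affine theory so that that corollary applies. I expect no genuine obstacle beyond this bookkeeping: in contrast with the basic-sort coincidence noted earlier—which passed through \autoref{prp:non-forking-cont-vs-aff} and \autoref{lem:indep-antireflexivity} and required affine stability—the present statement uses no stability hypothesis at all, drawing instead on the identification of affine types with Lascar types in convex realisation completions.
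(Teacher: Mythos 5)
Your proof is correct and is precisely the argument the paper intends (the corollary is stated without proof as an immediate consequence of the preceding corollary together with the fact, recalled at the start of the section, that an algebraic element is the unique realisation of its Lascar strong type). The only points you flag — reducing to a complete affine theory and noting that continuous conjugacy over $A$ implies affine conjugacy — are indeed the whole of the required bookkeeping.
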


Of course, the equality $\dcl = \acl$ also holds in every imaginary/interpretable sort of $M$, in the sense of continuous logic, provided that $A$ only contains members of sorts interpretable in $T$, i.e., in the sense of affine logic.

\bibliographystyle{amsalpha}
\bibliography{bib}
\end{document}